\documentclass[12pt,letterpaper, draft]{article}

\usepackage[intlimits,sumlimits]{amsmath}
\usepackage{amssymb}
\usepackage{amsthm}

\newcommand{\beq}{\begin{equation}}
\newcommand{\eeq}{\end{equation}}

\newcommand{\myboldfont}{\mathbf}

\newcommand{\eps}{\varepsilon}
\renewcommand{\epsilon}{\varepsilon}
\renewcommand{\kappa}{\varkappa}
\renewcommand{\ge}{\geqslant}
\renewcommand{\le}{\leqslant}
\renewcommand{\leq}{\leqslant}

\renewcommand{\emptyset}{\varnothing}

\makeatletter   
\let\th\@undefined               
\makeatother

\DeclareMathOperator{\th}{th}

\DeclareMathOperator{\leb}{Leb}

\newcommand{\SL}{\ensuremath{\mathrm{SL}}}

\newcommand{\SO}{\ensuremath{\mathrm{SO}}}

\newcommand{\quot}{\ensuremath{\backslash}}

\renewcommand{\S}{\ensuremath{\myboldfont S}}
\newcommand{\R}{\ensuremath{\myboldfont R}}

\newcommand{\Z}{\ensuremath{\myboldfont Z}}

\newcommand{\Q}{\ensuremath{\myboldfont Q}}

\newcommand{\N}{\ensuremath{\myboldfont N}}
\DeclareMathOperator{\EE}{\ensuremath{\myboldfont E}}
\DeclareMathOperator{\PP}{\ensuremath{\myboldfont P}}

\renewcommand{\d}{\ensuremath{\partial}}

\renewcommand{\myboldfont}{\mathbb}

\usepackage[margin=2.5cm, includefoot]{geometry}

\usepackage{bm}
\usepackage{enumerate, mathrsfs, mathtools}


\newtheorem{theorem}{Theorem}[section]
\newtheorem{maintheorem}[theorem]{Main Theorem}
\newtheorem{lemma}[theorem]{Lemma}

\newtheorem{prop}[theorem]{Proposition}
\newtheorem{cor}[theorem]{Corollary}

\newcounter{theremark}
\setcounter{theremark}{1}

\numberwithin{equation}{section}

\usepackage{graphicx, bbm}

\usepackage[font=small,labelfont=bf]{caption}

\makeatletter

\renewcommand{\section}{\@startsection{section}{0}{0pt}{3.6ex plus 0.2ex minus 0.1ex}{2.3ex plus 0.1ex minus 0.1ex}{\center\normalfont\sc\large}}

\makeatother

\def\NN{{\mathbb N}}
\def\QQ{{\mathbb Q}}
\def\RR{{\mathbb R}}

\def\ZZ{{\mathbb Z}}

\def\veca{{\text{\boldmath$a$}}}

\def\vece{{\text{\boldmath$e$}}}

\def\vecm{{\text{\boldmath$m$}}}

\def\vecq{{\text{\boldmath$q$}}}

\def\vecu{{\text{\boldmath$u$}}}
\def\vecv{{\text{\boldmath$v$}}}

\def\vecx{{\text{\boldmath$x$}}}

\def\vecy{{\text{\boldmath$y$}}}
\def\vecz{{\text{\boldmath$z$}}}

\def\vecbeta{{\text{\boldmath$\beta$}}}

\def\vecxi{{\text{\boldmath$\xi$}}}

\def\bs{\backslash}

\def\vecnull{{\text{\boldmath$0$}}}

\def\scrB{{\mathcal B}}

\def\scrK{{\mathcal K}}
\def\scrL{{\mathcal L}}
\def\scrM{{\mathcal M}}

\def\scrP{{\mathcal P}}

\def\scrX{{\mathcal X}}

\def\curA{{\mathscr A}}
\def\curB{{\mathscr B}}
\def\curC{{\mathscr C}}

\def\e{\mathrm{e}}

\def\sep{\operatorname{sep{}}}
\def\S{\operatorname{S{}}}

\def\SL{\operatorname{SL}}

\def\SO{\operatorname{SO}}

\def\GamG{\Gamma\backslash G}

\def\bs{\backslash}

\def\tmu{{\widetilde\mu}}

\makeatletter
\newcommand{\subalign}[1]{%
  \vcenter{%
    \Let@ \restore@math@cr \default@tag
    \baselineskip\fontdimen10 \scriptfont\tw@
    \advance\baselineskip\fontdimen12 \scriptfont\tw@
    \lineskip\thr@@\fontdimen8 \scriptfont\thr@@
    \lineskiplimit\lineskip
    \ialign{\hfil$\m@th\scriptstyle##$&$\m@th\scriptstyle{}##$\crcr
      #1\crcr
    }%
  }
}
\makeatother

\newcommand{\group}{\ensuremath{G_0}}
\newcommand{\lattice}{\ensuremath{\Gamma_0}}
\newcommand{\affg}{\ensuremath{G}}
\newcommand{\affl}{\ensuremath{\Gamma}}

\newcommand{\affm}{\ensuremath{\mu}}

\newcommand{\bQ}{Q}
\newcommand{\bP}{P}
\newcommand{\1}{\mathbbm{1}}

\def \toweak {\,\,\buildrel{\rm w}\over\longrightarrow\,\,}

\def \todist {\,\,\buildrel{\rm d}\over\longrightarrow\,\,}

\begin{document}
\title{Spherical averages in the space of marked lattices}
\author{Jens Marklof\thanks{School of Mathematics, University of Bristol, Bristol BS8 1TW, U.K.} \and Ilya Vinogradov\thanks{Department of Mathematics, Princeton University, Princeton NJ 08544, USA}}

\date{\today}

\maketitle

\begin{abstract}
A marked lattice is a $d$-dimensional Euclidean lattice, where each lattice point is assigned a mark via a given random field on ${\mathbb Z}^d$. We prove that, if the field is strongly mixing with a faster-than-logarithmic rate, then for {\em every} given lattice and {\em almost every} marking, large spheres become equidistributed in the space of marked lattices.  A key aspect of our study is that the space of marked lattices is not a homogeneous space, but rather a non-trivial fiber bundle over such a space. As an application, we prove that the free path length in a crystal with random defects has a limiting distribution in the Boltzmann-Grad limit. 
\end{abstract}

\section{Introduction}

Consider a Lie group $G$, a non-compact one-parameter subgroup $\Phi^\RR$ and a compact subgroup $K$. Let $\lambda$ be a probability measure on $K$ that is absolutely continuous with respect to Haar measure on $K$. Given a measure-preserving action $G\times X\to X$, $(g,x)\mapsto xg$ on a probability space $(X,\curA,\mu)$, it is natural to ask under which conditions the ``spherical'' average $P_t$ defined by $P_t f:=\int_K f(x_0 k \Phi^t) d\lambda(k)$ converges weakly to $\mu$, or any other probability measure. In general the best one can hope for is convergence for $\mu$-almost all $x_0$. Proofs typically require an additional average over $\Phi^t$, and may be viewed as generalizations of the classic Wiener ergodic theorem; see Nevo's survey \cite{nevo_pointwise_2006} and references therein. If the space $X$ is homogeneous, then the weak convergence of the spherical average $P_t$ can be proved for all $x_0$, with a complete classification of all limit measures, by means of measure rigidity techniques that are based on Ratner's measure classification theorem for subgroups generated by unipotent elements \cite{ratner_raghunathans_1991}. There is now a large body of literature on this topic, see for instance  \cite{shah_limit_1996, Eskin1998,Eskin2005,marklof_pair_correlation_2003,marklof_strombergsson_free_path_length_2010,marklof_strombergsson_free_path_length_2014}.
In some settings, spherical equidistribution may also be deduced directly from the mixing property of $\Phi^\RR$ \cite{eskin_mixing_1993}.  The first example of spherical equidistribution in the non-homogeneous setting for all (and not just almost all) $x_0$ is given in \cite{eskin_marklof_morris_2006}, where the analogue of Ratner's theorem is proved for the moduli space of branched covers of Veech surfaces, which is a fiber bundle over a homogeneous space. A major advance in this direction is the recent work by Eskin and Mirzakhani \cite{eskin_invariant_2013} and Eskin, Mirzakhani  and Mohammadi \cite{eskin_isolation_2013}, who prove a Ratner-like classification of measures in the moduli space of flat surfaces that are invariant under the upper triangular subgroup of $\SL(2,\R)$. This is used to prove convergence of spherical averages in that moduli space, with an additional $t$ average as above, which yields an averaged counting asymptotics for periodic trajectories in general rational billiards.

The goal of the present study is to construct a natural example of a non-homogeneous space (the space of marked Euclidean lattices), which is a fiber bundle over a homogeneous space (the space of Euclidean lattices), and to prove spherical equidistribution for \emph{every} point in the base and \emph{almost every} point in the fiber. 
Our findings complement a theorem of Brettschneider \cite[Theorem~4.7]{brettschneider_uniform_2011}, who proves uniform convergence of Birkhoff averages for fiber bundles with uniquely ergodic base under technical assumptions on the test function and fiber transformation. 

This paper is organized as follows. We introduce the space of lattices in Section \ref{sec:lattice}, then the space of marked lattices in Section \ref{sec:marked}, where the marking is produced by a random field on $\ZZ^d$. The main results of this study, limit theorems for spherical averages in the space of marked lattices, are stated and proved in Section \ref{sec:main0} and \ref{sec:main}. The former deals with convergence on average over the field, the latter with a fixed realization of the random field. Section \ref{sec:defect} applies these results to the setting of defect lattices, where lattice points are either randomly removed, or shifted from their equilibrium position.
Section \ref{sec:lorentz} explains how these findings can be used to calculate the limit distribution for the free paths lengths in the Boltzmann-Grad limit of a Lorentz gas for such scatterer configurations. 

{\bf Acknowledgements.} We thank Alex Eskin, Alex Furman, Amos Nevo, and Andreas Str\"ombergsson for stimulating discussions, and MSRI for its hospitality during the programme ``Geometric and Arithmetic Aspects of Homogeneous Dynamics.'' The research leading to these results has received funding from the European Research Council under the European Union's Seventh Framework Programme (FP/2007-2013) / ERC Grant Agreement n. 291147.

\section{Spherical averages in the space of lattices\label{sec:lattice}}

Let $\group=\SL(d,\R)$ and let $\lattice=\SL(d,\Z)$. We represent elements in $\R^d$ as row vectors, and define a natural action of $\group$ on $\R^d$ by right matrix multiplication. The map
\begin{equation}
\lattice M \mapsto \Z^d M
\end{equation}
gives a one-to-one correspondence between the homogeneous space $\lattice\quot\group$ and the space of Euclidean lattices in $\R^d$ of covolume one. The Haar measure $\mu_0$ on $G_0$ is normalized, so that it projects to a probability measure on $\Gamma_0\bs G_0$ which will also be denoted by $\mu_0$.

Let $\affg = \group\ltimes \R^d$ be the semidirect product with multiplication law
\begin{equation}
(M,\vecxi)(M',\vecxi')=(MM',\vecxi M'+\vecxi')
\end{equation}
where $\vecxi,\vecxi'$ are viewed as row vectors.
The group $\affg$ is a bundle over \group\ with fiber $\R^d$.  The subgroup $\affl = \lattice \ltimes \Z^d$ is a lattice in $G$. The Haar measure on $G$ is $\mu=\mu_0\times\leb_{\RR^d}$. It induces a probability measure on $\Gamma\bs G$ which will also be denoted by $\mu$.
The groups \group\ and \affg\ act on $\R^d$ by linear and affine transformations, respectively,  which are given by 
\begin{align}
 \R^d & \curvearrowleft \group & \R^d & \curvearrowleft \affg\\
 (\vecv,g)& \mapsto \vecv g & (\vecv,(M,\vecxi))& \mapsto \vecv M +\vecxi =: \vecv(M,\vecxi),
\intertext{and}
\Z^d & \curvearrowleft \lattice & \Z^d & \curvearrowleft \affl\\
 (\vecv,g)& \mapsto \vecv g & (\vecv ,(M,\vecxi))& \mapsto \vecv M +\vecxi = \vecv (M,\vecxi),\label{eq:discrete_action_eq}
\end{align}
where concatenation denotes matrix multiplication. We embed $\group \hookrightarrow \affg$ by $M\mapsto(M,\vecnull)$, and identify $G_0$ with its image under this embedding.

As in the linear case, the map
\begin{equation}
\affl g \mapsto \Z^d g
\end{equation}
gives a one-to-one correspondence between the homogeneous space $\affl\quot\affg$ and the space of affine Euclidean lattices in $\R^d$ of covolume one.

We will need other subgroups of \affg\ in addition to \group. For $\vecxi\in\R^d$, put
\begin{align}
 \affg_\vecxi=
 \begin{cases}
  \affg & \text{ if }\vecxi \in \R^d\setminus \Q^d,\\
  (\1,\vecxi) \group (\1,\vecxi)^{-1} & \text{ if } \vecxi\in\Q^d.
 \end{cases}
\end{align}
The subgroup $\affl_\vecxi = \affl\cap \affg_\vecxi$ is a lattice in $\affg_\vecxi$. We denote by $\mu_{\vecxi}$ be the Haar measure on $\affg_\vecxi$, normalized so that a fundamental domain of the $\affl_\vecxi$-action in $G_\vecxi$ has measure 1. We denote the induced probability measure on $\affl_\vecxi\quot \affg_\vecxi$ also by $\mu_{\vecxi}$. When $\vecxi\notin \Q^d$, we put $\affl_\vecxi = \affl$ and $\mu_{\vecxi} = \mu$. Thus, when $\vecxi\in\Q^d$, $\mu_{\vecxi}$ can be identified with a singular measure on $\affl\quot \affg$ supported on the closed subspace $\affl\quot\affl\affg_\vecxi\simeq \affl_\vecxi\quot \affg_\vecxi$ of $X$. Define the translate 
\begin{equation}
X_\vecxi:=\affl\quot\affl\affg_\vecxi (\1,\vecxi),
\end{equation}
again a closed subspace of $X$, which we equip with the subspace topology. Note that $X_\vecxi=\overline{\affl\quot\affl (\1,\vecxi) \group}$. The measure $\tmu_\vecxi$ on $X_\vecxi$ defined as the translate of $\affm_{\vecxi}$,
\begin{equation}
\tmu_\vecxi A = \mu_\vecxi(A(\1,\vecxi)^{-1})
\end{equation}
for any Borel set $A\subset X_\vecxi$.
For $t\in\R$ and $\vecu \in U\subset \R^{d-1}$, with $U$ open and bounded, define the matrices 
\begin{align}
\Phi^t &=\begin{pmatrix}e^{-(d-1)t} & \vecnull\\\vecnull^T &\1_{d-1}e^t\end{pmatrix}, & R(\vecu) = \exp \begin{pmatrix}0 &\vecu \\-\vecu ^T & \1_{d-1} 0\end{pmatrix}.
\end{align}
The map $U\to \S_1^{d-1}$, $\vecu\mapsto \vece_1 R(\vecu)^{-1}$, where $\vece_1$ is the first standard basis vector, is a diffeomorphism onto its image if $U$ is sufficiently small; cf.\ Remark 5.5 in \cite{marklof_strombergsson_free_path_length_2010}. 
More generally, we can take any smooth map $E\colon U\to \SO(d)$ such that 
\begin{equation}\label{Etilde}
\widetilde E: U\to \widetilde E(U)\subset \S_1^{d-1}, \vecu \mapsto \vece_1 E(\vecu)^{-1},
\end{equation}
is invertible and the inverse is uniformly Lipschitz. We will furthermore assume in the following that the closure of $\widetilde E(U)$ is contained in the hemisphere $\{ \vecv\in \S_1^{d-1} : \vece_1\cdot\vecv>0\}$, and that $\leb(\d U) = 0$.

For a given absolutely continuous probability measure $\lambda$ on $U$, $t\ge 0$ and $\vecxi\in\R^d$, $M \in\group$, let $\bP_t=\bP_t^{(\lambda,M,\vecxi)}$ be the Borel probability measure on $X_\vecxi$ defined by 
\begin{align}\label{eq:measure_def}
\bP_t f = \int f d\bP_t = \int_{\vecu \in U} f(\affl (\1,\vecxi)M E(\vecu) \Phi^t) \lambda(d\vecu) 
\end{align}
for any bounded continuous $f\colon X_\vecxi \to \R$. Note that the restriction to maps $E\colon U\to\SO(d)$ is purely for technical convenience. There is no loss of generality, since $M$ is arbitrary and the maps $U\mapsto\S_1^{d-1}$, $\vecu\mapsto \vece_1 E(\vecu)^{-1} M^{-1}$ cover the sphere for finitely many choices of $M\in\SO(d)$; cf.\ \cite{marklof_strombergsson_free_path_length_2010}.

\begin{theorem}[{\cite[Sec.~5]{marklof_strombergsson_free_path_length_2010}}]\label{th:ms_full}
For $t\to\infty$,
\begin{equation}
P_t \toweak \tmu_\vecxi.
\end{equation}
\end{theorem}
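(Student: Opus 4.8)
The plan is to establish equidistribution of the spherical average $P_t$ toward $\tmu_\vecxi$ by reducing it to equidistribution of expanding horospheres in the homogeneous space $\affl_\vecxi\bs\affg_\vecxi$. First I would reduce to the case $M=\1$ and $\vecxi=\vecnull$ (i.e. to $X=\affl\bs\affg$) when $\vecxi\notin\Q^d$, and to the corresponding translate when $\vecxi\in\Q^d$: since $E(\vecu)\Phi^t$ is conjugated and translated in a fixed way, the general statement follows by transporting the test function. It then suffices to show that, for $f$ bounded continuous on $X_\vecxi$,
\begin{equation}
\int_U f\bigl(\affl(\1,\vecxi)ME(\vecu)\Phi^t\bigr)\,\lambda(d\vecu)\longrightarrow \int f\,d\tmu_\vecxi
\end{equation}
as $t\to\infty$.

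The key step is to recognize that the pieces $R(\vecu)\Phi^t$, and more generally $E(\vecu)\Phi^t$, sweep out an expanding piece of the horospherical subgroup associated to the diagonal flow $\Phi^t$: writing $\Phi^t$ in the block form above, conjugation by $\Phi^{-t}$ expands the lower-left $(d-1)\times 1$ block and the relevant unipotent directions in $\R^d$, so that $E(\vecu)\Phi^t = \Phi^t\, n(\vecu,t)$ for suitable horospherical elements $n(\vecu,t)$ whose parameters range over a region of $\R^{d-1}$ (and of the fiber $\R^d$ when $\vecxi\notin\Q^d$) of volume growing like $e^{d(d-1)t}$ or so. Here is where the distinction $\vecxi\in\Q^d$ versus $\vecxi\notin\Q^d$ enters: for $\vecxi\notin\Q^d$ the orbit of $(\1,\vecxi)\group$ is dense in all of $X$ and the relevant horosphere lives in the full group $\affg$, so one equidistributes toward $\mu=\mu_0\times\leb_{\RR^d}$ transported by $(\1,\vecxi)$; for $\vecxi\in\Q^d$ the orbit closure is the lower-dimensional $X_\vecxi$ and the horosphere lives in $\affg_\vecxi$, equidistributing toward $\tmu_\vecxi$. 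In both cases one invokes the equidistribution of translated expanding horospheres in the finite-volume homogeneous space $\affl_\vecxi\bs\affg_\vecxi$, together with the smoothness/Lipschitz hypotheses on $E$ and absolute continuity of $\lambda$ to replace the pushed-forward measure on the horosphere by Lebesgue measure and to control the boundary (using $\leb(\d U)=0$).

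The main obstacle is the non-compactness of the homogeneous space: equidistribution of expanding horospheres toward the Haar measure requires controlling the escape of mass into the cusp, and the parametrization by $E(\vecu)$ (rather than the full horospherical group) means one must show that the pushforward of $\lambda$ under $\vecu\mapsto n(\vecu,t)$, restricted to the expanding directions, does not concentrate on any proper subvariety or escape — this is precisely where the non-concentration of $\lambda$ and the Lipschitz invertibility of $\widetilde E$ are used, in the spirit of the argument in \cite{marklof_strombergsson_free_path_length_2010}. Once equidistribution toward the Haar measure on the horosphere is in hand, one approximates $f$ by a uniformly continuous function with compact support, applies the horocycle equidistribution (or mixing of $\Phi^t$) on $\affl_\vecxi\bs\affg_\vecxi$, and unwinds the conjugation and translation to identify the limit as $\tmu_\vecxi$. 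Since this is exactly Theorem~\ref{th:ms_full} as proved in \cite[Sec.~5]{marklof_strombergsson_free_path_length_2010}, the proof here reduces to citing and, if desired, summarizing that argument.
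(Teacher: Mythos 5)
The paper itself offers no proof of Theorem~\ref{th:ms_full}: it is imported wholesale from \cite[Sec.~5]{marklof_strombergsson_free_path_length_2010}, so your closing move (``cite and summarize that argument'') coincides with the paper's treatment at the level of what is actually written. The problem is the mechanism you sketch along the way, which both misdescribes the cited proof and would not work as an independent argument. The identity $E(\vecu)\Phi^t=\Phi^t\,n(\vecu,t)$ with $n(\vecu,t)$ horospherical is false: $\Phi^{-t}E(\vecu)\Phi^t$ has components along the contracting and neutral directions as well, and, more importantly, $E(\vecu)\in\group$ produces no displacement whatsoever in the $\R^d$-fiber. The expanding horospherical subgroup of $\Phi^t$ inside $\affg=\group\ltimes\R^d$ has dimension $2(d-1)$ (the first-row unipotents of $\group$ plus $d-1$ translational directions), whereas $E(U)$ supplies only $d-1$ directions. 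Hence for $\vecxi\notin\Q^d$, where the limit measure must be the Haar measure on all of $X$, the translated sphere is a positive-codimension piece of the relevant horosphere, and mixing/horospherical-equidistribution arguments in the style of Eskin--McMullen do not apply; they also cannot see the $\Q^d$ versus non-$\Q^d$ dichotomy for a \emph{fixed} base point, which is the whole content of the statement.

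What the cited Section~5 of \cite{marklof_strombergsson_free_path_length_2010} actually uses is measure rigidity: Ratner's classification in the form of Shah's theorem on limits of expanding translates of orbits (this is exactly the distinction drawn in the introduction of the present paper between the Ratner-based results and the cases where mixing suffices). The absolute continuity of $\lambda$ and the Lipschitz invertibility of $\widetilde E$ feed into that theorem, and the rationality or irrationality of $\vecxi$ enters when identifying which closed invariant subspace ($X_\vecxi$ versus $X$) supports the limit, ruling out intermediate invariant measures. For $\vecxi\in\Q^d$ the space $X_\vecxi$ is a finite-volume homogeneous space of a conjugate of $\group$ and a mixing-based spherical argument could indeed be made to work there; it is precisely the irrational case where your proposed route has a genuine gap. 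So: as a citation your proof is fine and agrees with the paper; as a sketch of the underlying argument it attributes the theorem to the wrong mechanism.
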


Recall the above weak convergence means that for every bounded continuous $f\colon X_\vecxi \to \R$, $\lim_{t\to\infty} P_t f = \tmu_\vecxi f$.

We will now show how the space of lattices can be viewed as a subspace of the space of point processes in $\RR^d$. The extension of this to marked point processes will be a key element in this paper.

Let $\scrM(\RR^d)$ be the space of locally finite Borel measures on $\RR^n$, equipped with the vague topology. The vague topology is the smallest topology such that the function 
\begin{equation}\widehat f:\scrM(\RR^d)\to\RR,\qquad \mu\mapsto \mu f\end{equation}
is continuous for every $f\in C_c(\RR^d)$ (the space of continuous functions $\RR^d\to\RR$ with compact support). The space $\scrM(\RR^d)$ is Polish 
in this topology \cite[Theorem A 2.3]{Kallenberg02}.
We embed the space of affine lattices in $\scrM(\RR^d)$ by the map
\begin{equation}
\iota\colon X\to \scrM(\RR^d),\qquad x \mapsto \sum_{\vecy\in\ZZ^d x} \delta_\vecy  .
\end{equation}
For technical reasons (which will become clear in Corollary \ref{cor:ms_special}) we will need to treat the space of lattices $X_0$ slightly differently; define
\begin{equation} \label{iota0}
\iota_0\colon X_0\to \scrM(\RR^d),\qquad x \mapsto \sum_{\vecy\in\ZZ^d x\setminus\{\vecnull\}} \delta_\vecy  .
\end{equation}

\begin{prop}\label{topprop1}
The maps $\iota$ and $\iota_0$ are topological embeddings.
\end{prop}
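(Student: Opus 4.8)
The plan is to show that $\iota$ (and similarly $\iota_0$) is a continuous injection from $X$ (resp. $X_0$) to $\scrM(\RR^d)$ which is a homeomorphism onto its image. Since $X$ is a nice space (a homogeneous space of a Lie group, hence Polish, in fact locally compact and $\sigma$-compact) and $\scrM(\RR^d)$ is Polish and Hausdorff, the key points are (i) injectivity, (ii) continuity, and (iii) that the inverse on the image is continuous. For (iii), the most economical route is to establish that $\iota$ is proper, i.e., preimages of compact sets are compact; a continuous proper injection into a Hausdorff space is automatically a homeomorphism onto its (closed) image.

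Let me check the steps. *Injectivity.* Two affine lattices $\ZZ^d x$ and $\ZZ^d x'$ determine the same locally finite measure $\sum_\vecy \delta_\vecy$ if and only if they coincide as subsets of $\RR^d$ (with multiplicity one each, which holds since an affine lattice has no repeated points), hence if and only if $x = x'$ in $X = \affl\bs\affg$; for $\iota_0$ one uses that an (unmarked, covolume-one) lattice is determined by its nonzero points together with the constraint of being a lattice. So $\iota$ and $\iota_0$ are injective. *Continuity.* By definition of the vague topology it suffices to show $x\mapsto \iota(x)f = \sum_{\vecy\in\ZZ^d x} f(\vecy)$ is continuous for each $f\in C_c(\RR^d)$. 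Fix such an $f$ with $\supp f \subset B_R(\vecnull)$. For $x$ in a small neighborhood of a given $x_0$, only finitely many lattice points of $\ZZ^d x$ lie in $B_R(\vecnull)$, uniformly in the neighborhood (this is exactly the statement that the number of lattice points in a fixed ball is locally bounded, which follows from continuity of $g\mapsto \ZZ^d g$ as a map to the Chabauty/vague topology, or directly from the fact that the shortest vector and the covolume vary continuously), and each such point depends continuously on $x$; hence the finite sum $\sum f(\vecy)$ is continuous. The same argument works for $\iota_0$ after deleting the origin.

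*Properness.* This is the step I expect to be the main obstacle, and it is where the difference between $\iota$ and $\iota_0$ matters — this is presumably why Corollary~\ref{cor:ms_special} forces the distinction. Let $C\subset \iota(X)$ be relatively compact in $\scrM(\RR^d)$; I want $\iota^{-1}(C)$ relatively compact in $X$. By Mahler's compactness criterion, a set of affine lattices is relatively compact in $X$ iff there is a uniform lower bound on the length of the shortest \emph{nonzero} vector of the associated \emph{linear} lattice and a uniform upper bound on... well, covolume is fixed at one, so only the no-short-vectors condition is needed, plus control that the affine shift stays bounded modulo the lattice (automatic, as the shift lives on the compact torus). The point is to translate relative compactness in the vague topology — which by \cite[Theorem A2.3 / Lemma 4.5]{Kallenberg02} is equivalent to $\sup_{\mu\in C}\mu(B_R)<\infty$ for every $R$, together with no escape of mass, i.e. a uniform bound on the number of lattice points in each ball $B_R(\vecnull)$ — into Mahler's criterion. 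If $\ZZ^d x$ had two independent very short vectors then it would have many points in a fixed ball, contradicting the uniform bound; but a single short vector alone need not create many points in $B_R(\vecnull)$ if the lattice is "long and thin," so one must use that the short direction, iterated, still produces $\asymp$ (short length)$^{-1}$ points in $B_R(\vecnull)$ — and crucially that for the linear lattice $X_0$ one counts points \emph{including} a guaranteed cluster near the origin along the short direction. For the affine lattice $X$ a short linear vector likewise forces a full arithmetic progression of affine lattice points through any given affine point to pile up in $B_R(\vecnull)$; this is why $\iota$ needs all points (keeping the origin) while $\iota_0$ needs all nonzero points of the linear lattice — in both cases one recovers, from a vague-compact family of measures, a uniform lower bound on the shortest vector, hence Mahler compactness. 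I would carry this out via the contrapositive: take $x_n\in\iota^{-1}(C)$ with $\ZZ^d x_n$ developing arbitrarily short vectors; exhibit $R$ and a subsequence with $\iota(x_n)(B_R)\to\infty$, contradicting relative compactness of $C$. Finally, a continuous, injective, proper map into a Hausdorff space is a closed embedding, so $\iota$ and $\iota_0$ are topological embeddings. $\qed$
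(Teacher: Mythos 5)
Your injectivity and continuity steps are fine and essentially match the paper, but the properness step is a genuine gap: for the affine map $\iota$ the claim you rely on is false. Take $d=2$ and $x_t=\Gamma\,(\diag(e^{-t},e^{t}),(0,\tfrac12 e^{t}))$, i.e.\ the affine lattice $\ZZ^2\diag(e^{-t},e^t)+(0,\tfrac12 e^t)$. Every point of this lattice has second coordinate of absolute value at least $\tfrac12 e^t$, so for each $f\in C_c(\RR^2)$ one has $\iota(x_t)f=0$ for $t$ large; hence $\iota(x_t)$ converges vaguely to the zero measure, while $x_t$ leaves every compact subset of $X$ (the linear lattice has shortest vector $e^{-t}\to 0$). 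So the set $C=\{\iota(x_t)\}$ is relatively compact in $\scrM(\RR^d)$ but $\iota^{-1}(C)$ is not relatively compact in $X$: $\iota$ is not proper and its image is not even closed. The specific heuristic you give --- that a short linear vector forces an arithmetic progression of affine lattice points to pile up in $B_R(\vecnull)$ --- is exactly where this breaks down: the progression accumulates near some affine lattice point, which may lie far outside $B_R(\vecnull)$ when the lattice is long and thin and the translate pushes all points away from the origin. (For $\iota_0$, where the lattice passes through $\vecnull$, your Mahler argument does work, and the map there is indeed proper; but the affine case is the delicate one, and your stated reason for the $\iota$ vs.\ $\iota_0$ distinction is also off --- the origin is deleted in $\iota_0$ because of the deterministic atom at $\vecnull$ in Corollary \ref{cor:ms_special}, not because of compactness issues.)

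Since properness fails, "continuous proper injection $\Rightarrow$ embedding" cannot be the route for $\iota$; an embedding onto a non-closed image must be proved by controlling the inverse only along sequences whose vague limit is again in the image. That is what the paper does: assuming $\iota(x_j)f\to\iota(x)f$ for all $f\in C_c(\RR^d)$, one tests against small bump functions centered at the points $\vece_k g$ for $k=0,1,\dots,d$ (with $\vece_0=\vecnull$, so the center is the translate $\vecxi$ itself) of the limit lattice, extracts points $\vecy_k^{(j)}\in\ZZ^d x_j$ converging to these, and uses unimodularity to see that $\vecy_1^{(j)}-\vecy_0^{(j)},\dots,\vecy_d^{(j)}-\vecy_0^{(j)}$ form a basis for large $j$, yielding group elements $g_j\to g$ and hence $x_j\to x$. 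Your proposal is missing this reconstruction step, and without it (or some substitute using that the limit measure is itself an affine lattice measure) the argument for $\iota$ does not go through.
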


\begin{proof} 
To establish the continuity of $\iota$, we need to show that, for every $f\in C_c(\RR^d)$, $x_j\to x$ in $X$ implies $\iota(x_j)f \to \iota(x) f$. By the $\Gamma$-equivariance of $\iota$, it is sufficient to show that $g_j\to g$ in $G$ implies
\begin{equation} \sum_{\vecy\in\ZZ^d g_j} f(\vecy) \to  \sum_{\vecy\in\ZZ^d g} f(\vecy).\end{equation}
Let $A$ be the compact support of $f$. Since $g_j\to g$, the closure of $A'=\cup_j (Ag_j^{-1})$ is compact. Hence $\ZZ^d\cap A'$ is finite. For $\veca\in\ZZ^d\setminus A'$ we have $f(\veca g_j)=f(\veca g)=0$, and for the finitely many $\veca\in\ZZ^d\cap A'$ we have $f(\veca g_j)\to f(\veca g)$.

The map $\iota$ is injective, since the lattice $\ZZ^d x$ uniquely determines $x\in\GamG$. Let $\tilde\iota\colon X \to \iota(X)$, $x\mapsto\iota(x)$.
To establish the continuity of $\tilde\iota^{-1}$, we need to show that $\iota(x_j)f \to \iota(x) f$ for every $f\in C_c(\RR^d)$ implies $x_j\to x$ in $X$. Fix $g=(M,\vecxi)\in\Gamma x$.
Then $\vece_1 M,\ldots,\vece_n M$ forms a basis of $\ZZ^d M$, where $\vece_k$ are the standards basis vectors of $\ZZ^d$. Set $\vece_0=\vecnull$ and define for $k=0,1,\ldots,n$,
\begin{equation}
f_{k,\delta}(\vecy)=
\begin{cases} 
1 & \text{if $\| \vecy - \vece_k g\| < \frac{\delta}{2}$}\\
2- \frac{2}{\delta} \| \vecy - \vece_k g\| & \text{if $\frac{\delta}{2} \le \| \vecy - \vece_k g\| <  \delta$}\\
0 & \text{if $\| \vecy - \vece_k g\| \ge \delta$.}
\end{cases}
\end{equation}
Note that $f_{k,\delta} \in C_c(\RR^d)$. By the discreteness of $\ZZ^d g$, there is $\delta_0>0$ such that for all $\delta\in(0,\delta_0]$, all $k=0,1,\ldots,n$,
\begin{equation} \iota(x) f_{k,\delta} = \sum_{\vecy\in\ZZ^d g} f_{k,\delta}(\vecy) = f_{k,\delta}(\vece_k g) = 1 .\end{equation}
Since by assumption $\iota(x_j)f_{k,\delta} \to \iota(x) f_{k,\delta}=1$, given $\delta>0$, there is $j_0\in\NN$ such that for every $j\ge j_0$ and for every $k$, there is at least one element in $\ZZ^d x_j$ within distance $\delta$ to $\vece_k g$. Call this element $\vecy_k^{(j)}$. Then 
\begin{equation}\label{222}
\vecy_k^{(j)}\to \vece_k g= \vece_k M +\vecxi \quad \text{for every $k=0,1,\ldots,d$,} 
\end{equation}
and therefore
\begin{equation}\label{2222}
\vecy_k^{(j)}-\vecy_0^{(j)} \to \vece_k g -\vece_0 g = \vece_k M \quad \text{for every $k=1,\ldots,d$.} 
\end{equation}
Because of this and the fact that the lattices $\ZZ^d x_j-\vecy_0^{(j)}$ and $\ZZ^d x-\vecxi$ both have covolume one, 
the vectors $\vecy_1^{(j)}-\vecy_0^{(j)},\ldots,\vecy_n^{(j)}-\vecy_0^{(j)}$ form a basis of $\ZZ^d x_j-\vecy_0^{(j)}$ (for all sufficiently large $j$). Then
\begin{equation} M_j := \begin{pmatrix} \vecy_1^{(j)}-\vecy_0^{(j)}\\ \vdots \\ \vecy_n^{(j)}-\vecy_0^{(j)} \end{pmatrix} \in G_0,\qquad \vecxi_j:=\vecy_0^{(j)}  .\end{equation}
Now \eqref{222} implies $g_j=(M_j,\vecxi_j)\to g=(M,\vecxi)$ and thus $x_j\to x$.

The proof for $\iota_0$ is almost identical.
\end{proof}

Every random element $\zeta$ in $X$ defines a point process $\Theta=\iota(\zeta)$ in $\scrM(\RR^d)$. Let $\zeta_t$ be the random element distributed according to $P_t$, and $\zeta$ according to $\tmu_\vecxi$, with $\vecxi\notin\ZZ^d$. Theorem \ref{th:ms_full} can then be rephrased as $\zeta_t \todist \zeta$. In view of Proposition \ref{topprop1} and the continuous mapping theorem \cite[Theorem 4.27]{Kallenberg02}, this is equivalent to the following convergence in distribution for the point processes $\Theta_t=\iota(\zeta_t)$ and $\Theta=\iota(\zeta)$ in the case $\vecxi\notin\ZZ^d$, $\Theta_{0,t}=\iota_0(\zeta_t)$ and $\Theta_0=\iota_0(\zeta)$ for $\vecxi\in\ZZ^d$. To simplify notation we suppress the dependence on $\vecxi$; $\Theta$ depends on the choice of $\vecxi\in\RR^d\setminus\ZZ^d$.

\begin{theorem}\label{thm:ppoint}
For $t\to\infty$,
\begin{equation}
\Theta_t \todist \Theta \quad (\vecxi\notin\ZZ^d), \qquad \Theta_{0,t} \todist \Theta_0 \quad (\vecxi\in\ZZ^d).
\end{equation}
\end{theorem}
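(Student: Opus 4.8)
The plan is to deduce Theorem \ref{thm:ppoint} from Theorem \ref{th:ms_full} purely by functoriality, using Proposition \ref{topprop1} together with the continuous mapping theorem for convergence in distribution. First I would recall the precise dictionary: a random element $\zeta_t$ of the Polish space $X_\vecxi$ with law $\bP_t$ pushes forward under the continuous map $\iota$ (respectively $\iota_0$ on $X_0$) to a random element $\Theta_t := \iota(\zeta_t)$ of $\scrM(\RR^d)$, and Theorem \ref{th:ms_full} asserts $\bP_t \toweak \tmu_\vecxi$, i.e.\ $\zeta_t \todist \zeta$ where $\zeta$ has law $\tmu_\vecxi$. Since $\iota$ is continuous (Proposition \ref{topprop1}), the continuous mapping theorem \cite[Theorem 4.27]{Kallenberg02} immediately gives $\iota(\zeta_t)\todist \iota(\zeta)$ in $\scrM(\RR^d)$, which is exactly $\Theta_t\todist\Theta$. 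The same argument with $\iota_0$ in place of $\iota$ handles the case $\vecxi\in\ZZ^d$, giving $\Theta_{0,t}\todist\Theta_0$.

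The one genuinely substantive point — and the step I would flag as the main obstacle — is the distinction between the two cases $\vecxi\notin\ZZ^d$ and $\vecxi\in\ZZ^d$, and in particular why we must excise the origin via $\iota_0$ on $X_0$ but not on the generic translate $X_\vecxi$. When $\vecxi\in\ZZ^d$ the space $X_\vecxi$ coincides with $X_0$ (the space of honest, origin-containing lattices), and every point of $X_0$ carries a particle at $\vecnull$; under $\iota$ this contributes a deterministic atom $\delta_\vecnull$ common to all $\Theta_t$, which is harmless for convergence but is precisely the feature that would later break the connection with marked \emph{point processes} (a point process should not have a built-in deterministic point). Using $\iota_0$, which deletes the origin, is legitimate because $\iota_0$ is still a topological embedding of $X_0$ (the last line of the proof of Proposition \ref{topprop1}), so the continuous mapping theorem applies verbatim. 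I would make explicit that in the case $\vecxi\notin\ZZ^d$ the lattice $\ZZ^d x$ almost surely (indeed always) misses the origin, so there is no atom to worry about and $\iota$ itself is the right map; this is why the statement is phrased as two separate convergences.

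Finally I would remark on the measurability/well-definedness bookkeeping needed to make the phrase ``random element distributed according to $\bP_t$'' rigorous: $X_\vecxi$ is a closed subspace of the Polish space $X$, hence itself Polish, and $\bP_t$, $\tmu_\vecxi$ are Borel probability measures on it, so there is no obstruction to realizing them as laws of random elements on a common probability space; convergence in distribution is then intrinsic to the topology of $X_\vecxi$ and is unaffected by viewing $X_\vecxi\subset X$. Pushing forward along the continuous injections $\iota,\iota_0$ preserves this, and since these maps are moreover homeomorphisms onto their images, no information is lost — the rephrasing is an equivalence, not merely an implication. With these points in place the proof is a two-line invocation of \cite[Theorem 4.27]{Kallenberg02}; the real content has already been done in Theorem \ref{th:ms_full} and Proposition \ref{topprop1}.
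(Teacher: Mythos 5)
Your proposal is correct and is essentially identical to the paper's argument: the paper also obtains Theorem \ref{thm:ppoint} directly from Theorem \ref{th:ms_full} by pushing forward under the embeddings $\iota$, $\iota_0$ of Proposition \ref{topprop1} and invoking the continuous mapping theorem \cite[Theorem 4.27]{Kallenberg02}. Your added remarks on the role of $\iota_0$ (excising the origin when $\vecxi\in\ZZ^d$) match the paper's own explanation, which attributes the excision to the later boundary condition $\Theta\,\d A_i=0$ in Corollary \ref{cor:ms_special} rather than to the convergence statement itself.
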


We now turn to the finite-dimensional distribution of the above point processes, cf.~{\cite[Sec.~5]{marklof_strombergsson_free_path_length_2010}}.

\begin{cor}\label{cor:ms_special}
Let $n\in\NN$ and $A_1,\ldots,A_n\subset\RR^d$ bounded Borel sets with $\leb (\d A_i) = 0$ for all $i$. Then, for $t\to\infty$,
\begin{equation}
(\Theta_t A_1,\ldots,\Theta_t A_n) \todist (\Theta A_1,\ldots,\Theta A_n)  \quad (\vecxi\notin\ZZ^d),
\end{equation}
\begin{equation}
(\Theta_{0,t} A_1,\ldots,\Theta_{0,t} A_n) \todist (\Theta_0 A_1,\ldots,\Theta_0 A_n) \quad (\vecxi\in\ZZ^d).
\end{equation}
\end{cor}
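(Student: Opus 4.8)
The plan is to deduce Corollary~\ref{cor:ms_special} from Theorem~\ref{thm:ppoint} via the continuous mapping theorem, after checking that the evaluation maps $\mu\mapsto(\mu A_1,\dots,\mu A_n)$ are almost surely continuity points of the relevant limiting laws. First I would recall the standard fact from the theory of random measures that, for a fixed bounded Borel set $A$ with $\mu(\d A)=0$, the map $\widehat{A}\colon\scrM(\RR^d)\to\RR$, $\mu\mapsto\mu A$, is continuous at every $\mu\in\scrM(\RR^d)$ with $\mu(\d A)=0$ (see \cite[Theorem~4.27 or the discussion in Ch.~16]{Kallenberg02}); more precisely, vague convergence $\mu_j\tovague\mu$ together with $\mu(\d A)=0$ forces $\mu_j A\to\mu A$. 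Applying this coordinatewise, the map $g=(g_1,\dots,g_n)\colon\scrM(\RR^d)\to\RR^n$, $g(\mu)=(\mu A_1,\dots,\mu A_n)$, is continuous at every $\mu$ for which $\mu\big(\bigcup_{i=1}^n\d A_i\big)=0$.

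Next I would verify that, with probability one, the limiting point process $\Theta$ (resp.\ $\Theta_0$) puts no mass on $\bigcup_i\d A_i$. Since $\Theta=\iota(\zeta)$ with $\zeta$ distributed according to $\tmu_\vecxi$, and each $\d A_i$ is a $\leb$-null set, this follows from the fact that $\tmu_\vecxi$-almost every affine lattice $\ZZ^d x$ meets a given Lebesgue-null set only in the empty set: indeed $\EE[\Theta(\d A_i)] = \int \#(\ZZ^d x\cap \d A_i)\, d\tmu_\vecxi(x)$, and by Siegel's summation formula (or its affine-lattice analogue) this equals $\leb(\d A_i)=0$ when $\vecxi\notin\ZZ^d$; the same computation with $\iota_0$ handles the $\vecxi\in\ZZ^d$ case, noting the single removed point $\vecnull$ only helps. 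Hence $\Theta(\bigcup_i\d A_i)=0$ almost surely, so $g$ is a.s.\ continuous with respect to the law of $\Theta$ (resp.\ $\Theta_0$).

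With these two ingredients, the continuous mapping theorem \cite[Theorem~4.27]{Kallenberg02} applied to $\Theta_t\todist\Theta$ (Theorem~\ref{thm:ppoint}) yields $g(\Theta_t)\todist g(\Theta)$, which is exactly the first displayed convergence; the second follows identically from $\Theta_{0,t}\todist\Theta_0$. I expect the only genuine point requiring care to be the almost-sure non-charging of the boundaries $\d A_i$ by the limit process — i.e.\ the expected-number-of-points computation — since the continuity of $\widehat{A}$ and the invocation of the continuous mapping theorem are routine once that is in place. One should also note that $\Theta_t A_i$ is a well-defined random variable (finite a.s.) because $A_i$ is bounded and each realization of $\Theta_t$ is a locally finite counting measure, so no integrability issue arises on the prelimit side.
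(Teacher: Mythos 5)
Your proposal is correct and follows essentially the same route as the paper: deduce the corollary from Theorem \ref{thm:ppoint}, with the only substantive point being that $\leb(\d A_i)=0$ forces $\Theta\,\d A_i=0$ (resp.\ $\Theta_0\,\d A_i=0$) almost surely via Siegel's formula $\EE\Theta B=\EE\Theta_0 B=\leb B$, which is exactly the ingredient the paper isolates. The only cosmetic difference is that the paper invokes \cite[Theorem 16.16]{Kallenberg02} directly, whereas you unpack its easy direction by hand (continuity of the evaluation maps at measures not charging $\bigcup_i\d A_i$ plus the continuous mapping theorem), which is the same argument.
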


In view of  \cite[Theorem 16.16]{Kallenberg02}, the main ingredient in the derivation of Corollary \ref{cor:ms_special} from Theorem \ref{thm:ppoint} is to show that $\leb (\d A_i) = 0$ implies that $\Theta \d A_i=0$ almost surely, and $\Theta_0 \d A_i=0$ almost surely. This follows from Siegel's integral formula \cite{siegel_mean_1945, veech_siegel_1998}, which says that $\EE\Theta_0 B=\leb (B)$, $\EE \Theta B=\leb (B)$ for every $B\in\curB(\RR^d)$ (note that this identity is straightforward for $\vecxi\notin\QQ^d$, since it follows directly from the translation invariance of $\Theta$). Note that $\Theta \d A_i=0$ fails for $\vecxi\in\ZZ^d$ if $\vecnull\in\d A_i$. This is the reason for removing $\vecnull$ in the definition \eqref{iota0} of $\iota_0$. But Siegel's formula implies $\leb (\d A_i) = 0$ {\em if and only if} $\Theta_0 \d A_i=0$ (resp.~$\Theta \d A_i=0$) almost surely. Therefore the statement of Corollary  \ref{cor:ms_special} is in fact equivalent to Theorem  \ref{thm:ppoint} via \cite[Theorem 16.16]{Kallenberg02}. We will exploit the analogue in the treatment of marked lattices.

The following lemmas will be useful below.

\begin{lemma}\label{lem:onepoint}
For $A\in\curB(\RR^d)$, 
 \begin{equation}
  \PP(\Theta A\ge 1) \le \leb A  \quad (\vecxi\notin\ZZ^d), \qquad \PP(\Theta_0 A\ge 1) \le \leb A .
 \end{equation}

\end{lemma}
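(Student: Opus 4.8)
The plan is to deduce this from Siegel's mean-value formula $\EE\,\Theta A = \leb A$ (resp.\ $\EE\,\Theta_0 A = \leb A$), recorded in the discussion after Corollary~\ref{cor:ms_special}, together with Markov's inequality. Indeed, since $\Theta A$ and $\Theta_0 A$ are non-negative integer-valued random variables, the event $\{\Theta A \ge 1\}$ has probability
\begin{equation}
\PP(\Theta A \ge 1) \le \EE\,\Theta A = \leb A,
\end{equation}
and identically for $\Theta_0$. So the only thing to verify is that Siegel's formula is available in the two cases at hand, i.e.\ for $\Theta$ with $\vecxi\notin\ZZ^d$ and for $\Theta_0$ (the latter needing the removal of $\vecnull$, which is exactly why $\iota_0$ was defined as in \eqref{iota0}).

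For $\Theta_0$ this is the classical Siegel integral formula \cite{siegel_mean_1945} applied on $X_0 = \Gamma_0\backslash G_0$ with its Haar probability measure: for $B\in\curB(\RR^d)$,
\begin{equation}
\int_{X_0} \Big(\sum_{\vecy\in\ZZ^d x\setminus\{\vecnull\}} \1_B(\vecy)\Big)\, d\mu_0(x) = \leb B,
\end{equation}
which is precisely $\EE\,\Theta_0 B = \leb B$ since $\zeta$ is distributed according to $\tmu_\vecxi = \mu_0$ in the case $\vecxi\in\ZZ^d$ (here $X_\vecxi = X_0$). For $\Theta$ with $\vecxi\notin\ZZ^d$ one has, as noted in the text, an even simpler argument: the random affine lattice $\zeta$ distributed according to $\tmu_\vecxi$ is translation-invariant in distribution (its law is the translate by $(\1,\vecxi)$ of the $\affl_\vecxi$-invariant measure $\mu_\vecxi$, and when $\vecxi\notin\QQ^d$ this is the full affine-lattice ensemble, while when $\vecxi\in\QQ^d\setminus\ZZ^d$ one uses the $\RR^d$-translation covariance built into the construction of $\tmu_\vecxi$ on $X_\vecxi$), so $\EE\,\Theta(B+\vecv)$ is independent of $\vecv$; averaging $\vecv$ over a fundamental cell and using that $\zeta$ a.s.\ has exactly one point per unit-covolume cell forces $\EE\,\Theta B = \leb B$. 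Alternatively one simply invokes the Siegel--Veech formula for affine lattices \cite{veech_siegel_1998}.

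The steps are thus: (i) observe $\PP(N\ge 1)\le \EE N$ for any $\ZZ_{\ge0}$-valued $N$; (ii) identify $\EE\,\Theta A$ and $\EE\,\Theta_0 A$ with the Siegel integral on the relevant homogeneous space; (iii) conclude. There is no real obstacle here — the lemma is essentially a one-line consequence of facts already assembled in the excerpt; the only point requiring a word of care is that in the $\vecxi\in\ZZ^d$ case the estimate genuinely needs the punctured sum $\iota_0$, since otherwise $\vecnull\in A$ would give $\Theta A\ge 1$ deterministically while $\leb A$ could be arbitrarily small.
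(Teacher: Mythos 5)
Your proposal is correct and follows the paper's own one-line argument: apply the first-moment (Chebyshev/Markov) inequality $\PP(\Theta A\ge 1)\le \EE\,\Theta A$ and then invoke Siegel's formula $\EE\,\Theta A=\leb A$ (resp.\ $\EE\,\Theta_0 A=\leb A$), exactly as in the text. The extra remarks you give on why Siegel's formula is available in each case are fine but not a departure from the paper's route.
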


\begin{proof}
This follows from Chebyshev's inequality followed by Siegel's formula. 
\end{proof}

\begin{lemma}\label{lem:decay}
For $A\in\curB(\RR^d)$ and $L \in\Z_{\ge 0}$, 
\begin{equation}
\PP(\Theta A\ge L)
\ll_{A,\vecxi} \begin{cases} (1+L)^{-d-1} & (\vecxi\notin\Q^d) ,\\ 
(1+L)^{-d} &(\vecxi\in\Q^d\setminus\ZZ^d),
\end{cases}
\end{equation}
and
\begin{equation}
\PP(\Theta_0 A\ge L)
\ll_{A} 
(1+L)^{-d} .
\end{equation}
\end{lemma}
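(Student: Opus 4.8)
The plan is to bound the tail probabilities by estimating, under the measures $\bP_t$, the number of lattice points of $\affl(\1,\vecxi)ME(\vecu)\Phi^t$ that fall into a fixed bounded set $A$, and then to pass to the limit $t\to\infty$ via the weak convergence in Theorem~\ref{th:ms_full} (equivalently Theorem~\ref{thm:ppoint}). Since $\{\Theta A\ge L\}$ is not an open or closed condition in general, I would first reduce to a continuity-set statement: by Corollary~\ref{cor:ms_special} and the fact that $\Theta\,\d A=0$ a.s.\ when $\leb(\d A)=0$, the random variable $\Theta A$ is the distributional limit of $\Theta_t A$, so it suffices to prove the bound with a constant uniform in $t$ for $\Theta_t A$, i.e.\ for the explicit count $\#\{\vecm\in\ZZ^d : \vecm(\1,\vecxi)ME(\vecu)\Phi^t\in A\}$ integrated against $\lambda(d\vecu)$, and then let $t\to\infty$ (enlarging $A$ slightly to an open continuity set if needed). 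Actually the cleanest route is to prove the estimate directly for $\Theta$ itself using its explicit description as a translate of a (possibly affine, possibly rational-subgroup) lattice process, so I would aim for that.

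The core geometric input is a counting lemma of the following shape: for a bounded set $A$ of diameter $\asymp 1$, if a covolume-one affine lattice contains $L$ points in $A$, then a primitive sublattice of rank $\ge 1$ (the $\ZZ$-span of the differences of those points, together with translation structure) must be ``thin'' — more precisely, there is a subspace $V$ with $L$ points of the lattice in a bounded neighbourhood, forcing the successive minima, or equivalently the covolumes of the associated flag of sublattices, to be small, of size $\ll L^{-1}$ in an appropriate direction. This is the standard mechanism behind tail estimates for $\SL(d,\ZZ)\bs\SL(d,\RR)$: many lattice points in a fixed ball $\iff$ the lattice is deep in a cusp $\iff$ one can estimate the measure of the relevant cusp neighbourhood. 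Concretely I would use that $\{\Theta A\ge L\}$ is contained in a union, over rational subspaces $V\subset\RR^d$, of events ``the projection of the lattice to $V^\perp$ has a short vector and the induced lattice on $V$ has small covolume,'' and the total $\mu_\vecxi$-measure (resp.\ $\tmu_\vecxi$-measure) of such a neighbourhood is $\ll_{A,\vecxi} L^{-(d+1)}$, $L^{-d}$, or $L^{-d}$ in the three cases. The three different exponents come from the three different ambient spaces: for $\vecxi\notin\QQ^d$ the fiber is the full $d$-dimensional torus direction so one gains an extra power; for $\vecxi\in\QQ^d\setminus\ZZ^d$ one is on a proper translate $X_\vecxi$ of a lower "affine" type; and for $\Theta_0$ one is on the space of linear lattices with $\vecnull$ removed, which behaves like the $\vecxi\in\QQ^d\setminus\ZZ^d$ case. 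I would organize this by a sum over the "type" of the near-degenerate sublattice (its rank $k$, $1\le k\le d-1$), estimating each contribution by a $\twofone$-free elementary integral à la the reduction theory / Siegel-transform computations in \cite{marklof_strombergsson_free_path_length_2010}.

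The main obstacle I anticipate is handling the affine cases $\vecxi\in\QQ^d$ correctly, where $\affg_\vecxi$ is a conjugated copy of $\group$ and the measure $\tmu_\vecxi$ lives on the singular subspace $X_\vecxi$: here the "translation by $\vecxi$" breaks the exact homogeneity, so one cannot directly quote the standard $\SL_d$ tail bound, and must track how the rational shift interacts with the cusp neighbourhoods — in particular why the gain is one power of $L$ weaker than for $\vecxi\notin\QQ^d$. I would deal with this by writing $\affl_\vecxi\bs\affg_\vecxi$ explicitly as $q\SL(d,\ZZ)q^{-1}\bs\SL(d,\RR)$-like (for $\vecxi$ with denominator $q$), pulling the counting function back to $\SLR$, and applying the linear estimate there, paying attention to the bounded but $\vecxi$-dependent distortion of $A$ (hence the constants depending on $\vecxi$). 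A secondary technical point is the uniformity in $t$: since $\Phi^t$ is an unbounded diagonal flow, I would not try to estimate $\Theta_t A$ uniformly by hand, but instead invoke weak convergence to reduce everything to the single measure $\Theta$, using Lemma~\ref{lem:onepoint} (which already illustrates the pattern via Chebyshev + Siegel) as the base case $L$ small, and the geometric cusp argument for $L$ large. Once the cusp-measure estimate is in place, both displayed inequalities follow by summing the per-type bounds, which is routine.
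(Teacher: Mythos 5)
The paper itself gives no argument for this lemma: its ``proof'' is a one-line citation to \cite[Theorems~4.3, 4.5]{marklof_2000}, so any self-contained derivation is necessarily a different route. Your general mechanism (many points of a covolume-one affine lattice in a fixed bounded set force the lattice into a cusp neighbourhood, whose measure is then estimated) is indeed how such tail bounds are proved, and two of your three cases reduce cleanly: $\Theta_0$ is the Haar-random linear lattice with the origin removed, where $\{\Theta_0 A\ge L\}$ forces the first successive minimum to be $\ll_A 1/L$, an event of measure $\asymp L^{-d}$; and for $\vecxi\in\QQ^d\setminus\ZZ^d$ with denominator $q$ one has $\ZZ^d+\vecxi\subset q^{-1}\ZZ^d$, so the count is dominated by the linear count for the rescaled lattice and the same $L^{-d}$ bound applies with a $q$- (hence $\vecxi$-) dependent constant --- this is exactly the ``bounded but $\vecxi$-dependent distortion'' you allude to.

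The genuine gap is in the remaining case $\vecxi\notin\QQ^d$: you assert that the total measure of the bad cusp set is $\ll L^{-(d+1)}$, with the only justification that ``the fiber is the full $d$-dimensional torus direction so one gains an extra power''. That extra power is the entire content of the $-d-1$ exponent and needs an argument; as stated, your cusp-measure claim is a restatement of the lemma in different language. The standard way to produce it (and in essence what the cited theorems do) is to condition on the linear part: $\{\Theta A\ge L\}$ forces the linear lattice to have $\gg L$ points in the bounded difference set $A-A$, hence a vector of length $\ll_A 1/L$, an event of $\mu_0$-measure $\ll L^{-d}$; then, for fixed linear part, the translation is uniformly distributed modulo the lattice, so Markov's inequality gives a conditional bound $\le \leb(A)/L$, and integrating over the cusp region (summing dyadically over the length of the short vector, and checking that the higher-rank degenerations you list are dominated by the rank-one term) yields $L^{-d-1}$. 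Without this step the proposal does not establish the stronger exponent. A minor further point: your opening detour through $\Theta_t$, uniformity in $t$ and weak convergence is unnecessary --- the lemma concerns only the limit process $\Theta$, as you eventually note --- and transferring tail bounds through weak convergence in that direction would in any case require extra care with the non-continuity of the events $\{\Theta A\ge L\}$.
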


\begin{proof}
 See \cite[Theorems~4.3, 4.5]{marklof_2000}. 
\end{proof}

\section{Marked lattices and marked point processes}\label{sec:marked}

We will now extend the discussion in the previous section to the space of {\em marked} lattices, which is defined as a certain fiber bundle over the space of lattices. The key point is now to identify this space with a {\em marked} point process.

Each map $\omega\colon \ZZ^d \to Y$, where $Y$ is the {\em set of marks}, produces a marking of the affine lattice $\ZZ^d g$ with $g\in G$: the point $\vecy\in\ZZ^d g$ has mark $\omega(\vecy g^{-1})$. A $Y$-marked affine lattice is thus the point set
\begin{equation}\label{MPS}\{(\vecm g,\omega(\vecm))\mid\vecm\in\Z^d\}\end{equation}
in $\R^d\times Y$, and 
can be parametrized by the pair $(g,\omega)\in G \times \Omega$, where $\Omega=\{ \omega: \ZZ^d \to Y\}$ is the set of all possible markings. Note that, for $\gamma\in\Gamma$, the point $\vecy=\vecm \gamma g\in\ZZ^d g$ has mark \begin{equation}\omega(\vecy g^{-1})=\omega(\vecm\gamma)=\omega_\gamma(\vecy(\gamma g)^{-1}),\end{equation} 
where $\omega_\gamma(\vecm):=\omega(\vecm\gamma)$. Hence $(g,\omega)$ and $(\gamma g,\omega_\gamma)$ yield the same marked affine lattice.
This motivates the definition of the left action of $\Gamma$ on $G \times \Omega$ by $\gamma(g,\omega):=(\gamma g, \omega_\gamma)$. We define a right action of $G$ on $G \times \Omega$ by $(g,\omega) g':=(g g', \omega)$. In analogy with the homogeneous space setting we define
\begin{equation}
\scrX:= \Gamma\bs (G\times\Omega) 
\end{equation}
and
\begin{equation}
\scrX_\vecxi:= \Gamma\bs \Gamma(G_\vecxi\times\Omega)(\1,\vecxi) .
\end{equation} 
For the case $\vecxi\in\Z^d$, we have
\begin{equation}
\scrX_\vecxi = \Gamma\bs \Gamma((\1,\vecxi) G_0\times\Omega) 
=  \Gamma\bs \Gamma(G_0\times (\1,-\vecxi) \Omega) =\Gamma\bs \Gamma(G_0\times \Omega) =\scrX_\vecnull .
\end{equation} 
which in turn can be identified with the space $\Gamma_0\bs (G_0\times \Omega)$ via the map $\Gamma ((M,\vecnull),\omega)\mapsto \Gamma_0(M,\omega)$. Note that the point $\Gamma((\1,\vecxi) M,\omega)=\Gamma(M, \omega_{(\1,-\vecxi)})$ is mapped to $\Gamma_0(M,\omega_{(\1,-\vecxi)})$ under this identification. 

\begin{lemma}\label{lemmi1}
The map 
\begin{equation}
\Gamma(g,\omega)\mapsto \{(\vecm g,\omega(\vecm))\mid\vecm\in\Z^d\}
\end{equation}
yields a one-to-one correspondence between $\scrX$ and $Y$-marked affine lattices of covolume one. 
\end{lemma}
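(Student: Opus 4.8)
The plan is to verify three properties of the map: that it is well defined on $\Gamma$-orbits, that it surjects onto the set of $Y$-marked affine lattices of covolume one, and that it is injective. The first two cost essentially nothing. Well-definedness is precisely the computation carried out in the paragraph preceding the statement: for $\gamma\in\Gamma$, as $\vecm$ runs over $\Z^d$ so does $\vecm\gamma$, and in either presentation $(g,\omega)$ or $(\gamma g,\omega_\gamma)$ the lattice point $(\vecm\gamma)g\in\Z^d g$ carries the mark $\omega(\vecm\gamma)=\omega_\gamma(\vecm)$, so the two pairs determine the same subset of $\R^d\times Y$. Surjectivity holds by definition, since a $Y$-marked affine lattice of covolume one is a point set of the form \eqref{MPS} for some $(g,\omega)\in G\times\Omega$.

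The substance of the lemma is injectivity, which I would argue as follows. Suppose $(g,\omega)$ and $(g',\omega')$ give the same marked point set in $\R^d\times Y$. Projecting to the first factor and using that the affine map $\vecz\mapsto\vecz g$ is a bijection of $\R^d$ — its linear part lies in $\SL(d,\R)$, hence is invertible — one obtains the equality of unmarked affine lattices $\Z^d g=\Z^d g'$. By the correspondence $\Gamma h\mapsto\Z^d h$ between $\Gamma\bs G$ and affine Euclidean lattices of covolume one recalled in Section \ref{sec:lattice}, there is $\gamma\in\Gamma$ with $g'=\gamma g$. To match the markings, fix $\vecn\in\Z^d$; the point $(\vecn g',\omega'(\vecn))$ lies in the marked point set of $(g,\omega)$, so $\vecn g'=\vecm g$ and $\omega'(\vecn)=\omega(\vecm)$ for some $\vecm\in\Z^d$. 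From $\vecn\gamma g=\vecn g'=\vecm g$ and injectivity of $\vecz\mapsto\vecz g$ we get $\vecm=\vecn\gamma$, so $\omega'(\vecn)=\omega(\vecn\gamma)=\omega_\gamma(\vecn)$. Since $\vecn$ was arbitrary, $\omega'=\omega_\gamma$, hence $(g',\omega')=(\gamma g,\omega_\gamma)=\gamma(g,\omega)$ and the two pairs represent the same element of $\scrX$.

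I do not expect any real obstacle here: the only inputs are the unmarked lattice correspondence from Section \ref{sec:lattice} and the elementary fact that an affine transformation with invertible linear part is bijective. The one thing that needs care is bookkeeping with the conventions — the left action $\gamma(g,\omega)=(\gamma g,\omega_\gamma)$ with $\omega_\gamma(\vecm)=\omega(\vecm\gamma)$, together with the rule that in the presentation $(g,\omega)$ a point $\vecy\in\Z^d g$ carries the mark $\omega(\vecy g^{-1})$ rather than $\omega(\vecy)$ — so that the mark-matching step above comes out consistent. (The compatibility of this bijection with the identification $\scrX_\vecxi=\scrX_\vecnull$ for $\vecxi\in\Z^d$ noted just above the lemma is automatic and not needed in the proof.)
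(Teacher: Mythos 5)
Your proposal is correct and follows essentially the same route as the paper: the paper's proof likewise notes that equal marked point sets force $\Z^d g=\Z^d g'$, hence $g'=\gamma g$ for some $\gamma\in\Gamma$ by the unmarked correspondence, and then $\omega'=\omega_\gamma$. Your write-up merely makes explicit the mark-matching step (via injectivity of $\vecz\mapsto\vecz g$) and the well-definedness and surjectivity checks, which the paper treats as immediate from the discussion preceding the lemma.
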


\begin{proof}
For $(g,\omega)$ and $(g',\omega')$ to yield the same marked lattice, it is necessary that $\ZZ^d g=\ZZ^d g'$. Hence $g'=\gamma g$ for some $\gamma\in\Gamma$. But this implies $\omega'=\omega_\gamma$ and hence $(g',\omega')=\gamma(g,\omega)$.
\end{proof}

We now extend the above correspondences to the topological setting. Let $Y$ be a topological space, and endow the space of all markings $\Omega=Y^{\ZZ^d}$ with the product topology. Define the topology of $G\times \Omega$ by the product topology, and on $\scrX$, $\scrX_\vecxi$ by the quotient and subspace topology, respectively.
If $Y$ is locally compact second countable Hausdorff (lcscH), then $\RR^d\times Y$ is lcscH. Consider the measurable space $(Y,\curB(Y))$ with Borel $\sigma$-algebra $\curB(Y)$, and define $\scrM(\RR^d\times Y)$ as the space of $\sigma$-finite Borel measures on $\RR^d\times Y$ equipped with the vague topology. Under these assumptions, $\scrM(\RR^d\times Y)$ is Polish \cite[Theorem A 2.3]{Kallenberg02}.

Set
\begin{equation}
\kappa\colon \scrX\to \scrM(\RR^d\times Y),\qquad \Gamma(g,\omega) \mapsto \sum_{\vecy\in\ZZ^d g} \delta_{(\vecy,\omega(\vecy g^{-1}))}  
\end{equation}
and 
\begin{equation}
\kappa_0\colon \scrX_\vecnull \to \scrM(Y)\times \scrM(\RR^d\times Y),\qquad \Gamma(g,\omega) \mapsto \bigg(\delta_{\omega(\vecnull g^{-1})},\sum_{\vecy\in\ZZ^d g\setminus\{\vecnull\}} \delta_{(\vecy,\omega(\vecy g^{-1}))} \bigg).
\end{equation}
These maps are well-defined and injective by Lemma \ref{lemmi1}. Note that $\kappa_0$ maps the point $\Gamma((\1,\vecxi) M,\omega)=\Gamma(M, \omega_{(\1,-\vecxi)})$ with $\vecxi\in\ZZ^d$, $M\in G_0$ to 
\begin{equation}
\bigg(\delta_{\omega(-\vecxi)},\sum_{\vecm\in\ZZ^d\setminus\{\vecnull\}} \delta_{(\vecm M,\omega(\vecm-\vecxi )} \bigg) .
\end{equation}

\begin{prop}\label{topprop2}
The maps $\kappa$ and $\kappa_0$ are topological embeddings.
\end{prop}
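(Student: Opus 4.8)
The plan is to mimic the proof of Proposition \ref{topprop1}, adding the bookkeeping needed to track the marks. Both $\kappa$ and $\kappa_0$ are already known to be injective (Lemma \ref{lemmi1} and the subsequent remark), so the task reduces to proving (i) continuity of $\kappa$ (resp.\ $\kappa_0$) and (ii) continuity of the inverse on the image. Since $\kappa$ is $\Gamma$-equivariant, for continuity it suffices to show that if $(g_j,\omega_j)\to(g,\omega)$ in $G\times\Omega$ then $\sum_{\vecy\in\ZZ^d g_j}F(\vecy,\omega_j(\vecy g_j^{-1}))\to\sum_{\vecy\in\ZZ^d g}F(\vecy,\omega(\vecy g^{-1}))$ for every $F\in C_c(\RR^d\times Y)$. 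As in Proposition \ref{topprop1}, the support of $F$ projects to a compact set $A\subset\RR^d$, and since $g_j\to g$ the set $\ZZ^d\cap\overline{\bigcup_j Ag_j^{-1}}$ is a \emph{finite} set $\{\vecm_1,\dots,\vecm_r\}$; for $\vecm\notin$ this set both summands vanish identically. For each of the finitely many $\vecm_i$, we have $\vecm_i g_j\to\vecm_i g$ in $\RR^d$, and convergence $\omega_j\to\omega$ in the product topology means precisely coordinatewise convergence $\omega_j(\vecm_i)\to\omega(\vecm_i)$ in $Y$; combining these with joint continuity of $F$ gives $F(\vecm_i g_j,\omega_j(\vecm_i))\to F(\vecm_i g,\omega(\vecm_i))$, and summing the finitely many terms yields continuity. (Strictly one should also invoke that for all large $j$ the support meets only the $\vecm_i$; this is the same compactness argument.)

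For the inverse, fix $x=\Gamma(g,\omega)\in\scrX$ with $g=(M,\vecxi)$, and suppose $\kappa(x_j)\to\kappa(x)$ in $\scrM(\RR^d\times Y)$; I must produce representatives $(g_j,\omega_j)\to(g,\omega)$. The lattice part is handled exactly as in Proposition \ref{topprop1}: testing against functions of the form $f_{k,\delta}(\vecy)\,\psi(y)$ where $f_{k,\delta}$ are the tent functions used there and $\psi$ is chosen with $\psi(\omega(\vece_k g^{-1}))=1$ (possible since $Y$ is Hausdorff, so we may take $\psi\in C_c(Y)$ supported near that mark), we locate for each $k=0,1,\dots,d$ a point $\vecy_k^{(j)}\in\ZZ^d x_j$ with $\vecy_k^{(j)}\to\vece_k g$ whose mark converges to $\omega(\vece_k g^{-1})$. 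As before, the vectors $\vecy_k^{(j)}-\vecy_0^{(j)}$ ($k=1,\dots,d$) form a basis of the covolume-one lattice $\ZZ^d x_j-\vecy_0^{(j)}$ for $j$ large, defining $M_j$ and $\vecxi_j=\vecy_0^{(j)}$, hence $g_j=(M_j,\vecxi_j)\to g$. Now to recover the marking: for an arbitrary $\vecm\in\ZZ^d$, the point $\vecm g_j\to\vecm g$, and $\vecm g$ carries mark $\omega(\vecm)$ under $x$; testing $\kappa(x_j)$ against $f\cdot\psi$ with $f$ a tent supported near $\vecm g$ and $\psi$ near the value $\omega(\vecm)$, the convergence $\kappa(x_j)(f\psi)\to\kappa(x)(f\psi)>0$ forces, for large $j$, a lattice point of $x_j$ near $\vecm g_j$ carrying a mark near $\omega(\vecm)$; by discreteness that point is $\vecm g_j$ itself (for $j$ large depending on $\vecm$), so $\omega_j(\vecm)\to\omega(\vecm)$. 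This holds for every fixed $\vecm$, which is exactly convergence $\omega_j\to\omega$ in the product topology. Hence $(g_j,\omega_j)\to(g,\omega)$ and $x_j\to x$ in $\scrX$.

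The argument for $\kappa_0$ on $\scrX_\vecnull$ is essentially the same, with two cosmetic changes: the origin $\vecnull$ is removed from the $\RR^d\times Y$ component, so the tent-function at $k=0$ is replaced by testing the first ($\scrM(Y)$) component against a $\psi\in C_c(Y)$ with $\psi(\omega(\vecnull g^{-1}))=1$ to recover the mark at the origin; and one uses that for $x_j\in\scrX_\vecnull$ the underlying lattice is genuinely a lattice (contains $\vecnull$), so $\vecy_0^{(j)}=\vecnull$ may be taken, and $M_j\in G_0$ directly. The main obstacle — really the only place any care is needed beyond Proposition \ref{topprop1} — is the last step of recovering the full marking $\omega_j\to\omega$ pointwise: one must argue that a nearby-in-position, nearby-in-mark point of $x_j$ is forced by discreteness to be \emph{exactly} $\vecm g_j$ and not merely some other lattice point, which is where separating $Y$ (Hausdorff) and discreteness of the lattice $\ZZ^d g_j$ are both used, with the threshold $j$ depending on $\vecm$; since product-topology convergence is tested one coordinate at a time, this $\vecm$-dependence is harmless.
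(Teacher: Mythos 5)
Your proof is correct and follows essentially the same route as the paper: continuity is obtained by reducing, via compact support, to finitely many lattice coordinates, and continuity of the inverse by testing against tent functions in position multiplied by $C_c(Y)$ functions in the mark, recovering first the lattice data and then the marks coordinatewise in the product topology (only note the harmless slip that the mark carried by the point $\vece_k g$ is $\omega(\vece_k)$, not $\omega(\vece_k g^{-1})$, and that choosing $\psi\in C_c(Y)$ with $\psi=1$ near a given mark uses local compactness of $Y$, not just the Hausdorff property). The paper shortcuts the step $g_j\to g$ by citing Proposition \ref{topprop1}, but your slightly more self-contained treatment of that step is the same argument in substance.
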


\begin{proof}
To prove continuity of $\kappa$, we need to show that $(g_j,\omega_j)\to (g,\omega)$ in $G\times \Omega$ implies
\begin{equation} \sum_{\vecy\in\ZZ^d g_j} f(\vecy,\omega_j(\vecy g_j^{-1})) \to  \sum_{\vecy\in\ZZ^d g} f(\vecy,\omega(\vecy g^{-1})) \label{eq:kappa_conv}
\end{equation}
for every $f\in C_c(\RR^d\times Y)$.
As in the proof of Proposition \ref{topprop1}, the compact support of $f$ reduces the problem to showing that
$f(\veca g_j,\omega_j(\veca))\to f(\veca g,\omega(\veca))$ for finitely many $\veca\in\ZZ^d$. The latter follows from the continuity of $f$.

Let $\tilde\kappa\colon \scrX \to \kappa(\scrX)$, $x\mapsto\kappa(x)$.
To establish the continuity of $\tilde\kappa^{-1}$, we need to show that $\kappa(x_j)f \to \kappa(x) f$ for every $f\in C_c(\RR^d\times Y)$ implies $x_j\to x$ in $\scrX$. We already know from Proposition \ref{topprop1} that $g_j\to g$.  Fix $\vecm\in\ZZ^d$, and define for $g\in G$, $h\in C_c(Y)$,
\begin{equation}
f_{h,\delta}(\vecy,t)=
\begin{cases} 
h(t) & \text{if $\| \vecy - \vecm g\| < \frac{\delta}{2}$}\\
(2- \frac{2}{\delta} \| \vecy - \vecm g\|) h(t) & \text{if $\frac{\delta}{2} \le \| \vecy - \vecm g\| <  \delta$}\\
0 & \text{if $\| \vecy - \vecm g\| \ge \delta$.}
\end{cases}\label{eq:smooth_ind}
\end{equation}
Note that $f_{h,\delta} \in C_c(\RR^d\times Y)$. By the discreteness of $\ZZ^d g$, there is $\delta_0>0$ such that for all $\delta\in(0,\delta_0]$,
\begin{equation} \kappa(x) f_{h,\delta} = \sum_{\vecy\in\ZZ^d g} f_{h,\delta}(\vecy,\omega(\vecy g^{-1})) = f_{h,\delta}(\vecm g,\omega(\vecm)) = h(\omega(\vecm)) .\end{equation}
Since $g_j\to g$, for given $\delta>0$, there is $j_0\in\NN$ such that for  all $j\ge j_0$, 
\begin{equation}  \kappa(x_j)  f_{h,\delta} = \sum_{\vecy\in\ZZ^d g_j} f_{h,\delta}(\vecy,\omega_j(\vecy g_j^{-1})) = f_{h,\delta}(\vecm g_j,\omega_j(\vecm)) = h(\omega_j(\vecm)) .
\end{equation}
Now $ \kappa(x_j) f \to \kappa(x) f$ for every $f\in C_c(\RR^d\times Y)$ implies $h(\omega_j(\vecm))\to h(\omega(\vecm))$ for every fixed $h\in C_c(Y)$ and $\vecm\in\ZZ^d$. That is, $\delta_{\omega_j(\vecm)}\to
\delta_{\omega(\vecm)}$ in $\scrM(Y)$. Any open set $B$ containing $\omega(\vecm)$ satisfies  $\delta_{\omega(\vecm)} B=1$ and $\delta_{\omega(\vecm)}\partial B=0$. Thus $\delta_{\omega_j(\vecm)}B\to 1$ for any open set $B$ containing $\omega(\vecm)$ \cite[Theorem A 2.3]{Kallenberg02}, and therefore $\omega_j(\vecm)\to\omega(\vecm)$. This implies $\omega_j\to\omega$ in the product topology of $\Omega$.

The proof for $\kappa_0$ is similar to the above, with the following modifications. Eq.~\eqref{eq:kappa_conv} is replaced by 
\begin{align}
 \bigg(f_1(\omega_j(\vecnull {g_j}^{-1})),  \sum_{\vecy\in\ZZ^d g_j\setminus\{\vecnull\}} f_2(\vecy,\omega_j(\vecy g_j^{-1})) \bigg)
 \to  
 \bigg(f_1(\omega(\vecnull g^{-1})), \sum_{\vecy\in\ZZ^d g \setminus\{\vecnull\}} f_2(\vecy,\omega(\vecy g^{-1})) \bigg),
\end{align}
for every $(f_1, f_2) \in C_c(Y)\times C_c(\R^d\times Y),$ which is seen to hold as in the argument for $\kappa$. 

Let $\tilde \kappa_0 \colon \scrX \to \kappa_0(\scrX)$, $x\mapsto \kappa_0 (x)$. To establish continuity of $\tilde \kappa_0^{-1}$, it is enough to show that 
\begin{align}
 [\kappa_0(x_j)(f_1, f_2) \to \kappa_0(x)(f_1, f_2), \text{ for all }(f_1,f_2)\in C_c(Y)\times C_c(\R^2\times Y)] \Rightarrow [x_j\to x].
\end{align}
We know from Proposition \ref{topprop1} that $g_j\to g$, and need to show that $\omega_j\to \omega$. For any $\vecm\in\Z^d\setminus \{\vecnull\}$, define $f_{h_2,\delta}$ as in \eqref{eq:smooth_ind}. 
It follows from discreteness of $\Z^d g$ that there is $\delta_0>0$ such that for all $\delta \in (0,\delta_0]$, 
\begin{align} \kappa_0(x) (h_1, f_{h_2,\delta}) & =
\bigg(h_1(\vecnull), \sum_{\vecy\in\ZZ^d g\setminus\{\vecnull\}} f_{h_2,\delta}(\vecy,\omega(\vecy g^{-1})) \bigg)\\
&
= \left( h_1(\omega(\vecnull)) , f_{h_2,\delta}(\vecm g,\omega(\vecm)) \right) \\
&=\left(h_1(\omega(\vecnull) ), h_2(\omega(\vecm))\right) .
\end{align}
Since $g_j\to g$, we have
\begin{align}
 \kappa_0(x_j) (h_1, f_{h_2,\delta}) =\left(h_1(\omega_j(\vecnull) ), h_2(\omega_j(\vecm))\right) 
\end{align}
for all $j\ge j_0$. Thus, we have 
\begin{align}
 \left(h_1(\omega_j(\vecnull) ), h_2(\omega_j(\vecm))\right)  \to  \left(h_1(\omega(\vecnull) ), h_2(\omega(\vecm))\right)
\end{align}
for all bounded continuous functions $h_1$, $h_2$ and $\vecm\in\Z^d\setminus \{\vecnull\}$, inasmuch as 
$ (\delta_{\omega_j(\vecnull)}, \delta_{\omega_j(\vecm)}) \to  (\delta_{\omega(\vecnull)}, \delta_{\omega(\vecm)}) $
in $\scrM(Y) \times \scrM(\R^d\times Y)$ for each $\vecm\in\Z^d\setminus \{\vecnull\}$. This implies that $\omega_j\to\omega$ in the product topology on $\Omega$, as needed. 

\end{proof}

To define probability measures on the space $\scrX_\vecxi$ of affine marked lattices, let us fix a random field $\eta:\ZZ^d\to Y$, $\vecm\mapsto\eta(\vecm)$ defined by the probability measure $\nu$ on the measurable space $(\Omega,\curB)$ (where $\curB=\curB(\Omega)$ is the Borel $\sigma$-algebra on $\Omega=Y^{\ZZ^d}$ with respect to the product topology) via
\begin{equation}\label{eq:field_def}
\PP(\eta(\vecm_1)\in A_1,\ldots, \eta(\vecm_k)\in A_k ) =\nu \{ \omega\in\Omega \mid 
\omega(\vecm_1)\in A_1,\ldots, \omega(\vecm_k)\in A_k \}
\end{equation}
for all $A_1,\ldots,A_k\in\curB(Y)$. In other words, $\eta$ is a random element in $\Omega$ distributed according to $\nu$.

We define the {\em mixing coefficient of order $k$} of the random field $\eta$ by
\begin{equation}
\vartheta_k(s) = \sup\big\{ \vartheta_k(\vecm_1,\ldots,\vecm_k) \big| \vecm_1,\ldots,\vecm_k\in\ZZ^d, \; \|\vecm_i-\vecm_j\|\ge s \text{ if } i\neq j \big\} ,
\end{equation}
where $\|\cdot\|$ is the Euclidean norm, and
\begin{multline}
\vartheta_k(\vecm_1,\ldots,\vecm_k) := \sup_{A_1,\dots,A_k\in \mathscr B(Y)} 
 \big| \PP(\eta(\vecm_1)\in A_1,\ldots, \eta(\vecm_k)\in A_k) \\ - \PP(\eta(\vecm_1)\in A_1) \cdots \PP(\eta(\vecm_k)\in A_k) \big| .
\end{multline}
We say {\em $\eta$ is mixing of order $k$  if}
\begin{equation}
\lim_{s\to\infty} \vartheta_k(s) =0 ,
\end{equation}
and {\em mixing of all orders} if it is mixing of order $k$ for all $k\in\NN$. Note that mixing of order two need not imply mixing of order three; cf.\ Ledrappier's ``three dots'' example \cite{ledrappier_champ_1978,einsiedler_ward_2011}. 

Given $\vecxi\in\RR^d$ and a probability measure $\rho$ on $Y$, we also define  
\begin{equation}\label{mix00}
\beta_{\vecxi}(s)= \sup_{\|\vecm+\vecxi\|\ge s} \sup_{A\in\curB(Y)} \big| \PP(\eta(\vecm)\in A) - \rho(A)\big| .
\end{equation}
If $\lim_{s\to\infty}\beta_{\vecnull}(s)=0$, we say {\em $\eta$ has asymptotic distribution $\rho$}. The presence of $\vecxi$ in \eqref{mix00} is purely for notational convenience further on.

The above mixing conditions will be sufficient for the results in Section \ref{sec:main0}. We will need the following stronger variant for our main results in Section \ref{sec:main}.

Given a non-empty subset $J \subset\ZZ^d$ and a map $a: J\to Y$, we define the {\em cylinder set}
\begin{equation}
\Omega_a = \{ \omega\in\Omega  \mid \omega(\vecm)=a(\vecm) \; \forall \vecm\in J \}.
\end{equation}
The subalgebra generated by all cylinder sets $\Omega_a$ for a given $J$ is denoted by $\curB_J$. The separation of two non-empty subsets $J_1,J_2\subset\ZZ^d$ is defined as
\begin{equation}
\sep(J_1,J_2) = \min \big\{ \|\vecm_1 -\vecm_2 \| : \vecm_1\in J_1,\; \vecm_2\in J_2 \big\}  .
\end{equation}
We define the \emph{strong-mixing coefficient} of the random field $\eta$  by
\begin{equation}
\alpha(s) = \sup\big\{ \alpha(J_1,J_2)  \big| J_1,J_2\subset\ZZ^d \text{ non-empty,} \; \sep(J_1,J_2)\ge s \big\} ,
\end{equation}
where
\begin{equation}
\alpha(J_1,J_2) := \sup \big\{ \big| \nu \big( A_1\cap A_2 \big) - \nu(A_1) \nu(A_2) \big| : A_1\in\curB_{J_1},\; A_2\in\curB_{J_2} \big\} .
\end{equation}
We say {\em $\eta$ is strongly mixing if}
\begin{equation}\label{mix0}
\lim_{s\to\infty} \alpha(s) =0 .
\end{equation}
Note that, in the case of singleton sets, we have
\begin{equation}
\alpha(\{\vecm_1\},\{\vecm_2\} ) = \vartheta_2(\vecm_1,\vecm_2).
\end{equation}
Thus strong mixing implies mixing of order two. In fact, strong mixing implies mixing of any order. This follows from the following observation.
For $k\ge 2$, put
\begin{equation}
\alpha_k(s) := \sup\big\{ \alpha_k(J_1,\ldots,J_k)  \big| \text{$J_1,\ldots, J_k\subset\ZZ^d$ non-empty, $\sep(J_i,J_j)\ge s$ for $i\neq j$} \big\} ,
\end{equation}
where
\begin{equation}
\alpha_k(J_1,\ldots,J_k) := \sup \bigg\{ \bigg| \nu \big( A_1\cap\cdots\cap A_k \big) - \prod_{i=1}^k \nu(A_i) \bigg| : A_i\in\curB_{J_i} \bigg\} .
\end{equation}

\begin{lemma}
If $\alpha(s)\to 0$ then $\alpha_k(s)\to 0$ and $\vartheta_k(s)\to 0$ for all $k\ge 2$.
\end{lemma}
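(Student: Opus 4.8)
The plan is to reduce the statement about $\alpha_k$ to an inductive application of the $k=2$ strong mixing hypothesis, and then derive the statement about $\vartheta_k$ as a corollary. First I would observe that $\vartheta_k(\vecm_1,\dots,\vecm_k) \le \alpha_k(\{\vecm_1\},\dots,\{\vecm_k\})$ whenever $\|\vecm_i-\vecm_j\|\ge s$, because each event $\{\eta(\vecm_i)\in A_i\}$ lies in $\curB_{\{\vecm_i\}}$; hence it suffices to prove $\alpha_k(s)\to 0$ for all $k\ge2$, as the $\vartheta_k$ claim then follows immediately (the supremum defining $\vartheta_k(s)$ is over a subfamily of the configurations allowed in $\alpha_k(s)$, with the extra constraint that all $J_i$ are singletons).

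For the main claim I would argue by induction on $k$, the base case $k=2$ being the hypothesis $\alpha(s)\to0$. For the inductive step, fix non-empty $J_1,\dots,J_{k+1}\subset\ZZ^d$ with $\sep(J_i,J_j)\ge s$ for $i\neq j$, and fix events $A_i\in\curB_{J_i}$. Write $J := J_1\cup\cdots\cup J_k$ and note that $A_1\cap\cdots\cap A_k\in\curB_J$. Since $\sep(J,J_{k+1})\ge s$ (as $\sep(J_i,J_{k+1})\ge s$ for each $i\le k$), the strong-mixing bound gives
\begin{equation}
\bigl| \nu(A_1\cap\cdots\cap A_k\cap A_{k+1}) - \nu(A_1\cap\cdots\cap A_k)\,\nu(A_{k+1}) \bigr| \le \alpha(s).
\end{equation}
Then I would use the inductive hypothesis to replace $\nu(A_1\cap\cdots\cap A_k)$ by $\prod_{i=1}^k\nu(A_i)$ at the cost of $\alpha_k(s)$, and multiply through by $\nu(A_{k+1})\le1$. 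A triangle inequality yields
\begin{equation}
\biggl| \nu(A_1\cap\cdots\cap A_{k+1}) - \prod_{i=1}^{k+1}\nu(A_i) \biggr| \le \alpha(s) + \alpha_k(s).
\end{equation}
Taking suprema over the $A_i$ and over admissible configurations $J_1,\dots,J_{k+1}$ gives $\alpha_{k+1}(s)\le\alpha(s)+\alpha_k(s)$, which tends to $0$ as $s\to\infty$ by the inductive hypothesis. (Iterating, one gets the clean bound $\alpha_k(s)\le(k-1)\alpha(s)$.)

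The only point requiring a little care — and the step I would flag as the mild obstacle — is the claim that $A_1\cap\cdots\cap A_k\in\curB_J$, i.e. that the algebra generated by cylinder sets on the union is large enough to contain intersections of events from the pieces. This holds because $\curB_{J_i}\subseteq\curB_J$ for each $i$ (any cylinder set constrained on $J_i$ is also a cylinder set constrained on the larger index set $J$, obtained by leaving the extra coordinates unconstrained), and $\curB_J$ is an algebra, hence closed under finite intersection. One should also check that $\sep$ behaves correctly under unions, namely $\sep(J_1\cup\cdots\cup J_k, J_{k+1}) = \min_{i\le k}\sep(J_i,J_{k+1})\ge s$, which is immediate from the definition as a minimum over pairs of points. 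With these observations in place the induction goes through with no further difficulty.
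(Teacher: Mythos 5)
Your proposal is correct and follows essentially the same route as the paper: the same reduction of $\vartheta_k$ to $\alpha_k$ via singleton sets, and the same induction step setting $J=J_1\cup\cdots\cup J_k$, using $A_1\cap\cdots\cap A_k\in\curB_J$ together with the bound $\nu(A_{k+1})\le 1$ and the triangle inequality to get $\alpha_{k+1}(s)\le\alpha_k(s)+\alpha(s)$. The only addition beyond the paper's argument is the explicit iterated bound $\alpha_k(s)\le(k-1)\alpha(s)$, which is a harmless refinement.
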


\begin{proof}
Note that $\alpha_k(s)\to 0$ implies $\vartheta_k(s)\to 0$, since 
\begin{equation}
\alpha_k(\{\vecm_1\},\ldots,\{\vecm_k\} ) = \vartheta_k(\vecm_1,\ldots,\vecm_k).
\end{equation}
We will show that $\alpha_k(s)\to 0$ implies $\alpha_{k+1}(s)\to 0$. The claim then follows by induction on $k$.
We have $\curB_{J_i}\subset \curB_J$ for $J:=J_1\cup\cdots\cup J_k$ and therefore
\begin{equation}
A:=A_1\cap\cdots \cap A_k \in \curB_J.
\end{equation}
This in turn implies
\begin{align}
\alpha_{k+1}(J_1,\ldots,J_{k+1}) 
&=  \sup \bigg\{ \bigg| \nu \big( A\cap A_{k+1} \big) - \prod_{i=1}^{k+1} \nu(A_i) \bigg| : A_i\in\curB_{J_i} \bigg\}  \\
&\le  \sup \bigg\{ \bigg| \nu \big( A\cap A_{k+1} \big) - \nu(A) \nu(A_{k+1}) \bigg| : A\in\curB_J,\;  A_{k+1}\in\curB_{J_{k+1}} \bigg\}  \\
& \quad +  \sup \bigg\{ \bigg| \nu(A) \nu(A_{k+1}) - \prod_{i=1}^{k+1} \nu(A_i) \bigg| : A_i\in\curB_{J_i}  \bigg\}  .
\end{align}
The first term equals $\alpha(J,J_{k+1})$, and the second satisfies
\begin{multline}
\sup \bigg\{ \bigg| \nu(A) \nu(A_{k+1}) - \prod_{i=1}^{k+1} \nu(A_i) \bigg| : A_i\in\curB_{J_i}  \bigg\} \\
 \leq 
\sup \bigg\{ \bigg| \nu(A) - \prod_{i=1}^{k} \nu(A_i) \bigg| : A_i\in\curB_{J_i}  \bigg\}  = \alpha_{k}(J_1,\ldots,J_k) ,
\end{multline}
because $\nu(A_{k+1})\le 1$.
Therefore $\alpha_{k+1}(s)\le \alpha_k(s)+\alpha(s)$, which yields the desired conclusion.
\end{proof}

An important example of a (strongly) mixing random field is the case when $\eta$ is a field of i.i.d.\ random elements with law $\rho$, and thus $\alpha(s)=0$ for all $s$. In this case we write $\nu=\nu_\rho$. Note that $\nu_\rho$ is invariant under the $\Gamma$-action on $\Omega$. Given such $\nu_\rho$, consider the product measure $\mu_\vecxi \times \nu_\rho$ on $G_\vecxi\times\Omega$. We denote the push-forward of this measure (restricted to a fundamental domain for the $\Gamma$-action) under the projection map
\begin{equation}
G_\vecxi\times\Omega \to \scrX_\vecxi,\qquad (g,\omega)\mapsto \Gamma (g ,\omega) (\1,\vecxi)
\end{equation}
by $\tmu_{\vecxi,\rho}$. Recall $\mu_\vecxi$ is normalized so that it projects to a probability measure on $X_\vecxi$, which implies $\tmu_{\vecxi,\rho}$ is a probability measure. Note that $\tmu_{\vecxi,\rho}$ is well defined thanks to the $\Gamma$-invariance of $\nu_\rho$.

A random element $\zeta$ in $\scrX_\vecxi$ defines a point process $\Xi=\kappa(\zeta)$ in $\scrM(\RR^d\times Y)$. The Siegel formula for the space of lattices yields:

\begin{lemma}\label{lem:fst}
Let $\vecxi\notin\ZZ^d$. For  $D\in\curB(\RR^d\times Y)$,
\begin{equation}
\EE \Xi D = (\leb\times \rho) D  .
\end{equation}
\end{lemma}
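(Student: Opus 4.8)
The plan is to reduce the statement for marked lattices to the Siegel formula for the underlying unmarked lattice space, exploiting the independence of the random marking from the lattice. Recall that $\Xi = \kappa(\zeta)$, where $\zeta$ is distributed according to $\tmu_{\vecxi,\rho}$, and by construction $\zeta$ is the image of a pair $(g,\omega)$ where $g$ is distributed according to $\mu_\vecxi$ (on a fundamental domain) and $\omega$ is distributed according to $\nu_\rho$, \emph{independently} of $g$. Writing out $\kappa(\zeta)$, we have for $f\in C_c(\RR^d\times Y)$, $f\ge 0$,
\begin{equation}
\EE\, \Xi f = \EE \sum_{\vecy\in\ZZ^d g (\1,\vecxi)} f\bigl(\vecy,\omega(\vecy ((\1,\vecxi))^{-1} g^{-1})\bigr) = \EE \sum_{\vecm\in\ZZ^d} f\bigl(\vecm g(\1,\vecxi),\omega(\vecm)\bigr),
\end{equation}
after reindexing by $\vecm\in\ZZ^d$ (using $\vecy = \vecm g(\1,\vecxi)$, so $\vecy((\1,\vecxi))^{-1} g^{-1} = \vecm$). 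The key point is that $\vecm$ ranges over a \emph{fixed} set $\ZZ^d$, so the summation index does not interact with the randomness of $\omega$.

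First I would condition on $g$ (equivalently, use Tonelli on the product measure $\mu_\vecxi\times\nu_\rho$ for the non-negative integrand) to pull the expectation over $\omega$ inside the sum:
\begin{equation}
\EE\, \Xi f = \int \sum_{\vecm\in\ZZ^d} \EE_\omega\, f\bigl(\vecm g(\1,\vecxi),\omega(\vecm)\bigr)\, d\mu_\vecxi(g).
\end{equation}
Since each $\omega(\vecm) = \eta(\vecm)$ has law $\rho$ (this is exactly the assumption $\nu = \nu_\rho$, a field of i.i.d.\ elements with law $\rho$), the inner expectation is $\int_Y f(\vecm g(\1,\vecxi),y)\, d\rho(y)$. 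Hence, with $f_\rho(\vecv) := \int_Y f(\vecv,y)\, d\rho(y)$, which lies in $C_c(\RR^d)$,
\begin{equation}
\EE\, \Xi f = \int \sum_{\vecm\in\ZZ^d} f_\rho\bigl(\vecm g(\1,\vecxi)\bigr)\, d\mu_\vecxi(g) = \EE\, \Theta f_\rho,
\end{equation}
where $\Theta = \iota(\zeta_0)$ is the (unmarked) affine-lattice point process associated to $\tmu_\vecxi$. Now Siegel's formula for affine lattices (quoted in the paragraph following Corollary \ref{cor:ms_special}, namely $\EE\,\Theta B = \leb(B)$ for $\vecxi\notin\ZZ^d$) gives $\EE\,\Theta f_\rho = \int_{\RR^d} f_\rho\, d\leb = \int_{\RR^d\times Y} f\, d(\leb\times\rho)$, which is the claim for $f\in C_c(\RR^d\times Y)$. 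To pass from $f\in C_c$ to indicator functions of arbitrary bounded Borel $D\in\curB(\RR^d\times Y)$, I would invoke a standard monotone-class / approximation argument: both $D\mapsto\EE\,\Xi D$ and $D\mapsto(\leb\times\rho)(D)$ are measures on $\curB(\RR^d\times Y)$ (countable additivity of $\EE\,\Xi$ follows from monotone convergence), they agree on the $\pi$-system of products $B\times C$ with $B$ bounded Borel in $\RR^d$ and $C\in\curB(Y)$ — which is checked by the same computation, replacing $f$ by $\1_B\otimes\1_C$ and using $\EE\,\Theta B = \leb(B)$ directly — and this $\pi$-system generates the Borel $\sigma$-algebra, so the two measures coincide on all bounded Borel sets.

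The main obstacle, such as it is, is purely bookkeeping: making sure the reindexing $\vecy\in\ZZ^d g(\1,\vecxi) \leftrightarrow \vecm\in\ZZ^d$ correctly tracks the mark, i.e.\ that the mark attached to $\vecy$ is $\omega(\vecy((\1,\vecxi))^{-1} g^{-1}) = \omega(\vecm)$ as dictated by the definition of $\kappa$ and the parametrization of $\scrX_\vecxi$ via $(g,\omega)\mapsto\Gamma(g,\omega)(\1,\vecxi)$; and checking that the answer does not depend on the choice of fundamental domain for $\Gamma$, which is automatic because $\nu_\rho$ is $\Gamma$-invariant (as noted before the lemma) and the unmarked Siegel formula is already $\Gamma$-invariant. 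No measure rigidity or dynamics is needed here — the lemma is a direct consequence of Siegel plus Fubini plus the i.i.d.\ assumption.
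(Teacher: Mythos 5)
Your proof is correct and follows essentially the same route as the paper: condition on the lattice, use that the i.i.d.\ marks have law $\rho$ independent of $g$, and reduce to Siegel's formula $\EE\,\Theta B=\leb B$ for the unmarked process. The paper's version simply works directly with product sets $D=A\times B$ and omits the explicit Tonelli/$\pi$-system bookkeeping that you spell out.
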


\begin{proof}
It suffices to consider $D=A\times B$ with $A\in\curB(\RR^d)$ and $B\in\curB(Y)$. Since the $\eta(\vecm)$ are independent with law $\rho$, we have $\Xi D= \Theta (A) \rho (B)$. The expectation is $\EE \Xi D= \EE(\Theta A) \rho (B)=\leb (A)\rho (B)$ by Siegel's formula.
\end{proof}

A further special case is when $\{\eta(\vecm), \vecm\in\Z^d\}$ is a collection of independent random variables with $\eta(\vecnull)$ distributed according to $\rho_0$ and $\eta(\vecm)$ distributed according to $\rho$ when $\vecm\ne \vecnull$. In this case we write $\nu=\nu_{\rho_0,\rho}$. Note that $\nu_{\rho_0,\rho}$ is now invariant under the $\Gamma_0$-action on $\Omega$.
Given such $\nu_{\rho_0,\rho}$, consider the product measure $\mu_0 \times \nu_{\rho_0,\rho}$ on $G_0\times\Omega$. We denote the push-forward of this measure (restricted to a fundamental domain for the $\Gamma_0$-action) under the projection map
\begin{equation}
G_0\times\Omega \to \scrX_\vecnull,\qquad (g,\omega)\mapsto \Gamma (g ,\omega)
\end{equation}
by $\tmu_{0,\rho_0,\rho}$. Since $\mu_0$ is normalized so that it projects to a probability measure on $X_0$, also $\tmu_{0,\rho_0,\rho}$ is a probability measure. 
Here $\tmu_{0,\rho_0,\rho}$ is well defined because of the $\Gamma_0$-invariance of $\nu_{\rho_0,\rho}$. 
A random element $\zeta$ in $\scrX_\vecnull$ defines a random product measure $(\varphi,\Xi_0)=\kappa(\zeta)$ in $\scrM(Y)\times\scrM(\RR^d\times Y)$, where $\varphi$ is a point mass and $\Xi_0$ a point process. 

\begin{lemma}\label{lem:scd}
For  $D\in\curB(Y\times\RR^d\times Y)$,
\begin{equation}
\EE (\varphi,\Xi_0) D = (\rho_0\times\leb\times \rho) D .
\end{equation}
\end{lemma}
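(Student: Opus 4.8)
The plan is to reduce the computation of $\EE(\varphi,\Xi_0)D$ to the Siegel formula for the space of lattices, exactly as in the proof of Lemma \ref{lem:fst}, but keeping track of the distinguished point $\vecnull$ which carries the separately-distributed mark with law $\rho_0$. First I would reduce to product sets: by linearity and a monotone class argument it suffices to take $D = B_0\times A\times B$ with $B_0\in\curB(Y)$, $A\in\curB(\RR^d)$, $B\in\curB(Y)$, since such sets generate $\curB(Y\times\RR^d\times Y)$ and are closed under intersection. Writing the random element $\zeta$ in $\scrX_\vecnull$ as $\Gamma(g,\omega)$ with $g=(M,\vecnull)$ distributed according to $\mu_0$ and $\omega$ distributed according to $\nu_{\rho_0,\rho}$ independently, the component $\varphi=\delta_{\omega(\vecnull)}$ and the component $\Xi_0=\sum_{\vecm\in\ZZ^d\setminus\{\vecnull\}}\delta_{(\vecm M,\omega(\vecm))}$. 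Hence
\begin{equation}
(\varphi,\Xi_0)(B_0\times A\times B) = \1\bigl(\omega(\vecnull)\in B_0\bigr)\sum_{\vecm\in\ZZ^d\setminus\{\vecnull\}} \1\bigl(\vecm M\in A\bigr)\,\1\bigl(\omega(\vecm)\in B\bigr).
\end{equation}

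The key step is then to exploit the independence structure of $\nu_{\rho_0,\rho}$: the random variable $\omega(\vecnull)$ (with law $\rho_0$) is independent of the family $\{\omega(\vecm):\vecm\neq\vecnull\}$ (i.i.d.\ with law $\rho$), and both are independent of $M$. Taking expectations and using independence to factor, the $\1(\omega(\vecnull)\in B_0)$ term contributes $\rho_0(B_0)$, and for each fixed $\vecm\neq\vecnull$ the $\1(\omega(\vecm)\in B)$ term contributes $\rho(B)$, so that
\begin{equation}
\EE(\varphi,\Xi_0)(B_0\times A\times B) = \rho_0(B_0)\,\rho(B)\;\EE\!\sum_{\vecm\in\ZZ^d\setminus\{\vecnull\}}\1\bigl(\vecm M\in A\bigr) = \rho_0(B_0)\,\rho(B)\;\EE\,\Theta_0 A,
\end{equation}
where $\Theta_0=\iota_0(\zeta_0)$ is the point process associated to the underlying (unmarked) lattice $\zeta_0\in X_0$ distributed according to $\mu_0$, i.e.\ with the origin removed. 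By Siegel's formula (as recalled after Corollary \ref{cor:ms_special}), $\EE\,\Theta_0 A = \leb(A)$. This yields $\EE(\varphi,\Xi_0)(B_0\times A\times B) = \rho_0(B_0)\,\leb(A)\,\rho(B) = (\rho_0\times\leb\times\rho)(B_0\times A\times B)$, which extends to all Borel $D$ by the monotone class theorem.

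I do not anticipate a serious obstacle here; the only point requiring a little care is the interchange of expectation and the (infinite) sum over $\vecm$, which is justified by Tonelli's theorem since all summands are nonnegative, together with the finiteness of $\EE\,\Theta_0 A=\leb(A)$ for bounded $A$ — and for unbounded $A$ both sides are simply $+\infty$ unless $\rho_0(B_0)\rho(B)=0$, so the identity still holds. One should also note explicitly that, unlike in Lemma \ref{lem:fst}, here $\vecxi=\vecnull$ and the relevant Siegel-type identity is the one for $\iota_0$ (with the origin excised), which is exactly the version quoted in the discussion preceding Lemma \ref{lem:onepoint}; this is why $\nu_{\rho_0,\rho}$ was set up to be $\Gamma_0$-invariant rather than $\Gamma$-invariant.
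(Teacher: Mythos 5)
Your proposal is correct and follows essentially the same route as the paper: reduce to product sets $D=B_0\times A\times B$, use the independence of $\omega(\vecnull)$ (law $\rho_0$), the remaining marks (i.i.d.\ law $\rho$) and the lattice to factor the expectation as $\rho_0(B_0)\,\rho(B)\,\EE\Theta_0 A$, and conclude with Siegel's formula $\EE\Theta_0 A=\leb A$. The extra details you supply (Tonelli for the sum--expectation interchange, monotone class extension) are harmless elaborations of what the paper leaves implicit.
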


\begin{proof}
It is sufficient to consider $D=B_0\times A\times B$ with $A\in\curB(\RR^d)$ and $B_0,B\in\curB(Y)$. Since the $\eta(\vecm)$, $\eta(\vecnull)$ are independent with law $\rho$ and $\rho_0$ respectively, we have $\Xi D= \rho_0 (B_0)\Theta_0 (A)  \rho (B)$. The expectation is $\EE \Xi D= \rho_0 (B_0)\EE(\Theta_0 A)  \rho (B)$, and the claim follows from Siegel's formula $\EE\Theta_0 A=\leb A$.
\end{proof}

\section{Spherical averages in the space of marked lattices: convergence on average}\label{sec:main0}

Let $t\in\R$, $M\in\group$, $\vecxi\in\RR^d$, $\omega\in\Omega$, $U\subset\RR^{d-1}$ a bounded set with measure zero boundary (as in Section \ref{sec:lattice}), $\lambda$ an absolutely continuous Borel probability measure on $U$, and $\nu$ a probability measure on $\Omega$ defined by the random field $\eta$.

We define the Borel probability measures $\bP_t^\omega=\bP_t^{(\vecxi,M,\omega,\lambda)}$ and $\bQ_t=\bQ_t^{(\vecxi,M,\nu,\lambda)}$ on $\scrX_\vecxi$ by
\begin{align}\label{eq:measure_def_omega}
\bP_t^\omega f 
&=\int_{\vecu\in U} f(  \Gamma ((\1,\vecxi)M E(\vecu) \Phi^t, \omega )) \lambda(d\vecu) ,
\end{align}
\begin{align}\label{eq:measure_def_omega2}
\bQ_t f 
&= \int_{\omega' \in\Omega} \int_{\vecu\in U} f(   \Gamma ((\1,\vecxi)M E(\vecu) \Phi^t, \omega') ) \lambda(d\vecu) d\nu(\omega'),
\end{align}
for bounded continuous functions $f:\scrX_\vecxi\to\RR$. The principal result of this paper is that $\bP_t^\omega$ converges weakly to $\tmu_{\vecxi,\rho}$ (if $\vecxi\notin\ZZ^d$) or $\tmu_{0,\rho_0,\rho}$ (if $\vecxi\in\ZZ^d$) for $\nu$-almost every $\omega$ (Theorem \ref{th:fixed_omega}). We will first prove this fact for the averaged $\bQ_t$.

\begin{theorem}\label{th:average}
Assume the random field $\eta$ is mixing of all orders with asymptotic distribution $\rho$. Then, for $t\to\infty$,
\begin{equation}
Q_t \toweak 
\begin{cases}
\tmu_{\vecxi,\rho} & (\vecxi\notin\ZZ^d)\\
\tmu_{0,\rho_0,\rho} & (\vecxi\in\ZZ^d),
\end{cases}
\end{equation}
where $\rho_0$ is the law of $\eta(-\vecxi)$.
\end{theorem}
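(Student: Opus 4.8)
The plan is to pass to the marked point processes attached to the measures $Q_t$ and verify Kallenberg's criterion for their convergence in distribution, which reduces the statement to a computation of finite-dimensional distributions that decouples into a Marklof--Str\"ombergsson part (the underlying lattice) and a mixing part (the marks).

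Let $\zeta_t$ be a random element of $\scrX_\vecxi$ with law $Q_t$; by \eqref{eq:measure_def_omega2} we may write $\zeta_t=\Gamma\big((\1,\vecxi)M E(\vecu)\Phi^t,\omega'\big)$ with $(\vecu,\omega')\sim\lambda\times\nu$, so $\vecu$ and the field $\eta:=\omega'$ are independent. By Proposition~\ref{topprop2} and the continuous mapping theorem, the claimed weak convergence of $Q_t$ is equivalent to $\Xi_t:=\kappa(\zeta_t)\todist\Xi$ in $\scrM(\RR^d\times Y)$ when $\vecxi\notin\ZZ^d$, and to $(\varphi_t,\Xi_{0,t}):=\kappa_0(\zeta_t)\todist(\varphi,\Xi_0)$ when $\vecxi\in\ZZ^d$, where the limiting objects are the marked point processes attached to $\tmu_{\vecxi,\rho}$, resp.\ $\tmu_{0,\rho_0,\rho}$. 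By Lemmas~\ref{lem:fst} and~\ref{lem:scd} the intensity of $\Xi$ (resp.\ $\Xi_0$) is $\leb\times\rho$ (resp.\ $\rho_0\times\leb\times\rho$), so every box $D=A\times B$ with $\leb(\d A)=0$ and $\rho(\d B)=0$ is a $\Xi$-continuity set; such boxes form a dissecting semiring, and hence by \cite[Theorem~16.16]{Kallenberg02} it suffices to prove
\[
(\Xi_t D_1,\ldots,\Xi_t D_n)\ \todist\ (\Xi D_1,\ldots,\Xi D_n)
\]
for all finite families of such boxes $D_i=A_i\times B_i$ (and the analogous statement carrying along the point-mass coordinate $\varphi_t=\delta_{\eta(-\vecxi)}$, whose law is by definition the fixed measure $\rho_0$).

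To compute these, condition on $\vecu$. Writing $g=g(\vecu)$ for the relevant element of $G$ or $G_0$ (cf.\ the formula for $\kappa_0$ displayed before Proposition~\ref{topprop2}), one has $\Xi_t D_i=\sum_{\vecm}\1(\vecm g\in A_i)\,\1(\eta(\vecm+\vecc)\in B_i)$ for a fixed $\vecc\in\ZZ^d$ depending on $\vecxi$ (the term $\vecm=\vecnull$ omitted when $\vecxi\in\ZZ^d$), and the number of $\vecm$ with $\vecm g\in A_i$ is the Marklof--Str\"ombergsson count, whose joint law over $i$ as $\vecu\sim\lambda$ is that of $(\Theta_t A_1,\ldots,\Theta_t A_n)$, resp.\ $(\Theta_{0,t}A_1,\ldots,\Theta_{0,t}A_n)$. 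The key geometric fact is that, for every fixed $R>0$,
\[
\lambda\big(\{\vecu\in U:\ \text{two contributing indices lie within distance }R,\ \text{or a contributing index lies within }R\text{ of }F\}\big)\xrightarrow[t\to\infty]{}0,
\]
where ``contributing'' means $\vecm g\in\bigcup_iA_i$ and $F\subset\RR^d$ is a fixed finite set determined by $\vecxi$ ($F=\{\vecnull\}$ if $\vecxi\notin\ZZ^d$, $F=\{\vecnull,\vecxi\}$ if $\vecxi\in\ZZ^d$). Granting this, fix a truncation level $L_0$ and also discard those $\vecu$ for which more than $L_0$ indices contribute; by Lemma~\ref{lem:decay} together with Corollary~\ref{cor:ms_special}, the $\lambda$-measure of the union of the bad set and this event is at most $\PP\big(\Theta(\bigcup_iA_i)\ge L_0\big)+o_{t\to\infty}(1)$. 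Off this union there are at most $L_0$ contributing indices, pairwise at distance $>R$ and at distance $>R$ from $F$; applying mixing of order $\le L_0+1$ (to $\eta$ at the shifted contributing indices, together with the point-mass index $-\vecxi$ when $\vecxi\in\ZZ^d$) and the asymptotic distribution $\rho$, the conditional law of $(\Xi_tD_1,\ldots,\Xi_tD_n)$ given $\vecu$ equals, up to a total-variation error $O_{L_0}\!\big(\vartheta_{L_0+1}(R)+\beta_{\vecnull}(R)\big)$ uniform in $\vecu$, the law obtained by marking each contributing point independently with law $\rho$; this conditional law is an explicit $[0,1]$-valued function of the count vector. Integrating over $\vecu$ and letting $t\to\infty$ (passing the bounded function of the count vector to the limit via Corollary~\ref{cor:ms_special}, and killing the bad set by the geometric fact), then $R\to\infty$, then $L_0\to\infty$, yields the displayed convergence, and the limit is precisely $\PP(\Xi D_1=\cdot,\ldots,\Xi D_n=\cdot)$ because $\tmu_{\vecxi,\rho}$ (resp.\ $\tmu_{0,\rho_0,\rho}$) is, under $\kappa$ (resp.\ $\kappa_0$), a $\tmu_\vecxi$-random lattice (resp.\ a $\mu_0$-random lattice with a $\rho_0$-marked origin) equipped with an independent i.i.d.\ $\rho$-marking.

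The main obstacle is the geometric fact, which is where the expansion/contraction of $\Phi^t$ enters. One argues as follows: if $\vecm g$ and $\vecm' g$ both lie in the fixed bounded set $\bigcup_iA_i$, then $(\vecm-\vecm')M E(\vecu)=\big(\vecm g-\vecm' g\big)\Phi^{-t}$ lies in the image under $\Phi^{-t}$ of a fixed ball, which for $t$ large is contained in a slab of thickness $O(\e^{-t})$ about $\RR\vece_1$; as $\vecm-\vecm'$ runs over the finitely many nonzero vectors of $\ZZ^d$ of norm $\le R$, this forces $\widetilde E(\vecu)=\vece_1E(\vecu)^{-1}$ to lie within $O(\e^{-t})$ of one of finitely many fixed unit vectors, and since $\widetilde E^{-1}$ is uniformly Lipschitz (see \eqref{Etilde}) and $\lambda\ll\leb$, the $\lambda$-measure of such $\vecu$ tends to $0$. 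The same computation applied to a single contributing index $\vecm$ with $\dist(\vecm,F)\le R$ (where $\vecm g$ is a fixed affine-linear image of $\vecm$ applied to $E(\vecu)\Phi^t$) shows that, for $\vecm g$ to stay in a fixed ball, $\widetilde E(\vecu)$ must again lie near one of finitely many fixed directions — the sole exception being the index $\vecm=\vecnull$ in the case $\vecxi\in\ZZ^d$, which is exactly the distinguished lattice point that is removed in the definition of $\kappa_0$. The remaining ingredients — the uniform mixing estimates, the tail bound of Lemma~\ref{lem:decay}, and the limit of Corollary~\ref{cor:ms_special} — are then routine to assemble.
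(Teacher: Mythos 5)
Your argument is correct, and its skeleton is the paper's: reduce via Kallenberg's theorem to finite-dimensional distributions of the marked point processes, condition on the direction $\vecu$, identify the count vector with the Marklof--Str\"ombergsson counts of Corollary~\ref{cor:ms_special}, truncate with Lemma~\ref{lem:decay}, and decouple the marks by higher-order mixing plus the asymptotic distribution. Where you genuinely depart from the paper is in the decisive technical step, namely how separation of the marked indices is secured and how the exceptional directions are controlled. The paper requires the contributing indices to be separated at the growing scale $c_M\eps e^{(d-1)t}$ (so that the $\vartheta_{nL}$ and $\beta_\vecxi$ errors vanish as $t\to\infty$ for fixed $\eps$), and must then show that the complementary set $U_1^{(\eps)}$, which has non-vanishing limiting measure, is small as $\eps\to 0$; this is exactly where the quantitative two-points-in-a-thin-cylinder estimates of \cite[Lemmas 7.12, 7.13]{marklof_strombergsson_free_path_length_2010} enter, and it forces the preliminary reduction to convex $A_i$ whose closure avoids $\{x_1=0\}$ (which also supplies the distance-from-$-\vecxi$ bound \eqref{cAM}). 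You instead ask only for separation at a fixed scale $R$ in index space, plus distance $>R$ from the distinguished indices; for fixed $R$ the bad directions force $\widetilde E(\vecu)$ into $O_R(e^{-t})$-neighbourhoods of finitely many fixed unit vectors (determined by the difference vectors $(\vecm-\vecm')M$ and the finitely many indices near $F$), so their $\lambda$-measure tends to $0$ by the Lipschitz property of $\widetilde E^{-1}$ and $\lambda\ll\leb$ -- a soft argument needing no input beyond boundedness of the $A_i$. The price is that the errors $\vartheta_{L_0+1}(R)+\beta_{\vecnull}(R)$ do not vanish in $t$ and must be removed by the extra limit $R\to\infty$ after $t\to\infty$ (then $L_0\to\infty$), which is perfectly legitimate for this purely qualitative statement. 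So your route buys independence from the external estimates of \cite{marklof_strombergsson_free_path_length_2010} and from the hyperplane/convexity reduction, at the cost of any quantitative content, which the paper's growing-separation scheme is closer in spirit to (and which resurfaces in Section~\ref{sec:main}). Two small bookkeeping points, of the same kind the paper also glosses: as in the paper's proof you should first take the $A_i$ pairwise disjoint (otherwise the conditional law of $(\Xi_t D_1,\ldots,\Xi_t D_n)$ is a function of the counts on the partition atoms rather than of $(l_1,\ldots,l_n)$), and your finite set $F$ should be stated consistently with the shift $\vecc$ (far from $\vecnull$ versus far from $-\vecxi$ differ by a bounded amount, absorbed into $R$, so nothing breaks).
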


Let $\zeta_t$ be the random element distributed according to $Q_t$, and $\zeta$ according to $\tmu_{\vecxi,\rho}$. Theorem \ref{th:average} $(\vecxi\notin\ZZ^d)$ can then be rephrased as $\zeta_t \todist \zeta$. Similarly, for $\vecxi\in\ZZ^d$, let $\zeta_t$ be the random element distributed according to $Q_t$, and $\zeta$ according to $\tmu_{0,\rho_0,\rho}$. Theorem \ref{th:average} $(\vecxi\in\ZZ^d)$ can then be expressed as well as $\zeta_t \todist \zeta$.
In view of Proposition \ref{topprop1} and the continuous mapping theorem \cite[Theorem 4.27]{Kallenberg02}, this is equivalent to the following convergence in distribution for the random measures $\Xi_t=\kappa(\zeta_t)$ and $\Xi=\kappa(\zeta)$ $(\vecxi\notin\ZZ^d)$, $(\varphi_t,\Xi_{0,t})=\kappa_0(\zeta_t)$ and $(\varphi,\Xi_0)=\kappa_0(\zeta)$ $(\vecxi\in\ZZ^d)$.

\begin{theorem}\label{thm:ppoint22}
Assume $\eta$ is mixing of all orders with asymptotic distribution $\rho$. Then, for $t\to\infty$,
\begin{equation}
\Xi_t \todist \Xi \quad (\vecxi\notin\ZZ^d), \qquad (\varphi_t,\Xi_{0,t}) \todist (\varphi,\Xi_0) \quad (\vecxi\in\ZZ^d).
\end{equation}
\end{theorem}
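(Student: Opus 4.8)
The plan is to reduce convergence in distribution of the random measures to the convergence of finitely many "test" quantities — namely linear functionals $\Xi_t f$ against $f\in C_c(\RR^d\times Y)$ (and the pair analogue for $\vecxi\in\ZZ^d$) — and to compute the limiting Laplace functional using the mixing hypothesis. Concretely, I would invoke the standard criterion for weak convergence of random measures in $\scrM(\RR^d\times Y)$: $\Xi_t\todist\Xi$ if and only if $\EE\,\e^{-\Xi_t f}\to\EE\,\e^{-\Xi f}$ for all nonnegative $f\in C_c(\RR^d\times Y)$ (see \cite[Theorem 16.16]{Kallenberg02}), together with the corresponding statement for product measures in $\scrM(Y)\times\scrM(\RR^d\times Y)$ in the case $\vecxi\in\ZZ^d$. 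By Proposition \ref{topprop2} and the continuous mapping theorem this is indeed equivalent to the stated convergence in $\scrX_\vecxi$; so everything comes down to identifying the limiting Laplace transforms.

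The first main step is to write out $\Xi_t f$ explicitly: if $\zeta_t$ is distributed according to $Q_t$, then by \eqref{eq:measure_def_omega2} and the definition of $\kappa$,
\begin{equation*}
\EE\,\e^{-\Xi_t f} = \int_\Omega \int_U \exp\!\Big( -\!\!\sum_{\vecm\in\ZZ^d} f\big(\vecm (\1,\vecxi)M E(\vecu)\Phi^t,\ \omega'(\vecm)\big)\Big)\,\lambda(d\vecu)\,d\nu(\omega').
\end{equation*}
The key observation is that, for fixed $\vecu$ and large $t$, only finitely many $\vecm$ contribute, and — crucially — those contributing lattice points $\vecm(\1,\vecxi)M E(\vecu)\Phi^t$ that lie in the support of $f$ come from integer vectors $\vecm$ that are pairwise far apart in $\ZZ^d$ (this is exactly the geometric input behind Theorem \ref{th:ms_full}: the sphere of radius $\sim\e^t$ deposits points whose preimages under the deforming flow spread out). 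Hence one can replace the $\nu$-average over $\omega'(\vecm)$ of the product $\prod \e^{-f(\cdot,\omega'(\vecm))}$ by the product of $\rho$-averages, up to an error governed by $\vartheta_k(s)$ with $s\to\infty$; here mixing of all orders (with asymptotic distribution $\rho$) is what licenses both the decoupling and the replacement of the marginal of $\eta(\vecm)$ by $\rho$ for $\vecm$ far from $-\vecxi$. After this decoupling, the $\vecu$-integral of the resulting expression is precisely the Laplace functional of the point process $\Theta_t$ (thinned/marked independently by $\rho$), which converges by Theorem \ref{thm:ppoint}; and the limit is the Laplace functional of $\Theta$ marked i.i.d.\ by $\rho$, i.e.\ of $\Xi$ distributed according to $\tmu_{\vecxi,\rho}$ by Lemma \ref{lem:fst}. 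For $\vecxi\in\ZZ^d$ one argues identically, except the point $\vecnull\in\ZZ^d$ is singled out: its mark has law $\rho_0=$ law of $\eta(-\vecxi)$, it stays at the origin under $\Phi^t$ and is tracked separately (this is the role of $\kappa_0$ and Corollary \ref{cor:ms_special}), while all other marks decouple and converge to $\rho$; the limit is $\tmu_{0,\rho_0,\rho}$ by Lemma \ref{lem:scd}.

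The main obstacle is making the decoupling estimate uniform and quantitative enough to survive the $t\to\infty$ limit. One must control, simultaneously, (i) the number of lattice points of $\ZZ^d(\1,\vecxi)ME(\vecu)\Phi^t$ in a fixed compact set — for which Lemma \ref{lem:decay} gives tail bounds that are uniform in $t$ — and (ii) the minimal pairwise separation $s=s(t)\to\infty$ in $\ZZ^d$ of the integer labels of those points; then the error from replacing the joint law of $(\eta(\vecm_1),\dots,\eta(\vecm_k))$ by the product $\rho^{\otimes k}$ is at most (number of relevant $k$-tuples)$\times(\vartheta_k(s)+k\,\beta_\vecnull(s'))$ for suitable $s'\to\infty$, which tends to $0$ once one has the polynomial moment bounds of Lemma \ref{lem:decay} to handle the sum over $k$. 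The slightly delicate point is that the separation $s(t)$ may fail to go to infinity on a small set of $\vecu$ (lattices with short vectors after deformation), so one splits the $\vecu$-integral into a good set, where the above estimate applies, and a bad set, whose $\lambda$-measure is shown to vanish in the limit using absolute continuity of $\lambda$ together with the same short-vector estimates; on the bad set one simply bounds $\e^{-\Xi_t f}\le 1$. Assembling these pieces yields convergence of the Laplace functionals and hence the theorem.
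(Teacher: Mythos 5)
Your proposal is correct in outline and follows essentially the same route as the paper: reduce via Proposition \ref{topprop2}, the continuous mapping theorem and \cite[Theorem 16.16]{Kallenberg02} to a convergence criterion for the random measures, split the directions $\vecu$ into a good set where the integer labels of contributing points are $\gg \e^{(d-1)t}$-separated (and far from $-\vecxi$) and a small bad set controlled by the thin-slab estimates behind Corollary \ref{cor:ms_special}, decouple the marks by mixing of all orders together with $\beta_\vecxi$, truncate the number of points via Lemma \ref{lem:decay}, and identify the limit through Lemmas \ref{lem:fst} and \ref{lem:scd}. The only difference is that you test with Laplace functionals while the paper works with finite-dimensional counting distributions (Propositions \ref{prop:ms_special22} and \ref{prop:ms_special22b}), which are interchangeable criteria under the same theorem of Kallenberg.
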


Theorem \ref{thm:ppoint22} (and hence Theorem \ref{th:average}) follows from the convergence of finite-dimensional distributions by \cite[Theorem 16.16]{Kallenberg02} stated in the following propositions.

\begin{prop}\label{prop:ms_special22}
Let $\vecxi\notin\ZZ^d$, and assume $\eta$ is mixing of all orders with asymptotic distribution $\rho$. Let $n\in\NN$, $D_1,\ldots,D_n\in\curB(\RR^d\times Y)$ bounded with $\Xi\d D_i = 0$ almost surely for all $i$. Then, for $t\to\infty$,
\begin{equation}
(\Xi_t D_1,\ldots,\Xi_t D_n) \todist (\Xi D_1,\ldots,\Xi D_n) .
\end{equation}
\end{prop}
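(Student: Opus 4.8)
The plan is to reduce the statement to Corollary \ref{cor:ms_special} (finite-dimensional convergence for the unmarked point process $\Theta_t$) combined with an independent treatment of the marks, exploiting that the marks are produced by a field that is mixing of all orders. First I would reduce to the case where each $D_i$ is a product box $A_i\times B_i$ with $A_i\in\curB(\RR^d)$, $B_i\in\curB(Y)$, $\leb(\d A_i)=0$ and $\rho(\d B_i)=0$: by a standard approximation argument (a $\pi$--$\lambda$ / monotone-class reduction, using that such boxes generate $\curB(\RR^d\times Y)$ and that $\Xi D=(\leb\times\rho)D$ in expectation by Lemma \ref{lem:fst}, hence $\Xi\d D=0$ a.s.\ iff $(\leb\times\rho)\d D=0$), and then using additivity of $\Xi_t$ on disjoint boxes, it suffices to prove joint convergence of $(\Xi_t(A_i\times B_i))_{i}$ for finitely many such boxes; one may further assume the $A_i$ are chosen from a common fine partition so all relevant intersections are again boxes of this type.

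Second, fix a realization at time $t$: the point configuration is $\ZZ^d (\1,\vecxi)ME(\vecu)\Phi^t$ and the mark attached to the lattice point $\vecm(\1,\vecxi)ME(\vecu)\Phi^t$ is $\eta(\vecm)$ (i.e.\ $\omega(\vecm)$ with $\omega$ drawn from $\nu$), so
\begin{equation}
\Xi_t(A_i\times B_i) \;=\; \sum_{\vecm\in\ZZ^d} \ind{\vecm(\1,\vecxi)ME(\vecu)\Phi^t\in A_i}\,\ind{\eta(\vecm)\in B_i}.
\end{equation}
I would compute the joint characteristic function (or joint factorial moments / joint probability generating function) of $(\Xi_t(A_i\times B_i))_i$ by conditioning on $\vecu$ and on the finite set $S=S(\vecu,t)$ of indices $\vecm$ whose image lands in $\bigcup_i A_i$. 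For a fixed such finite index set $S$ of size $N$, the mark contributions are governed by $(\eta(\vecm))_{\vecm\in S}$; the mixing-of-all-orders hypothesis, together with the fact that at large $t$ the surviving indices $\vecm\in S$ are pairwise separated by a distance tending to infinity (because $\Phi^t$ expands the last $d-1$ coordinates and the sphere $\widetilde E(U)$ stays in an open hemisphere, so two distinct lattice points in a fixed bounded target region have pre-images whose separation $\to\infty$ uniformly), lets me replace the joint law of $(\ind{\eta(\vecm)\in B_i})_{\vecm\in S}$ by the product of its marginals up to an error bounded by $N(N-1)\,\cdot(\text{number of box-colours})\cdot\vartheta_{|S|}(s(t))$ with $s(t)\to\infty$; the marginals in turn converge to $\rho(B_i)$ by the asymptotic-distribution hypothesis ($\beta_\vecnull(s)\to0$, noting the $\vecm$'s escape every finite region). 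This makes the conditional-on-$(\vecu,S)$ law asymptotically that of placing each surviving point independently into colour class $i$ with probability $\rho(B_i)$ — exactly the structure of the limit $\Xi$, whose marks are i.i.d.\ $\rho$ by construction of $\tmu_{\vecxi,\rho}$.

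Third, I would control the passage to the limit uniformly: the tail bound Lemma \ref{lem:decay} ($\PP(\Theta_t(\bigcup_i A_i)\ge L)\ll (1+L)^{-d-1}$, using that it holds for $\Theta$ and that it is inherited along the weakly convergent sequence $\Theta_t\todist\Theta$ — or directly from the $\SL(d,\R)$ setting) gives uniform integrability and shows the error terms $N^2\vartheta_N(s(t))$ are negligible after taking expectations over $N=|S|$, since $\sum_N N^2\vartheta_N(s)\PP(|S|=N)\to 0$. Combining: the characteristic function of $(\Xi_t(A_i\times B_i))_i$ equals $\EE_{\vecu}\EE_{S}[\text{(mark factor)}] + o(1)$, the mark factor converges to $\prod$ over surviving points of $\sum_i e^{\i\theta_i}\ind{\text{point in }A_i}\rho(B_i)$ structure, and $\EE_\vecu\EE_S$ of this limiting expression is precisely the characteristic function of $(\Xi(A_i\times B_i))_i$ by Corollary \ref{cor:ms_special} applied to the unmarked configuration (the colouring is an independent thinning commuting with the weak limit). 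Hence the joint convergence follows.

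The main obstacle I expect is the uniformity of the separation-and-mixing estimate: one must show that the random error from replacing dependent marks by independent ones is $o(1)$ after averaging over $\vecu$ and the random number of surviving lattice points, which requires coupling the growth of the minimal pairwise separation $s(t)\to\infty$ with the decay of $\vartheta_k(s)$ and simultaneously controlling the (random, unbounded) number of marks involved via the polynomial tail of Lemma \ref{lem:decay}. Making this quantitative — ensuring $s(t)\to\infty$ uniformly for $\vecu$ in the support of $\lambda$ while $k=|S|$ ranges over all of $\ZZ_{\ge0}$ — is where the bulk of the technical work lies; everything else is bookkeeping with characteristic functions and the continuous mapping theorem.
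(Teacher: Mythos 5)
Your overall strategy---reduce to product boxes via Lemma \ref{lem:fst}, condition on the unmarked configuration, factorize the marks using mixing of all orders, identify the marginals via the asymptotic distribution, truncate the number of points with Lemma \ref{lem:decay}, and conclude by Corollary \ref{cor:ms_special}---is essentially the paper's. But there is a genuine gap at the step you lean on most: the claim that two distinct lattice points whose images land in the fixed bounded set $A=\bigcup_i A_i$ automatically have preimages whose separation tends to infinity, uniformly in $\vecu$. This is false. If $\vecq_j=\vecm_j(\1,\vecxi)ME(\vecu)\Phi^t\in A$, the only available lower bound is $\|\vecm_1-\vecm_2\|\ge c_M e^{(d-1)t}\,|(\vecq_1-\vecq_2)\cdot\vece_1|$: the expansion of the last $d-1$ coordinates by $\Phi^t$ forces $(\vecm_1-\vecm_2)ME(\vecu)$ to be nearly parallel to $\vece_1$, but its \emph{length} is controlled only by the difference of the first coordinates of $\vecq_1,\vecq_2$, which can be arbitrarily small; for every $t$ there is a positive-$\lambda$-measure set of directions $\vecu$ for which some pair of points of $\scrL_{t,\vecu}$ lands in $A$ with $|(\vecq_1-\vecq_2)\cdot\vece_1|$ tiny and hence with $\|\vecm_1-\vecm_2\|$ of bounded size. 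The hemisphere assumption only guarantees that each contributing $\vecm$ is far from $-\vecxi$ (which is what makes the marginals converge to $\rho$), not that contributing points are far from each other. The paper handles precisely this by splitting $U=U_1^{(\eps)}\cup U_2^{(\eps)}$ according to whether two points of $A\cap\scrL_{t,\vecu}$ have first coordinates within $\eps$, applying higher-order mixing only on $U_2^{(\eps)}$ (where separation is $\ge c_M\eps e^{(d-1)t}$), and then proving $E^{(\eps)}=\limsup_t\lambda(U_1^{(\eps)})\to0$ as $\eps\to0$. That last step is not soft: it requires the quantitative estimates for the probability that the limiting random lattice has two points in a thin slab inside a cylinder (Lemmas 7.12 and 7.13 of \cite{marklof_strombergsson_free_path_length_2010}, with an extra logarithm when $d=2$, $\vecxi\in\QQ^2$), summed over $O(1/\eps)$ slabs. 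Nothing in your outline supplies this ingredient; the ``uniformity of the separation'' you defer to the final paragraph is the mathematical heart of the proof, not bookkeeping.

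A second, smaller defect: your error term $\sum_N N^2\vartheta_N\big(s(t)\big)\PP(|S|=N)$ is not known to vanish, since mixing of all orders gives $\vartheta_k(s)\to0$ only for each \emph{fixed} $k$, with no uniformity in $k$. One must first truncate the number of contributing points at a fixed level $L$ (via Corollary \ref{cor:ms_special} and Lemma \ref{lem:decay}, at the cost of $O((1+L)^{-d})$), apply mixing of order $nL$ with $L$ fixed as $t\to\infty$, and only afterwards let $L\to\infty$; this order of limits, which the paper follows, is what makes the mixing error controllable.
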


\begin{prop}\label{prop:ms_special22b}
Let $\vecxi\in\ZZ^d$, and assume $\eta$ is mixing of all orders with asymptotic distribution $\rho$. Let $n\in\NN$, $D_1,\ldots,D_n\in\curB(\RR^d\times Y)$ bounded with $\Xi_0\d D_i = 0$ almost surely for all $i$, and $B_0\in\curB(Y)$ with $\varphi\d B_0=0$ almost surely. Then, for $t\to\infty$,
\begin{equation}
(\varphi_t B_0,\Xi_{0,t} D_1,\ldots,\Xi_{0,t} D_n) \todist (\varphi B_0,\Xi_0 D_1,\ldots,\Xi_0 D_n) .
\end{equation}
\end{prop}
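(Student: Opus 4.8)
The plan is to establish the stated convergence of finite‑dimensional distributions by reducing, via the mixing hypothesis, to an independent reference field, and then deducing that case from Corollary~\ref{cor:ms_special}. First, by \cite[Theorem~16.16]{Kallenberg02}, additivity of counting measures, and the Siegel‑type identity of Lemma~\ref{lem:scd} (which identifies the null boundaries so that general bounded Borel sets may be approximated by a generating semiring), it is enough to treat product sets $D_i=A_i\times B_i$ with $A_i\subset\RR^d$ bounded, $\leb(\d A_i)=0$, and $B_i,B_0\in\curB(Y)$ with $\rho(\d B_i)=0$, $\rho_0(\d B_0)=0$. Writing $g_t(\vecu)=ME(\vecu)\Phi^t$ and using $(\1,\vecxi)\in\Gamma$ for $\vecxi\in\ZZ^d$, unwinding the identification $\scrX_\vecxi\simeq\scrX_\vecnull$ and the map $\kappa_0$ shows that a random element of $\scrX_\vecxi$ distributed according to $\bQ_t$ produces the random vector with
\[
\varphi_t B_0=\1\big(\omega(-\vecxi)\in B_0\big),\qquad
\Xi_{0,t}D_i=\sum_{\vecm\in S_i^{(t)}(\vecu)\setminus\{-\vecxi\}}\1\big(\omega(\vecm)\in B_i\big),
\]
where $S_i^{(t)}(\vecu)=\{\vecm\in\ZZ^d:(\vecm+\vecxi)g_t(\vecu)\in A_i\}$, and $\vecu\sim\lambda$, $\omega\sim\nu$ are independent.

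The geometric core is the claim that for $\lambda$-almost every $\vecu$ the finite set $V^{(t)}(\vecu):=\{-\vecxi\}\cup\bigcup_{i}S_i^{(t)}(\vecu)$ has minimal inter‑point distance $r_t(\vecu)\to\infty$ as $t\to\infty$. Indeed, if $\vecm_1\neq\vecm_2$ lie in $V^{(t)}(\vecu)$ then $\vecm_1-\vecm_2\in\ZZ^d\setminus\{\vecnull\}$ and $(\vecm_1-\vecm_2)g_t(\vecu)$ lies in a fixed bounded set (a difference of the $A_i\cup\{\vecnull\}$); applying $\Phi^{-t}$ shows $(\vecm_1-\vecm_2)ME(\vecu)$ has first coordinate $O(e^{(d-1)t})$ and all remaining coordinates $O(e^{-t})$, so $\vecm_1-\vecm_2$ lies within $O(e^{-t})$ of the line $\RR\,\vecv(\vecu)$ with $\vecv(\vecu):=\vece_1E(\vecu)^{-1}M^{-1}$. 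For $\lambda$-a.e.\ $\vecu$ this is an irrational direction (not parallel to any nonzero integer vector), since rational directions form a countable set while $\vecu\mapsto\vecv(\vecu)$ has Lipschitz inverse and $\lambda\ll\leb$; for such $\vecu$ and any $R$, the finitely many nonzero integer vectors of norm $\le R$ are all at positive distance from $\RR\,\vecv(\vecu)$, so none can equal $\vecm_1-\vecm_2$ once $t$ is large, whence $r_t(\vecu)>R$. Letting $R\to\infty$ proves the claim.

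With the spreading in hand, the assumption that $\eta$ is mixing of all orders with asymptotic distribution $\rho$ lets us decouple. Conditionally on $\vecu$, the vector above is a bounded deterministic function of $(\omega(\vecp))_{\vecp\in V^{(t)}(\vecu)}$; since all pairwise separations in $V^{(t)}(\vecu)$ are at least $r_t(\vecu)$ and the distinguished site is $-\vecxi$, replacing the joint law of these values by the product of their marginals — which is $\rho_0$ at $-\vecxi$ exactly and within $\beta_\vecnull$ of $\rho$ at every other site — introduces an error at most $\vartheta_{|V^{(t)}(\vecu)|}(r_t(\vecu))+|V^{(t)}(\vecu)|\,\beta_\vecnull(r_t(\vecu)-\|\vecxi\|)$. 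To cope with the unbounded number of factors we truncate at level $N$: the contribution of $\{|V^{(t)}(\vecu)|\le N\}$ is at most $\vartheta_N(r_t(\vecu))+N\beta_\vecnull(r_t(\vecu)-\|\vecxi\|)$, whose $\lambda$-integral tends to $0$ as $t\to\infty$ by dominated convergence, while $\lambda\{|V^{(t)}(\vecu)|>N\}\le N^{-1}\sum_i\EE_\lambda[\#S_i^{(t)}(\vecu)]=O(N^{-1})$ uniformly in $t$ by $t$-uniform moment bounds for $\Theta_{0,t}A_i$ as in Lemma~\ref{lem:decay} (cf.~\cite{marklof_2000,marklof_strombergsson_free_path_length_2010}). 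Sending $t\to\infty$ and then $N\to\infty$, the $\bQ_t$-expectation of every bounded continuous functional of the vector converges to the corresponding expectation computed with $\eta$ replaced by the independent field having law $\rho_0$ at $-\vecxi$ and $\rho$ elsewhere.

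For that independent reference field the marks do not depend on $\vecu$, so $(\varphi_t,\Xi_{0,t})$ is exactly the point process $\Theta_{0,t}$ of Corollary~\ref{cor:ms_special} with the (deterministic) origin peeled off as a $\delta$-mass carrying an independent $\rho_0$-mark and each remaining point given an independent $\rho$-mark. Since $\Theta_{0,t}\todist\Theta_0$ (Theorem~\ref{thm:ppoint}) and the independently marked process depends continuously in law on the underlying process \cite{Kallenberg02}, we obtain $(\varphi_t,\Xi_{0,t})\todist(\varphi,\Xi_0)$ under $\tmu_{0,\rho_0,\rho}$; the hypotheses $\Xi_0\d D_i=0$ and $\varphi\d B_0=0$ a.s.\ then give $(\varphi_t B_0,\Xi_{0,t}D_1,\dots,\Xi_{0,t}D_n)\todist(\varphi B_0,\Xi_0 D_1,\dots,\Xi_0 D_n)$ by the continuous mapping theorem. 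I expect the decoupling step to be the main obstacle — specifically obtaining the $t$-uniform moment bound on $\#\bigcup_i S_i^{(t)}(\vecu)$ that allows $N$ to be sent to infinity after $t$, and combining it with mixing of growing order while keeping the error controlled. The case $\vecxi\notin\ZZ^d$ of Proposition~\ref{prop:ms_special22} is handled identically, using $\kappa$, $\Theta_t$ and Lemma~\ref{lem:fst} in place of $\kappa_0$, $\Theta_{0,t}$ and Lemma~\ref{lem:scd}, and with no distinguished site.
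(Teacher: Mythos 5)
Your argument is essentially sound, but it reaches the separation of lattice sites by a genuinely different route than the paper. The paper (in the proof of Proposition \ref{prop:ms_special22}, to which Proposition \ref{prop:ms_special22b} is reduced with the extra distinguished site $-\vecxi$ and mixing of order $nL+1$) proceeds quantitatively: it splits $U$ into $U_1^{(\eps)}$ and $U_2^{(\eps)}$ according to whether two points of $A\cap\scrL_{t,\vecu}$ have $x_1$-coordinates within $\eps$, obtains pairwise separation $\ge c_M\eps e^{(d-1)t}$ on $U_2^{(\eps)}$, and controls $\limsup_t\lambda(U_1^{(\eps)})$ via thin-cylinder estimates for the limiting lattice (Lemmas 7.12--7.13 of \cite{marklof_strombergsson_free_path_length_2010}), sending $\eps\to0$ at the end. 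You instead prove a soft pointwise statement: for $\lambda$-a.e.\ $\vecu$ the direction $\vece_1E(\vecu)^{-1}M^{-1}$ is parallel to no nonzero integer vector, so the minimal separation $r_t(\vecu)$ of the contributing sites (including $-\vecxi$) tends to infinity, and you conclude by dominated convergence in $\vecu$ after truncating the number of sites. This is correct, avoids the thin-slab estimates entirely, and gives up only a rate, which is harmless here since only mixing of all orders (with no rate) is assumed; your identification of the limit via continuity of independent marking under weak convergence replaces the paper's explicit binomial computation and is also fine, as is your treatment of the site $-\vecxi$ carrying the exact marginal $\rho_0$.

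The one step that fails as written is the tail bound for the truncation: you assert $\EE_\lambda[\#S_i^{(t)}(\vecu)]=O(1)$ uniformly in $t$, citing Lemma \ref{lem:decay}. That lemma concerns the limit process $\Theta_0$, not $\Theta_{0,t}$, and the uniform-in-$t$ first moment is in fact false for a general absolutely continuous $\lambda$: e.g.\ for $d=2$, $M=\1$, take $\lambda'$ with an integrable singularity $|u-u_0|^{-1+\eps}$, $\eps<1/2$, at a point $u_0$ whose direction $\vece_1E(u_0)^{-1}$ is rational; directions within $e^{-2t}$ of $u_0$ put order $e^{t}$ lattice points into a fixed box $A$ avoiding $\{x_1=0\}$, so $\EE_\lambda[\Theta_{0,t}A]\gg e^{(1-2\eps)t}\to\infty$. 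The repair is immediate and is exactly the paper's mechanism: since $\#V^{(t)}(\vecu)\le 1+\Theta_{0,t}(A)$ with $A=\cup_iA_i$, Corollary \ref{cor:ms_special} gives $\limsup_{t\to\infty}\lambda\{\#V^{(t)}(\vecu)>N\}\le\PP\big(\Theta_0(\overline A)\ge N-1\big)=O\big(N^{-d}\big)$ by Lemma \ref{lem:decay}; because you send $t\to\infty$ before $N\to\infty$, this bound slots into your argument without any other change. (Also note your decoupling error on $\{\#V^{(t)}\le N\}$ should carry an $N$-dependent combinatorial constant, as in the paper's $O_L$, which the same order of limits absorbs.) With these adjustments your proof is complete.
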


\begin{proof}[Proof of Proposition \ref{prop:ms_special22}]
It is sufficient to consider test sets of the form $D_i=A_i\times B_i$ with $A_i\in\curB(\RR^d)$ bounded and  $B_i\in\curB(Y)$ such that $\Xi \d (A_i\times B_i)=0$ almost surely. In view of Lemma \ref{lem:fst} the latter is equivalent to $(\leb\times\rho)\d (A_i\times B_i)=0$. We also assume without loss of generality that $A_i$ are pairwise disjoint. 

Corollary \ref{cor:ms_special}, Lemma \ref{lem:onepoint} and the Chebyshev inequality imply that for every bounded $D_0=A_0\times B_0$ we have
\begin{equation}
\limsup_{t\to\infty} \PP( \Xi_t D_0 \ge 1 ) \le \limsup_{t\to\infty} \PP( \Theta_t A_0 \ge 1 ) \le \PP(\Theta \overline A_0 \le 1) \le \leb \overline A_0 .
\end{equation}
Hence sets $D_0=A_0\times B_0$ where the closure of $A_0$ has small Lebesgue measure have small probability, and we can thus remove such sets from the $D_i$. This explains why, without loss of generality, we may assume from now on that the $A_i$ are convex and that the hyperplane $\{x_1=0\}$ does not intersect the closure $\overline A$ of $A:=\cup_{i=1}^n A_i$. 
Set
\begin{equation}\label{scrL}
\scrL_{t,\vecu}=\Z^d (\1,\vecxi)M E(\vecu)\Phi^t ,
\end{equation}
and write $\vecq=\vecm(\1,\vecxi)M E(\vecu)\Phi^t\in\scrL_{t,\vecu}$ with $\vecm\in\ZZ^d$ uniquely determined by $\vecq$. 
Writing $\vece_1=(1,0,\dots,0)$ for the first standard basis vector in $\R^d$, we have
\begin{equation}\label{same0}
\vecq\cdot \vece_1 =  \vecm (\1,\vecxi) M E(\vecu)\Phi^t \cdot \vece_1
=\e^{-(d-1)t}  (\vecm+\vecxi) M E(\vecu) \cdot \vece_1
\end{equation}
and hence for some constants $c_M>0$, $c_{A,M}>0$ (depending only on $M$ resp.\ $A$ and $M$) 
\begin{equation}\label{cAM}
\big\| \vecm+\vecxi \| \ge c_M \big\| (\vecm+\vecxi) M \| \ge c_M \e^{(d-1)t} \big|\vecq \cdot \vece_1\big| \ge c_{A,M} \e^{(d-1)t} ,
\end{equation}
uniformly for all $\vecq\in A$, $t\ge 0$.

For a small parameter $\eps>0$ to be chosen later write $U=U_1^{(\eps)} \cup U_2^{(\eps)}$, where 
\begin{align}
 U_1^{(\eps)} = \left\{ \vecu\in U : \exists \vecq_1\ne \vecq_2\in A\cap\scrL_{t,\vecu}  \text{ s.t.\ }|(\vecq_1-\vecq_2)\cdot \vece_1| <\eps \right\}
\end{align}
and $U_2^{(\eps)} = U\setminus U_1^{(\eps)}.$ 
The set $U_2^{(\eps)}$ comprises directions corresponding to lattice points $\vecm\in\ZZ^d$ that are $\eps e^{(d-1)t}$-separated. That is, for  
$\vecq_1=\vecm_1(\1,\vecxi)M E(\vecu)\Phi^t$ and $\vecq_2=\vecm_2(\1,\vecxi)M E(\vecu)\Phi^t$ with $\vecu\in U_2^{(\eps)}$,  we have
\begin{equation}\label{same1}
(\vecq_1-\vecq_2)\cdot \vece_1 =  (\vecm_1-\vecm_2)M E(\vecu)\Phi^t \cdot \vece_1
=\e^{-(d-1)t}  (\vecm_1-\vecm_2) M E(\vecu) \cdot \vece_1
\end{equation}
and hence
\begin{equation}\label{cM}
\big\| \vecm_1-\vecm_2 \| \ge c_M \big\| (\vecm_1-\vecm_2) M \| \ge c_M \e^{(d-1)t} \big|(\vecq_1-\vecq_2)\cdot \vece_1\big| \ge c_M \eps \e^{(d-1)t} .
\end{equation}
We will use the higher-order mixing property to show that markings at such points become independent. The set $U_1^{(\eps)}$ includes directions in which there are some lattice points that are close. We will show that the measure of such directions tends to zero as $\eps\to0$.

For non-negative integers $r_1,\ldots, r_n$,
\begin{equation}\label{comb0}
\PP\big(\Xi_t(A_i\times B_i)=r_i \;\forall i \big) =\PP\left(\Xi_t(A_i\times B_i)=r_i \;\forall i \mid \vecu\in U_2^{(\eps)} \right)
+O\left(\lambda(U_1^{(\eps)})\right) .
\end{equation}
We deal with the first term by writing
\begin{multline}
\PP\left(\Xi_t(A_i\times B_i)=r_i \;\forall i \mid \vecu\in U_2^{(\eps)} \right) \\
=\sum_{\mathclap{l_1,\dots,l_n\ge 0}}  \PP\left(\Xi_t(A_i\times B_i)=r_i ,\; \Xi_t(A_i\times Y)=l_i \;\forall i \mid \vecu\in U_2^{(\eps)} \right) .
\end{multline}
Split the summation into terms with $\max_{i} l_i \le L$ and $\max_{i} l_i > L$ for some large $L$. For the latter,
\begin{align}
\sum_{\substack{\mathclap{l_1,\dots,l_n\ge 0}\\
\max_{i} l_i > L}}  & \PP\left(\Xi_t(A_i\times B_i)=r_i ,\; \Xi_t(A_i\times Y)=l_i \;\forall i \mid \vecu\in U_2^{(\eps)} \right) \\ & \le 
\sum_{\substack{\mathclap{l_1,\dots,l_n\ge 0}\\
\max_{i} l_i > L}}   \PP\big( \Xi_t(A_i\times Y)=l_i \;\forall i \big) \\ & \le
\PP\big( \Xi_t(A\times Y)>L  \big)  = \PP\big( \Theta_t(A)>L  \big),
\end{align}
and by Corollary \ref{cor:ms_special} there is $t_0(L,A)$ such that for all $t>t_0(L,A)$,
\begin{equation} \label{eq:large_L}
\PP\big( \Theta_t(A)>L  \big)   \le 1.01 \times \PP\big( \Theta(A)>L  \big) = O( (1+L)^{-d}),
\end{equation}
where the last bound follows from Lemma \ref{lem:decay}. We conclude that, for all $L\ge 1$,
\begin{equation}\label{comb1}
\limsup_{t\to\infty} \sum_{\substack{\mathclap{l_1,\dots,l_n\ge 0}\\
\max_{i} l_i > L}}  \PP\left(\Xi_t(A_i\times B_i)=r_i ,\; \Xi_t(A_i\times Y)=l_i \;\forall i \mid \vecu\in U_2^{(\eps)} \right) = O( (1+L)^{-d}) .
\end{equation}
Let us now turn to the remaining term
\begin{equation}\label{eq:mixing}
\sum_{0\le l_1,\dots,l_n\le L} \PP\left(\Xi_t(A_i\times B_i)=r_i ,\; \Xi_t(A_i\times Y)=l_i \;\forall i \mid \vecu\in U_2^{(\eps)} \right) .
\end{equation}
The only terms which contribute are those with $l_i\ge r_i$. We have
\begin{multline}\label{hym1}
\PP\left(\Xi_t(A_i\times B_i)=r_i ,\; \Xi_t(A_i\times Y)=l_i \;\forall i \mid \vecu\in U_2^{(\eps)} \right) \\
= \frac{1}{\lambda(U_2^{(\eps)})} \int_{U_2^{(\eps)}} \PP\left(\#\{ \vecm\in J_i : \eta(\vecm)\in B_i \}=r_i \;\forall i \mid\vecu  \right) \mathbbm{1}_{\{\#J_1= l_1 \}}\cdots \mathbbm{1}_{\{\#J_n= l_n \}}  d\lambda(\vecu),
\end{multline}
where $J_i=\scrL_{t,\vecu}\cap A_i$.
By the choice of $U_2^{(\eps)}$, all contributing lattice points are $c_M\eps e^{(d-1)t}$-separated, and so, by mixing of all orders,
\begin{multline}
\PP\left(\#\{ \vecm\in J_i : \eta(\vecm)\in B_i \}=r_i \;\forall i \mid \vecu  \right) \\
=\prod_{i=1}^n \bigg( \sum_{S\subset J_i} \bigg(\prod_{\vecm\in S}  \PP\left(\eta(\vecm)\in B_i \right)\times \prod_{\vecm\notin S}  \PP\left(\eta(\vecm)\notin B_i \right) \bigg) \bigg) \\
+ O_L\big(\vartheta_{nL}\big(c_M \eps e^{(d-1)t}\big) \big),
\end{multline}
where the sum is over all subsets $S$ of $J_i$ of cardinality $r_i$. There are $\tbinom{l_i}{r_i}$ such subsets. Again by the choice of $U_2^{(\eps)}$, all contributing lattice points are furthermore at distance at least $c_{A,M} e^{(d-1)t}$ from $\vecxi$. Since $\eta$ has the asymptotic distribution $\rho$, we therefore have  
\begin{multline}
\PP\left(\#\{ \vecm\in J_i : \eta(\vecm)\in B_i \}=r_i \;\forall i \mid \vecu  \right) 
=\prod_{i=1}^n  \tbinom{l_i}{r_i}  p_i^{r_i} (1-p_i)^{l_i-r_i}  \\
+ O_L\big(\vartheta_{nL}\big(c_M \eps e^{(d-1)t}\big)\big) + O_L\big(\beta_{\vecxi}\big( c_{A,M} e^{(d-1)t} \big) \big),
\end{multline}
where $p_i = \rho(B_i)$. Now
\begin{align}
\frac{1}{\lambda(U_2^{(\eps)})} \int_{U_2^{(\eps)}} \mathbbm{1}_{\{\#J_1= l_1 \}}\cdots \mathbbm{1}_{\{\#J_n= l_n \}}  d\lambda(\vecu)
& = \PP\left(\Xi_t(A_i\times Y)=l_i \;\forall i \mid \vecu\in U_2^{(\eps)} \right) \\
& =\PP\left(\Xi_t(A_i\times Y)=l_i \;\forall i \right) +O\left(\lambda(U_1^{(\eps)})\right),
\end{align}
and thus
\begin{multline}
\sum_{l_1=r_1}^L \cdots \sum_{l_n=r_n}^L  \PP\left(\Xi_t(A_i\times Y)=l_i \;\forall i \mid \vecu\in U_2^{(\eps)} \right) \prod_{i=1}^n \tbinom{l_i}{r_i} p_i^{r_i} (1-p_i)^{l_i-r_i}
\\ = \sum_{l_1=r_1}^L \cdots \sum_{l_n=r_n}^L 
\PP\left(\Xi_t(A_i\times Y)=l_i \;\forall i \right) \prod_{i=1}^n \tbinom{l_i}{r_i} p_i^{r_i} (1-p_i)^{l_i-r_i} +O\left(\lambda(U_1^{(\eps)})\right),
\end{multline}
where the implied constants are $l$-independent.
Therefore, using Corollary \ref{cor:ms_special}, 
\begin{multline}
\limsup_{t\to\infty} \bigg| \sum_{0\le l_1,\dots,l_n\le L}  \PP\left(\Xi_t(A_i\times B_i)=r_i ,\; \Xi_t(A_i\times Y)=l_i \;\forall i \mid \vecu\in U_2^{(\eps)} \right) \\  
- \sum_{l_1=r_1}^L \cdots \sum_{l_n=r_n}^L   \PP\left(\Theta(A_i)=l_i \;\forall i \right) \prod_{i=1}^n  \tbinom{l_i}{r_i} p_i^{r_i} (1-p_i)^{l_i-r_i} \bigg| \ll E^{(\eps)} ,
\end{multline}
where
\begin{equation}
E^{(\eps)}:=\limsup_{t\to\infty} \lambda(U_1^{(\eps)}).
\end{equation}

As observed earlier,
\begin{equation}
\sum_{\substack{\mathclap{l_1,\dots,l_n\ge 0}\\
\max_{i} l_i > L}} \PP\left(\Theta(A_i)=l_i \;\forall i \right) = O\left( (1+L)^{-d} \right),
\end{equation}
by Lemma \ref{lem:decay}. This yields
\begin{multline}\label{comb2}
\limsup_{t\to\infty}\bigg| \sum_{0\le l_1,\dots,l_n\le L}  \PP\left(\Xi_t(A_i\times B_i)=r_i ,\; \Xi_t(A_i\times Y)=l_i \;\forall i \mid \vecu\in U_2^{(\eps)} \right) \\  
- \sum_{l_1=r_1}^\infty \cdots \sum_{l_n=r_n}^\infty   \PP\left(\Theta(A_i)=l_i \;\forall i \right)\prod_{i=1}^n  \tbinom{l_i}{r_i} p_i^{r_i} (1-p_i)^{l_i-r_i} \bigg| \ll  (1+L)^{-d} + E^{(\eps)} .
\end{multline}
By the definition of $\Xi$,
\begin{equation}
\sum_{l_1=r_1}^\infty \cdots \sum_{l_n=r_n}^\infty   \PP\left(\Theta(A_i)=l_i \;\forall i \right) \prod_{i=1}^n  \tbinom{l_i}{r_i} p_i^{r_i} (1-p_i)^{l_i-r_i}
=\PP\left(\Xi(A_i\times B_i)=r_i \;\forall i \right)  .
\end{equation}
Combining the estimates \eqref{comb1} and \eqref{comb2} yields for $L\to\infty$,
\begin{equation}\label{comb4}
\limsup_{t\to\infty}
\big| \PP\left(\Xi_t(A_i\times B_i)=r_i \;\forall i \mid \vecu\in U_2^{(\eps)} \right) 
- \PP\left(\Xi(A_i\times B_i)=r_i \;\forall i \right) \big| \ll E^{(\eps)} .
\end{equation}
Here $\eps>0$ is arbitrary. In view of \eqref{comb0} and \eqref{comb4}, what remains to be shown is that $E^{(\eps)}\to 0$ as $\eps\to 0$.
To this end, notice that
\begin{equation}
\begin{split}
\lambda\big(U_1^{(\eps)}\big)  & \le   \sum_{k \in\ZZ} \lambda\big\{ \vecu \in U :\#\big( A
 \cap 
 \big( [k\eps,k\eps +2\eps] \times\R^{d-1}\big)
 \cap
 \Z^d (\1,\vecxi)M E(\vecu) \Phi^t\big) \ge 2 \big\} \\
 & = \sum_{k \in\ZZ} \PP\big( \Theta_t\big( A
 \cap 
 \big( [k\eps,k\eps +2\eps] \times\R^{d-1}\big)\big) \ge 2 \big)  .
 \end{split}
\end{equation}
Since $A$ is bounded, the number of non-zero terms in this sum is $O(1/\eps)$, where the implied constant depends only on $A$ (not on $t$). Taking the limit $t\to\infty$ yields (Corollary \ref{cor:ms_special})
\begin{equation}
E^{(\eps)} \le \sum_{k \in\ZZ} \PP\big( \Theta\big( A
 \cap 
 \big( [k\eps,k\eps +2\eps] \times\R^{d-1}\big)\big) \ge 2 \big)  .
\end{equation}
Because we have assumed that the closure of $A$ does not meet the hyperplane $\{x_1=0\}$, for each $k\ge 0$  the set $A
 \cap 
 \big( [k\eps,k\eps +2\eps] \times\R^{d-1}\big)$
is contained in the cylinder
\begin{equation}
\mathfrak Z(c_1,c_2,C) = \big\{(x_1,\ldots,x_d)\in\RR^d :  c_1 < x_1 < c_2, \|(x_2,\ldots,x_d)\|< C \big\} 
\end{equation}
form some $c_1>0$, $c_2>c_1+2\epsilon$, and $C$ sufficiently large in terms of $A$. (The case of negative $k$ is analogous.)
Therefore, when $d=2$ and $\vecxi\in\Q^2$, we have \cite[Lemma 7.12]{marklof_strombergsson_free_path_length_2010}
\begin{align}
\PP\big( \Theta\big( A
 \cap 
 \big( [k\eps,k\eps +2\eps] \times\R^{d-1}\big)\big) \ge 2 \big)   =O(  \eps^2 \log \eps)  , 
\end{align}
and so $E^{(\eps)} = O( \eps \log \eps)\to 0$ as $\eps\to 0$.
In all other cases we have (use \cite[Lemmas 7.12]{marklof_strombergsson_free_path_length_2010} for $\vecxi\in\QQ^d$, $d\ge 3$, and \cite[Lemmas 7.13]{marklof_strombergsson_free_path_length_2010} for $\vecxi\notin\QQ^d$, $d\ge 2$) 
\begin{align}
\PP\big( \Theta\big( A
 \cap 
 \big( [k\eps,k\eps +2\eps] \times\R^{d-1}\big)\big) \ge 2 \big)  =O(\eps^2)  ,  
\end{align}
that is $E^{(\eps)}  =O  (\eps)\to 0$ as $\eps\to 0$.
\end{proof}
 
 \begin{proof}[Proof of Proposition \ref{prop:ms_special22b}]
The proof is almost the same as that of Proposition \ref{prop:ms_special22}. We have that $\Xi_{0,t}$ is a random point process on $\R^d$ that is jointly measurable with a random variable on $Y$ whose  marginal is $\varphi_t$. We follow the steps of the previous proof  until \eqref{comb0}. 
For $r_i \in\N\cup\{0\}$, $i\ge 1$, we have
\begin{align}\label{eq:xi_integral}
\PP
\big(\varphi_t B_0 =1 &,  \Xi_{0,t}(A_i\times B_i)=r_i \;\forall i \big) 
 \\
&=\PP\left(\varphi_t B_0 = 1, \Xi_{0,t}(A_i\times B_i)=r_i \;\forall i \mid \vecu\in U_2^{(\eps)} \right)
+O\left(\lambda\big(U_1^{(\eps)}\big)\right) .
\end{align}
The next substantive modification is in the application of mixing of order $nL$ in \eqref{eq:mixing}, which becomes
\begin{align}\label{eq:mixing_0}
\sum_{0\le l_1,\dots,l_n\le L}   
& \PP\left(\varphi_t B_0 =1 ,  \Xi_{0,t}(A_i\times B_i)=r_i ,\; \Xi_{0,t}(A_i\times Y)=l_i \;\forall i \mid \vecu\in U_2^{(\eps)} \right) \\  
& =\sum_{l_1=r_1}^L \cdots \sum_{l_n=r_n}^L \PP(\varphi_t B_0 =1 ) \PP\left(\Xi_{0,t}(A_i\times Y)=l_i \;\forall i \mid \vecu\in U_2^{(\eps)} \right) \\
&\times \bigg(\prod_{i=1}^n \tbinom{l_i}{r_i} p_i^{r_i} (1-p_i)^{l_i-r_i}  +  O_L\big(\vartheta_{nL+1}\big(c_M \eps e^{(d-1)t} \big)\big)+O_L\big(\beta_{\vecxi}\big( c_{A,M} e^{(d-1)t} \big) \big)\bigg).
\end{align}
Note that here  $\PP(\varphi_t B_0 = 1) = \PP(\varphi B_0 = 1) = \rho_0(B_0)$. The remainder of the proof runs parallel to that of Proposition \ref{prop:ms_special22}. 
\end{proof}

\section{Spherical averages in the space of marked lattices: almost sure convergence}\label{sec:main}

Let us now turn to the main result of this paper. We say the random field $\eta$ is \emph{slog-mixing} ({\em slog} stands for {\em strongly super-logarithmic}), if 
for every $\delta>0$
\begin{align}\label{eq:mixing_rate0}
 \sum_{t=0}^\infty \alpha(e^{\delta t})<\infty .
\end{align}
This holds for instance when
\begin{equation}
\alpha(s) \le C \, (\log s)^{-1-\eps}
\end{equation}
for all $s\ge 2$, with positive constants $C$, $\eps$. 
 
\begin{maintheorem}\label{th:fixed_omega}
Fix $\vecxi\in\RR^d$ and $M\in G_0$. Assume the random field $\eta$ is  slog-mixing with asymptotic distribution $\rho$. Then there is a set $\Omega_0\subset\Omega$ with $\nu(\Omega_0)=1$, such that for every $\omega\in\Omega_0$ and every a.c.\ Borel probability measure $\lambda$ on $U$, 
\begin{equation}\label{eq:fixed_omega}
P_t^\omega \toweak 
\begin{cases}
\tmu_{\vecxi,\rho} & (\vecxi\notin\ZZ^d)\\
\tmu_{0,\rho_0,\rho} & (\vecxi\in\ZZ^d),
\end{cases}
\end{equation}
as $t\to\infty$, where $\rho_0=\delta_{\omega(-\vecxi)}$. 
\end{maintheorem}

Let $\zeta_t^\omega$ be the random element distributed according to $P_t^\omega$, and $\zeta$ according to $\tmu_{\vecxi,\rho}$. Theorem \ref{th:fixed_omega} $(\vecxi\notin\ZZ^d)$ says that $\zeta_t^\omega \todist \zeta$ for $\nu$-almost every $\omega$. Similarly for $\vecxi\in\ZZ^d$, let $\zeta_t^\omega$ be the random element distributed according to $P_t^\omega$, and $\zeta$ according to $\tmu_{0,\rho_0,\rho}$ where $\rho_0=\delta_{\omega(-\vecxi)}$. Theorem \ref{th:fixed_omega} $(\vecxi\in\ZZ^d)$ can then be expressed as well as $\zeta_t^\omega \todist \zeta^\omega$  for $\nu$-almost-every $\omega$. (The $\omega$-dependence of $\zeta^\omega$ is only through $\rho_0$.)
Again, in view of Proposition \ref{topprop1} and the continuous mapping theorem \cite[Theorem 4.27]{Kallenberg02}, this is equivalent to the following convergence in distribution for the random measures $\Xi_t^\omega=\kappa(\zeta_t^\omega)$ and $\Xi=\kappa(\zeta)$ ($\vecxi\notin\ZZ^d$), $(\varphi_t^\omega,\Xi_{0,t}^\omega)=\kappa_0(\zeta_t^\omega)$, $(\varphi^\omega,\Xi_0)=\kappa_0(\zeta^\omega)$ ($\vecxi\in\ZZ^d$). Note that here $\varphi_t^\omega=\varphi^\omega=\delta_{\omega(-\vecxi)}$. Thus, if $\omega$ is fixed, then $\varphi_t^\omega$ is deterministic and independent of $t$, and we may state the convergence solely for $ \Xi_{0,t}^\omega$ rather than the joint distribution $(\varphi_t^\omega, \Xi_{0,t}^\omega)$ used in the case of random $\omega$ (Proposition \ref{thm:ppoint22}).

\begin{theorem}\label{thm:ppoint22bb}
Under the assumptions of Theorem \ref{th:fixed_omega}, for every $\omega\in\Omega_0$ and every a.c.\ Borel probability measure $\lambda$ on $U$, 
\begin{equation}
\Xi_t^\omega \todist \Xi \quad (\vecxi\notin\ZZ^d), \qquad \Xi_{0,t}^\omega \todist \Xi_0 \quad (\vecxi\notin\ZZ^d).
\end{equation}
\end{theorem}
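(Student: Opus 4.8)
The plan is to follow the template of Section~\ref{sec:main0}: I would reduce, via Propositions~\ref{topprop1} and~\ref{topprop2}, the continuous mapping theorem, and \cite[Theorem~16.16]{Kallenberg02}, to the convergence of the finite-dimensional distributions of the marked point processes, and then prove that convergence for $\nu$-almost every $\omega$ by a second-moment argument over the random field. Concretely, I would fix a countable family of test sets $A\times B$, with $A$ a box with rational vertices and $B$ running through a countable family of $\rho$-continuity sets in $Y$, a countable family of sub-boxes $U'\subset U$ with rational vertices, and non-negative integer target counts $r_1,\dots,r_n$, and set
\begin{equation}
G_t^\omega=\int_{U'}\mathbbm{1}\big(\Xi_t^\omega(A_i\times B_i)=r_i\ \forall i\big)\,d\vecu ,
\end{equation}
which for fixed $\omega$ and $t$ is a genuine function of $\vecu$; in the case $\vecxi\in\ZZ^d$ I would multiply the integrand by the deterministic factor $\mathbbm{1}(\omega(-\vecxi)\in B_0)$ coming from the point mass $\varphi_t^\omega=\delta_{\omega(-\vecxi)}$, which depends neither on $\vecu$ nor on $t$. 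Since a slog-mixing field is mixing of all orders and has asymptotic distribution $\rho$, Proposition~\ref{prop:ms_special22} (resp.~\ref{prop:ms_special22b}), applied with $\lambda$ the normalised Lebesgue measure on $U'$, already gives $\EE_\nu[G_t^\omega]\to\leb(U')\,\PP\big(\Xi(A_i\times B_i)=r_i\ \forall i\big)$ as $t\to\infty$, after noting (Lemma~\ref{lem:fst}) that $\leb(\d A_i)=\rho(\d B_i)=0$ forces $\Xi\d(A_i\times B_i)=0$ almost surely.

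The heart of the matter is to show $G_t^\omega-\EE_\nu[G_t^\omega]\to0$ for $\nu$-a.e.\ $\omega$, for which I would establish a variance bound summable along $t\in\ZZ_{\ge0}$. Writing $J(\vecu)=\bigcup_i\{\vecm\in\ZZ^d:(\vecm+\vecxi)M E(\vecu)\Phi^t\in A_i\}$, the integrand of $G_t^\omega$ is $\curB_{J(\vecu)}$-measurable, so the definition of the strong-mixing coefficient yields $\Var_\nu(G_t^\omega)\le\int_{U'}\int_{U'}\min\big(1,\alpha(\sep(J(\vecu),J(\vecu')))\big)\,d\vecu\,d\vecu'$. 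I would split this integral at a threshold $s_t=e^{\delta t}$ with $0<\delta<d-1$: the pairs with $\sep(J(\vecu),J(\vecu'))\ge s_t$ contribute at most $\leb(U')^2\alpha(e^{\delta t})$, which is summable by the slog-mixing condition~\eqref{eq:mixing_rate0}. For the remaining pairs the plan is to exploit, on the one hand, that~\eqref{cAM} together with the boundedness of $A$ forces $\|\vecm+\vecxi\|\asymp e^{(d-1)t}$ for every $\vecm\in J(\vecu)$, so only $O(e^{d(d-1)t})$ labels $\vecm$ can ever occur; and, on the other, that $(\vecm+\vecxi)ME(\vecu)\Phi^t\in A$ requires $E(\vecu)$ to rotate $(\vecm+\vecxi)M/\|(\vecm+\vecxi)M\|$ to within $O(e^{-dt})$ of $\vece_1$, which by the uniform Lipschitz inverse of $\widetilde E$ confines $\vecu$ to a set of measure $O(e^{-d(d-1)t})$. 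Hence $\sup_\vecm\leb\{\vecu:(\vecm+\vecxi)ME(\vecu)\Phi^t\in A\}=O(e^{-d(d-1)t})$ and $\sum_\vecm\leb\{\vecu:(\vecm+\vecxi)ME(\vecu)\Phi^t\in A\}=O(1)$, and bounding the close pairs by $\sum_{\|\vecw\|<s_t}\sum_\vecm\leb\{\vecu:\cdots\}\,\leb\{\vecu':\cdots\}$ gives $O(s_t^{\,d}e^{-d(d-1)t})=O(e^{-d(d-1-\delta)t})$, again summable. (For $\vecxi\in\ZZ^d$ one first conditions on $\omega(-\vecxi)$; this costs only $O(\alpha(c\,e^{(d-1)t}))$ because $J(\vecu)$ always lies at distance $\gg e^{(d-1)t}$ from $-\vecxi$.) Chebyshev's inequality and Borel--Cantelli then give $G_t^\omega\to\leb(U')\,\PP(\Xi(A_i\times B_i)=r_i\ \forall i)$ for $\nu$-a.e.\ $\omega$ along $t\in\ZZ_{\ge0}$, and intersecting the countably many exceptional sets produces one set $\Omega_0$ with $\nu(\Omega_0)=1$.

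For $\omega\in\Omega_0$, three routine steps finish the argument. First, since $\leb(U')^{-1}\int_{U'}(\cdots)\,d\vecu$ converges for all rational sub-boxes $U'$ and the integrands are bounded by $1$, a weak-$*$ approximation argument in $L^\infty(U)=(L^1(U))^{*}$ upgrades this to $\PP_\lambda(\Xi_t^\omega(A_i\times B_i)=r_i\ \forall i)\to\PP(\Xi(A_i\times B_i)=r_i\ \forall i)$ along $t\in\ZZ_{\ge0}$ for \emph{every} a.c.\ probability measure $\lambda$ on $U$, with the same $\Omega_0$; by \cite[Theorem~16.16]{Kallenberg02} this gives $\Xi_t^\omega\todist\Xi$ (resp.\ $\Xi_{0,t}^\omega\todist\Xi_0$) along the integers, equivalently $P_t^\omega\toweak\tmu_{\vecxi,\rho}$ (resp.\ $\tmu_{0,\rho_0,\rho}$) along the integers. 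Second, I would pass to real times using the identity $P_t^\omega f=P_{\lfloor t\rfloor}^\omega(f\circ\Phi^{\,t-\lfloor t\rfloor})$, the $\Phi^s$-invariance of $\tmu_{\vecxi,\rho}$ and $\tmu_{0,\rho_0,\rho}$ (right-invariance of the relevant Haar measure; the field measure is unaffected by $\Phi^s$), the equicontinuity of $\{f\circ\Phi^s:s\in[0,1]\}$ on compacta, and the tightness of the weakly convergent sequence $(P_n^\omega)_n$: along any subsequence on which $t-\lfloor t\rfloor$ converges, $P_t^\omega f\to\tmu_{\vecxi,\rho}f$, whence $P_t^\omega\toweak\tmu_{\vecxi,\rho}$ (resp.\ $\tmu_{0,\rho_0,\rho}$) as $t\to\infty$. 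Third, translating back through $\kappa$ and $\kappa_0$ (continuous mapping theorem) yields $\Xi_t^\omega\todist\Xi$ and $\Xi_{0,t}^\omega\todist\Xi_0$ as $t\to\infty$, which is the assertion.

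The step I expect to be the main obstacle is the geometric estimate in the second paragraph: bounding the Lebesgue measure of the set of direction pairs $(\vecu,\vecu')$ whose label sets $J(\vecu),J(\vecu')$ fail to be $e^{\delta t}$-separated in $\ZZ^d$. This forces one to combine the transversality of the family $\{E(\vecu)\Phi^t\}_\vecu$ (so that a single label $\vecm\in J(\vecu)$ pins $\vecu$ down to a set of measure of order $e^{-d(d-1)t}$) with the annulus bound $\|\vecm+\vecxi\|\asymp e^{(d-1)t}$ (which keeps the number of relevant labels under control, uniformly in $t$), and to match the resulting exponential gain against the slog-mixing decay rate by the choice of $s_t$. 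Everything else is a transcription of the averaged argument of Section~\ref{sec:main0} together with standard measure-theoretic bookkeeping.
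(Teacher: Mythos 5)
Your proposal is correct, and its two core technical steps differ genuinely from the paper's proof (which deduces Theorem \ref{thm:ppoint22bb} from Propositions \ref{prop:fixed_omega} and \ref{prop:fixed_omega2}). The paper computes the $\nu$-variance of the full $\lambda$-average, writes it via two independent directions $\vecu_1,\vecu_2\sim\lambda$, and splits according to the angular separation of $\vece_1E(\vecu_1)^{-1}$ and $\pm\vece_1E(\vecu_2)^{-1}$: separation $\ge\theta_0$ forces all contributing labels for the two directions to be $\gg\theta_0e^{(d-1)t}$ apart, so $\alpha$ is applied to the two whole label sets, while close pairs have $\lambda\times\lambda$-measure $O(K^2\theta_0^{d-1}+K^{-1})$ via the Lipschitz inverse of $\widetilde E$ and a truncation of the density of $\lambda$; uniformity is then obtained by putting a supremum over a.c.\ $\lambda$ and over $D_i\in\curC_\zeta$ into the Borel--Cantelli step, and real times are recovered from $t\in\delta\NN$ by the time-perturbation Lemma \ref{lem:close_t}. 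You instead remove $\lambda$ from the probabilistic step (Lebesgue measure on rational sub-boxes), bound the covariance kernel by $\alpha(\sep(J(\vecu),J(\vecu')))$, and control the non-separated direction pairs by a transversality/lattice-point count; arbitrary a.c.\ $\lambda$ is recovered afterwards by $L^1$-density, and real $t$ by the flow identity together with the $\Phi^s$-invariance of the limit laws, tightness and equicontinuity. Your route buys a cleaner treatment of uniformity in $\lambda$ (the paper's supremum over an uncountable family inside Borel--Cantelli is delicate, since a bound on each variance is not a bound on the variance of the supremum) and dispenses with Lemma \ref{lem:close_t}; the paper's route needs no lattice-point counting at all and makes explicit, through $\curC_\zeta$, the hypothesis your count also uses, namely that the test boxes avoid a slab $|x_1|\le\zeta$ --- this is what gives \eqref{cAM}, the confinement $\|\vecm+\vecxi\|\asymp e^{(d-1)t}$, and the $O(e^{-dt})$ angular tolerance behind your bounds of $O(e^{-d(d-1)t})$ per label and $O(e^{d(d-1)t})$ labels. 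Since your countable family of rational boxes does not build this in, you should either restrict to boxes avoiding the hyperplane and add the standard thin-slab escape estimate (exactly what the paper does when it removes the $\curC_\zeta$ restriction at the end of its proof), or accept the weaker per-label bound $O(e^{-(d-1)t})$, which still sums once $\delta<(d-1)/d$ in $s_t=e^{\delta t}$ --- harmless, since \eqref{eq:mixing_rate0} holds for every $\delta>0$. With that minor adjustment, and the standard remark that finite-dimensional convergence along a countable dissecting family of product continuity sets suffices for the random-measure convergence in \cite[Theorem 16.16]{Kallenberg02}, your argument is complete.
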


Again by \cite[Theorem 16.16]{Kallenberg02}, Theorem \ref{thm:ppoint22bb} (and hence Theorem \ref{th:fixed_omega}) follows from the convergence of finite-dimensional distributions:

\begin{prop}\label{prop:fixed_omega}
Under the assumptions of Theorem \ref{th:fixed_omega} with $\vecxi\notin\ZZ^d$, for every $\omega\in\Omega_0$, every a.c.\ Borel probability measure $\lambda$ on $U$, every $n\in\NN$ and all $D_1,\ldots,D_n\in\curB(\RR^d\times Y)$ that are bounded with $\Xi\d D_i = 0$ almost surely for all $i$, 
\begin{equation}
(\Xi_t^\omega D_1,\ldots,\Xi_t^\omega D_n) \todist (\Xi D_1,\ldots,\Xi D_n) .
\end{equation}
\end{prop}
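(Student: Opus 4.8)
The plan is to upgrade the averaged result (Proposition \ref{prop:ms_special22}) to an almost-sure statement by a Borel--Cantelli argument along a discrete sequence of times, combined with a monotonicity-type interpolation argument to fill in the gaps. First I would reduce, exactly as in the proof of Proposition \ref{prop:ms_special22}, to test sets of the form $D_i=A_i\times B_i$ with the $A_i$ pairwise disjoint convex sets whose closures avoid the hyperplane $\{x_1=0\}$ and with $(\leb\times\rho)\partial D_i=0$; this is justified by Lemma \ref{lem:fst}, Lemma \ref{lem:onepoint}, Corollary \ref{cor:ms_special} and the Chebyshev inequality just as before. The key new quantity is the conditional measure: for the random field $\eta$ fixed to a realization $\omega$, the ``probability'' $\PP(\Xi_t^\omega D_i = r_i\ \forall i)$ is now replaced by $\lambda$-measure of the set of directions $\vecu\in U$ for which $\#\{\vecm\in\scrL_{t,\vecu}\cap A_i : \omega(\vecm)\in B_i\}=r_i$. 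I would split $U=U_1^{(\eps)}\cup U_2^{(\eps)}$ exactly as in Proposition \ref{prop:ms_special22}, so that on $U_2^{(\eps)}$ all relevant lattice points $\vecm$ are $c_M\eps e^{(d-1)t}$-separated and at distance $\gg e^{(d-1)t}$ from $\vecxi$.

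The heart of the argument is a second-moment (variance) estimate. Writing $X_t(\omega) = \lambda\{\vecu\in U_2^{(\eps)} : \#\{\vecm\in\scrL_{t,\vecu}\cap A_i : \omega(\vecm)\in B_i\}=r_i\ \forall i\}$, I would compute $\int_\Omega X_t\,d\nu$ (this is essentially what Proposition \ref{prop:ms_special22} gives, up to the error terms) and $\int_\Omega X_t^2\,d\nu$. The latter is a double integral over pairs $(\vecu,\vecu')\in (U_2^{(\eps)})^2$; for each fixed pair the integrand is $\nu$ of an event depending on the values of $\omega$ on the finite set $J(\vecu)\cup J(\vecu')$ where $J(\vecu)=\bigcup_i(\scrL_{t,\vecu}\cap A_i)$. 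When the two ``constellations'' $J(\vecu)$ and $J(\vecu')$ are far apart (separation $\ge s$), strong mixing bounds the covariance by $\alpha(s)$ times the number of subset choices, which is $\ll_L \alpha(s)$; when they are close, I bound crudely by the probability that one constellation has $\ge L$ points, controlled by Lemma \ref{lem:decay} and Corollary \ref{cor:ms_special}. Combining, I expect $\Var_\nu(X_t) \ll_{L,n,A} \alpha(c\eps e^{(d-1)t}) + (\text{small-}\eps\text{ and large-}L\text{ error terms that are uniform in }t)$. Evaluating this at the discrete times $t=t_j := j\delta'$ for a small fixed $\delta'>0$, the slog-mixing hypothesis \eqref{eq:mixing_rate0} gives $\sum_j \alpha(c\eps e^{(d-1)\delta' j})<\infty$, so by Chebyshev and Borel--Cantelli, for $\nu$-a.e.\ $\omega$ we have $X_{t_j}(\omega)\to \EE_\nu X_{t_j}$ along this sequence, and the latter converges to $\PP(\Xi D_i = r_i\ \forall i)$ by Proposition \ref{prop:ms_special22} after sending $L\to\infty$ and $\eps\to0$ (in a suitable order, e.g.\ $\eps=\eps_j\to0$ slowly).

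To pass from the sequence $t_j$ to all $t\to\infty$, I would use a sandwiching argument: the finitely many test sets $A_i$ can be slightly enlarged/shrunk to $A_i^\pm$ so that for $t\in[t_j,t_{j+1}]$ the count $\Xi_t^\omega(A_i\times B_i)$ lies between counts associated to $A_i^-$ at time $t_j$ and $A_i^+$ at time $t_j$ up to a $\Phi^{t-t_j}$-distortion; choosing $\delta'$ small makes this distortion as small as we like, and the limit measure $\Xi$ has no atoms on the relevant boundaries (again Lemma \ref{lem:fst}), so the upper and lower bounds coincide in the limit. Finally, a diagonal/countable-intersection argument over a countable dense family of test data $(n, A_i, B_i, r_i)$ produces the single full-measure set $\Omega_0$; since convergence of all finite-dimensional distributions of the point processes characterizes convergence in distribution (\cite[Theorem 16.16]{Kallenberg02}), and this in turn is independent of $\lambda$ once the $\lambda$-integrals are understood via the separation structure, $\Omega_0$ works for every a.c.\ $\lambda$ simultaneously.

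\textbf{Main obstacle.} The delicate point is the variance estimate: one must show the off-diagonal contribution genuinely decouples at the mixing scale, which requires that for generic pairs $\vecu,\vecu'$ the two constellations $\scrL_{t,\vecu}\cap A$ and $\scrL_{t,\vecu'}\cap A$ are far apart in $\ZZ^d$ — and simultaneously control the measure of the ``bad'' pairs where they are not, uniformly in $t$. This is an equidistribution-of-pairs statement that does not follow formally from Corollary \ref{cor:ms_special} (a single-process statement); I expect it requires either an explicit geometric argument bounding $\lambda\otimes\lambda$ of near-coincident direction pairs (in the spirit of the $U_1^{(\eps)}$ estimate, but now for two independent directions), or an input on joint equidistribution of two spherical averages, which may need to be established separately. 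Getting the error terms to be summable over $j$ while keeping $\eps_j\to0$ is the crux of making Borel--Cantelli applicable.
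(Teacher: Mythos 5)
Your overall strategy is the same as the paper's: a second-moment estimate in $\omega$ (equivalently, two independent copies of the direction $\vecu$), strong mixing to decouple distant constellations, Borel--Cantelli along an arithmetic progression of times using slog-mixing, an interpolation in $t$ (the paper's Lemma \ref{lem:close_t} plays the role of your sandwiching), and a countable intersection over classes of test sets to produce one $\Omega_0$. The genuine gap is precisely the step you flag as the ``main obstacle,'' and it is the decisive step: you must control, uniformly in $t$, the pairs $(\vecu_1,\vecu_2)$ for which the two constellations are not separated at a scale where $\alpha$ applies. Your proposed crude bound in the close regime (the probability that a constellation has $\ge L$ points) is not the relevant quantity --- proximity of the two constellations has nothing to do with their cardinality --- and your suggestion that one may need a joint equidistribution theorem for two spherical averages overshoots what is required. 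The paper closes the step by an elementary geometric argument: since each $A_i\in\curC_\zeta$ avoids the slab $[-\zeta,\zeta]\times\RR^{d-1}$ and lies in a ball of radius $1/\zeta$, every point of $A_i\Phi^{-t}E(\vecu)^{-1}$ lies within $O_\zeta(e^{-t})$ of the line $\RR\,\vece_1E(\vecu)^{-1}$ at distance $\gg_\zeta e^{(d-1)t}$ from the origin; hence if $\min_\pm\|\vece_1E(\vecu_1)^{-1}\pm\vece_1E(\vecu_2)^{-1}\|>\theta_0$ the corresponding lattice points satisfy $\|\vecm_1-\vecm_2\|\ge c_M C_\zeta\theta_0 e^{(d-1)t}$, so strong mixing contributes $O\big(\alpha(c_M C_\zeta\theta_0 e^{(d-1)t})\big)$, uniformly over $D_1,\ldots,D_n\in\curC_\zeta$. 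For the complementary (near-coincident) pairs one bounds the integrand trivially by $1$ and estimates $(\lambda\times\lambda)$ of that set, using that $\widetilde E^{-1}$ is Lipschitz and truncating the density $\lambda'$ at a level $K$, which gives $K^2\overline M^{\,d-1}\theta_0^{d-1}+\lambda\{\lambda'>K\}$ and, with $K=\theta_0^{-(d-1)/3}$, an error $O(\theta_0^{(d-1)/3})$.

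The quantitative point that resolves your summability worry is that the angular threshold is taken $t$-dependent: $\theta_0=e^{-\gamma t}$ with $0<\gamma<d-1$. The mixing scale $\theta_0e^{(d-1)t}=e^{(d-1-\gamma)t}$ still grows exponentially, so $\sum_{t\in\delta\NN}\alpha\big(ce^{(d-1-\gamma)t}\big)<\infty$ by \eqref{eq:mixing_rate0}, while the close-pair term $e^{-\gamma(d-1)t/3}$ is trivially summable. In particular, there is no need to let your intra-constellation parameter $\eps$ shrink along the sequence: in the paper the variance is computed for the unconditioned probabilities $\PP(\Xi_t^\omega D_i=r_i\ \forall i)$, and the $U_1^{(\eps)}/U_2^{(\eps)}$ decomposition enters only inside Proposition \ref{prop:ms_special22}, which is invoked after Borel--Cantelli merely to identify the limit of the averaged quantity along $t\in\delta\NN$. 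Finally, your closing claim that the conclusion ``is independent of $\lambda$'' is too quick: a single $\Omega_0$ serving every a.c.\ $\lambda$ is obtained only because the variance bound above is uniform in $\lambda$ and in $D_i\in\curC_\zeta$, so that the Borel--Cantelli event can be taken as a supremum over $\lambda$ and over these test sets; this is also why the restriction to the classes $\curC_\zeta$ (later removed by the small-measure and approximation arguments) is built into the almost-sure step rather than added at the end.
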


\begin{prop}\label{prop:fixed_omega2}
Under the assumptions of Theorem \ref{th:fixed_omega} with $\vecxi\in\ZZ^d$, for every $\omega\in\Omega_0$, every a.c.\ Borel probability measure $\lambda$ on $U$, every $n\in\NN$ and all $D_1,\ldots,D_n\in\curB(\RR^d\times Y)$ that are bounded with $\Xi_0\d D_i = 0$ almost surely for all $i$,
\begin{equation}
(\Xi_{0,t}^\omega D_1,\ldots,\Xi_{0,t}^\omega D_n) \todist (\Xi_0 D_1,\ldots,\Xi_0 D_n) .
\end{equation}
\end{prop}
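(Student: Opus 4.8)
The plan is to run the same finite-dimensional reduction as in the proof of Proposition~\ref{prop:ms_special22}, but to replace the averaged conclusion by an almost sure one: combine the averaged statement (Proposition~\ref{prop:ms_special22b}) with a second-moment estimate and a Borel--Cantelli argument along a fast subsequence, then interpolate to continuous $t$. First I would reduce to product test sets $D_i=A_i\times B_i$ with $A_i\in\curB(\RR^d)$ bounded and convex, pairwise disjoint, with $\leb(\partial A_i)=\rho(\partial B_i)=0$, and with the closure of $A:=\bigcup_i A_i$ disjoint from the hyperplane $\{x_1=0\}$; this is exactly the reduction carried out in Proposition~\ref{prop:ms_special22}, using Lemma~\ref{lem:onepoint}, Lemma~\ref{lem:decay} and Corollary~\ref{cor:ms_special} to discard test sets whose $A$-closure has small Lebesgue measure. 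Since $\vecnull\in\{x_1=0\}$, for $t$ large no $A_i$ meets the origin of the lattice $\ZZ^dME(\vecu)\Phi^t$, so on these $D_i$ the processes $\Xi_{0,t}^\omega$ and $\Xi_0$ may be treated as ordinary (not origin-removed) marked point processes; in particular $\rho_0=\delta_{\omega(-\vecxi)}$ and the deterministic point mass $\varphi_t^\omega$ play no role, consistent with the remark preceding Theorem~\ref{thm:ppoint22bb}. By \cite[Theorem~16.16]{Kallenberg02} and Lemma~\ref{lem:scd} (which gives $\EE\,\Xi_0(A_i\times B_i)=\leb(A_i)\rho(B_i)$, so $\Xi_0\partial D_i=0$ a.s.\ is automatic under our hypotheses) it suffices to prove that, for each fixed $(r_1,\dots,r_n)\in\ZZ_{\ge0}^n$,
\[
p_t(\omega):=\PP_\lambda\big(\Xi_{0,t}^\omega(A_i\times B_i)=r_i\ \forall i\big)\longrightarrow p_\infty:=\PP\big(\Xi_0(A_i\times B_i)=r_i\ \forall i\big)\qquad\text{for $\nu$-a.e.\ }\omega,
\]
noting that $\int_\Omega p_t\,d\nu\to p_\infty$ by Proposition~\ref{prop:ms_special22b} (take $B_0=Y$).

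The core of the argument is a variance estimate for $p_t$ over $\omega$. For fixed $\vecu\in U$ and $t$, the event $\{\Xi_{0,t}^\omega(A_i\times B_i)=r_i\ \forall i\}$ depends on $\omega$ only through its restriction to the finite set $J(\vecu,t)=\{\vecm\in\ZZ^d:(\vecm+\vecxi)ME(\vecu)\Phi^t\in\overline A\}$, so its indicator $\1_{\vecu,t}$ is $\curB_{J(\vecu,t)}$-measurable and bounded by $1$. By the computation behind \eqref{cAM}, the points of $J(\vecu,t)$ have norm $\asymp_{A,M}e^{(d-1)t}$ and lie within angle $O(e^{-dt})$ of the direction $\widetilde E(\vecu)M^{-1}$; hence, using the uniform Lipschitz inverse of $\widetilde E$ and the invertibility of $M$, for $\vecu\neq\vecu'$ one has $\sep\big(J(\vecu,t),J(\vecu',t)\big)\gg_{A,M,\widetilde E}e^{(d-1)t}\|\vecu-\vecu'\|$ once $t\gg\log(1/\|\vecu-\vecu'\|)$. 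The standard covariance inequality for strongly mixing fields then gives $\big|\Cov_\nu(\1_{\vecu,t},\1_{\vecu',t})\big|\le 4\,\alpha\big(\sep(J(\vecu,t),J(\vecu',t))\big)$. Since $\lambda\times\lambda$ assigns zero mass to the diagonal, dominated convergence already yields $\Var_\nu(p_t)=\int_U\!\int_U\Cov_\nu(\1_{\vecu,t},\1_{\vecu',t})\,d\lambda(\vecu)\,d\lambda(\vecu')\to0$; splitting the integral into dyadic shells $2^{-j}\le\|\vecu-\vecu'\|<2^{-j+1}$ produces an explicit rate of the shape $\Var_\nu(p_t)\ll\psi(e^{-ct})+\sum_{1\le j\ll t}\psi(2^{-j+1})\,\alpha\big(c\,2^{-j}e^{(d-1)t}\big)$, where $\psi(\delta):=(\lambda\times\lambda)\{\|\vecu-\vecu'\|<\delta\}\to0$ as $\delta\to0$.

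Finally, fix $\eps>0$ and a small $\Delta>0$. By Chebyshev, $\nu\big(|p_t-\int p\,d\nu|>\eps\big)\le\eps^{-2}\Var_\nu(p_t)$. The slog-mixing hypothesis $\sum_t\alpha(e^{\delta t})<\infty$, valid for every $\delta>0$, makes the $\alpha$-series in the variance bound summable along any geometric sampling schedule, and one chooses that schedule fast enough that $\sum_k\Var_\nu(p_{k\Delta})<\infty$; Borel--Cantelli together with $\int p_{k\Delta}\,d\nu\to p_\infty$ then gives $p_{k\Delta}(\omega)\to p_\infty$ for $\nu$-a.e.\ $\omega$ (intersecting the resulting full-measure sets over $\eps\in\QQ_{>0}$, over the countably many vectors $(r_i)$, and over a countable family of $(A_i,B_i)$). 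To pass to continuous $t$: marks are static, so for $t\in[k\Delta,(k+1)\Delta]$ the indicator $\1_{\vecu,t}$ can differ from $\1_{\vecu,k\Delta}$ only if some point of $\ZZ^dME(\vecu)\Phi^s$ crosses $\partial A_i$ for some $s$ in that interval, whence $|p_t(\omega)-p_{k\Delta}(\omega)|\le\lambda\big\{\vecu:\ZZ^dME(\vecu)\Phi^{k\Delta}\cap W_\Delta\neq\emptyset\big\}$ with $W_\Delta:=\bigcup_{0\le r\le\Delta}(\partial A)\Phi^{-r}$; by Corollary~\ref{cor:ms_special} and Lemma~\ref{lem:onepoint}, $\limsup_k$ of the right-hand side is $\le\leb(W_\Delta)$, which is independent of $\omega$ (the base dynamics on $X_\vecxi$ carries no $\omega$-dependence) and tends to $0$ with $\Delta$ because $\leb(\partial A)=0$. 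Letting $\Delta$ run through a sequence and adjoining the corresponding null sets, one gets $p_t(\omega)\to p_\infty$ for $\nu$-a.e.\ $\omega$, which is Proposition~\ref{prop:fixed_omega2}.

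The step I expect to be the main obstacle is reconciling the rate of decay of $\Var_\nu(p_t)$ with the choice of sampling schedule and the length of the interpolation gaps: the variance bound degrades with the modulus of continuity of $\psi$, which for a merely absolutely continuous $\lambda$ can be slow, so one must pick a geometrically spaced subsequence fast enough to beat $\psi$ while still keeping the $\alpha$-tail summable and the interpolation error $\leb(W_\Delta)$ small — this is precisely where the strength of the slog-mixing hypothesis is consumed, and getting the bookkeeping to close is the delicate part. A minor additional point is that in the dyadic splitting of the variance, and in any cut-off at $L$ lattice points, only the weaker tail bound $\PP(\Theta_0A>L)\ll(1+L)^{-d}$ of Lemma~\ref{lem:decay} is available in the $\vecxi\in\ZZ^d$ setting; since $d\ge2$ this remains summable in $L$ (as already used in \eqref{eq:large_L}) and causes no essential difficulty.
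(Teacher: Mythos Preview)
Your strategy is the paper's strategy: the paper's proof of Proposition~\ref{prop:fixed_omega2} simply says ``identical to the proof of Proposition~\ref{prop:fixed_omega}, with Proposition~\ref{prop:ms_special22b} in place of Proposition~\ref{prop:ms_special22}'', and Proposition~\ref{prop:fixed_omega} is exactly a second-moment bound on $p_t(\omega)$, summable Borel--Cantelli along an arithmetic grid $t\in\delta\NN$, and interpolation via Lemma~\ref{lem:close_t}. Your variance computation, separation estimate, and interpolation mechanism are all essentially the same in content, only organized via dyadic shells rather than a single threshold.

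There is, however, a real gap where you yourself flag the ``main obstacle''. The statement requires a \emph{single} full-measure set $\Omega_0$ valid for \emph{every} absolutely continuous $\lambda$; your variance bound is expressed through $\psi(\delta)=(\lambda\times\lambda)\{\|\vecu-\vecu'\|<\delta\}$, and you propose to ``choose the schedule fast enough to beat $\psi$''. But $\psi$ depends on $\lambda$, and for a merely a.c.\ $\lambda$ it can go to zero arbitrarily slowly, so any schedule chosen this way produces an $\Omega_0=\Omega_0(\lambda)$. There are uncountably many a.c.\ $\lambda$, so you cannot intersect afterwards. This is not bookkeeping; as written the argument does not close.

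The paper fixes precisely this point by bounding $\psi$ \emph{uniformly} in $\lambda$: with $\lambda'$ the density, split the set $\{\|\vecu_1-\vecu_2\|\le\theta_0\}$ according to $\max(\lambda'(\vecu_1),\lambda'(\vecu_2))\le K$ or $>K$; the first part is $\le K^2\,\overline M^{d-1}\theta_0^{d-1}\leb(U)$ and the second is $\le\lambda\{\lambda'>K\}\le K^{-1}$ by Chebyshev (since $\|\lambda'\|_{L^1}=1$). Optimizing $K=\theta_0^{-(d-1)/3}$ gives $\psi(\theta_0)\ll_U\theta_0^{(d-1)/3}$ with a constant depending only on $U$ and the Lipschitz constant of $\widetilde E^{-1}$, \emph{not} on $\lambda$. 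With this in hand one takes a single cut $\theta_0=e^{-\gamma t}$, $0<\gamma<d-1$, obtains $V^2(t)\ll\alpha(c\,e^{(d-1-\gamma)t})+e^{-\gamma(d-1)t/3}$ uniformly in $\lambda$ and in $D_1,\dots,D_n\in\curC_\zeta$, and sums over $t\in\delta\NN$ using slog-mixing. The Borel--Cantelli set is then independent of $\lambda$; the remaining countable intersections (over $r_i$, over a countable family $\curC_{1/k}$, over $n$) are as you describe. If you insert this uniform bound on $\psi$ into your dyadic decomposition, your argument and the paper's coincide.
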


The proof of these two propositions will require the following lemma. For each $\zeta>0$, define $\curC_\zeta\subset\curB(\RR^d\times Y)$ as the collection of sets $D=A\times B$ with the following properties:
\begin{enumerate}[(i)]
 \item $A\in\curB(\RR^d)$ is convex and contained in the ball of radius $1/\zeta$ around the origin,
 \item $B\in\curB(Y)$ such that $(\leb\times\rho)\d (A\times B)=0$,
 \item $([-\zeta, \zeta]\times \R^{d-1}) \cap A = \emptyset$.  
\end{enumerate}

\begin{lemma}\label{lem:close_t}
Given $\epsilon>0$, $\zeta<\infty$, there are constants $s_0$, $t_0$ such that for all $t\ge t_0$, $|s|\le s_0$, $\omega\in\Omega$, and every $D\in\curC_\zeta$ (in fact we only require property (i) in the definition of $\curC_\zeta$),
\begin{equation}
\PP\big( \Xi_t^\omega D \neq  \Xi_{t+s}^\omega D \big) <\epsilon , \qquad \PP\big( \Xi_{0,t}^\omega D \neq  \Xi_{0,t+s}^\omega D \big) < \epsilon.
\end{equation}
\end{lemma}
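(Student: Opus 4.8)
The plan is to reduce the assertion to a statement about the underlying \emph{unmarked} point processes $\Theta_t$ (resp.\ $\Theta_{0,t}$) of Section~\ref{sec:lattice}, and then to obtain the required uniformity over $\curC_\zeta$ by replacing the boundary collar of $A$ by a fixed finite family of grid cubes. Fix $D=A\times B\in\curC_\zeta$; the lattice underlying $\bP_t^\omega$ is $\scrL_{t,\vecu}=\Z^d(\1,\vecxi)ME(\vecu)\Phi^t$ as in \eqref{scrL}, and the point $\vecq=\vecm(\1,\vecxi)ME(\vecu)\Phi^t$ carries the mark $\omega(\vecm)$. The conceptually key observation is that this mark depends only on the index $\vecm$, not on $t$, so $\Xi_t^\omega D$ and $\Xi_{t+s}^\omega D$ can differ only when the set $\{\vecm\in\Z^d:\vecm(\1,\vecxi)ME(\vecu)\Phi^t\in A\}$ changes as $t$ is replaced by $t+s$. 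In particular the discrepancy event is independent of $\omega$, which is where the uniformity in $\omega$ comes from for free. (When $\vecxi\in\Z^d$ the same applies to $\Xi_{0,t}^\omega,\Xi_{0,t+s}^\omega$, since the deleted point $\vecnull\in\scrL_{t,\vecu}$ is fixed by $\Phi^s$ and contributes identically at both times; only property~(i) of $\curC_\zeta$ is used below.)

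Next I would quantify how much the lattice moves: right multiplication by $\Phi^s$ displaces a point $\vecq$ by $\|\vecq(\Phi^s-\1)\|\le c_d|s|\,\|\vecq\|$ for $|s|\le1$, with $c_d$ an absolute constant. Every point of $\scrL_{t,\vecu}$ or $\scrL_{t+s,\vecu}$ lying in $A$ has norm at most $1/\zeta$ by property~(i); hence a point that leaves or enters $A$ between times $t$ and $t+s$ must lie, at one of the two times, within $\delta:=c_d|s|/\zeta$ of $\partial A$. Writing $N_\delta$ for the closed $\delta$-neighbourhood of $\partial A$ (contained in the ball of radius $2/\zeta$ once $\delta\le1/\zeta$), this gives
\begin{equation}
\big\{\Xi_t^\omega D\ne\Xi_{t+s}^\omega D\big\}\subseteq\big\{\Theta_tN_\delta\ge1\big\}\cup\big\{\Theta_{t+s}N_\delta\ge1\big\},
\end{equation}
and the analogous inclusion with $\Theta_{0,t},\Theta_{0,t+s}$ when $\vecxi\in\Z^d$, for which $\Theta_{0,t}$ is exactly the process appearing in the second case of Corollary~\ref{cor:ms_special} (there $(\1,\vecxi)\in\Gamma$, so the underlying lattice is the linear lattice $\Z^d ME(\vecu)\Phi^t$). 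It therefore remains to bound $\PP(\Theta_tN_\delta\ge1)$ uniformly over $D\in\curC_\zeta$ and all large $t$.

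To this end I would fix a mesh $h\in(0,1]$ and let $\scrQ$ be the finite, $D$-independent family of cubes of the grid of side $h$ that meet the ball of radius $2/\zeta$. Since $A$ is convex and contained in the ball of radius $1/\zeta$, the $(d-1)$-measure of $\partial A$ is bounded in terms of $d,\zeta$ alone (monotonicity of surface area under inclusion for convex sets), so Steiner's formula gives $\leb(N_r)\le C_{d,\zeta}\,r$ for $0<r\le1$; consequently, once $\delta\le h$, the cubes of $\scrQ$ meeting $N_\delta$ have union inside $N_{(1+\sqrt d)h}$, so there are at most $C'_{d,\zeta}\,h^{1-d}$ of them. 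Applying Corollary~\ref{cor:ms_special} to the finitely many cubes $Q\in\scrQ$ (each with null boundary) together with the bound $\PP(\Theta Q\ge1)\le\leb Q$ of Lemma~\ref{lem:onepoint} produces a threshold $t_0$ — depending on $\zeta,h$ and on the fixed data $\vecxi,M,\lambda,d$, but not on $\omega$ or $D$ — with $\PP(\Theta_tQ\ge1)\le2\leb Q=2h^d$ for all $Q\in\scrQ$ and $t\ge t_0$; the case $\vecxi\in\Z^d$ is identical with $\Theta_{0,t},\Theta_0$ in place of $\Theta_t,\Theta$. Summing over the relevant cubes yields $\PP(\Theta_tN_\delta\ge1)\le2C'_{d,\zeta}\,h$ for all $t\ge t_0$, uniformly in $D$. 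Finally I would choose $h$ so small that $4C'_{d,\zeta}h<\epsilon$, then $s_0>0$ small enough that $|s|\le s_0$ forces $\delta\le\min\{h,1/\zeta\}$, and lastly replace $t_0$ by $t_0+s_0$ so that $t\ge t_0$ makes both $t$ and $t+s$ exceed the original threshold; combined with the inclusion above this gives $\PP(\Xi_t^\omega D\ne\Xi_{t+s}^\omega D)\le 4C'_{d,\zeta}h<\epsilon$, and likewise for $\Xi_{0,t}^\omega$.

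The step I expect to be the main obstacle is exactly this uniformity over $\curC_\zeta$: Corollary~\ref{cor:ms_special} is purely qualitative, with no rate, so it cannot be applied directly to the $\delta$-collar of each individual $A$. Passing to a fixed finite list of grid cubes, and using the convexity of $A$ (via Steiner's formula) to control how many of those cubes the collar can meet, is what converts the pointwise convergence into the required uniform bound. Everything else — the geometry of $\Phi^s$ and the reduction to the unmarked process — is routine.
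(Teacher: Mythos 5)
Your proposal is correct, and its core reduction is the same as the paper's: the marks $\omega(\vecm)$ attached to the indices $\vecm$ do not change when $t$ is replaced by $t+s$, so the discrepancy event is controlled by the unmarked process $\Theta_t$ (resp.\ $\Theta_{0,t}$) charging a small region near $\partial A$, after which Corollary \ref{cor:ms_special} and Lemma \ref{lem:onepoint} finish the job. Where you diverge is in how that small region is handled: the paper writes $\Xi_{t+s}^\omega(A\times B)=\Xi_t^\omega(A\Phi^{-s}\times B)$, bounds the discrepancy probability by $\PP\big(\Theta_t(A\triangle A\Phi^{-s})\ge 1\big)$, applies Corollary \ref{cor:ms_special} and Lemma \ref{lem:onepoint} directly to the set $A\triangle A\Phi^{-s}$, and concludes from the fact that $\leb(A\triangle A\Phi^{-s})\to 0$ as $s\to 0$ uniformly over convex $A$ in a fixed ball; you instead enclose the discrepancy in a $\delta$-collar of $\partial A$ and cover that collar by a fixed, $D$-independent finite family of grid cubes, applying Corollary \ref{cor:ms_special} only to those cubes and counting them via a Steiner-type bound. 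Your detour buys an explicitly $D$-uniform (and $\omega$-uniform) threshold $t_0$, since the qualitative convergence is only invoked for finitely many fixed sets, whereas the paper's shorter argument applies the corollary to the $D$- and $s$-dependent set $A\triangle A\Phi^{-s}$ and leaves the uniformity of the $t\to\infty$ step over $D\in\curC_\zeta$ implicit (only the Lebesgue-measure estimate is stated uniformly); so your version is a slightly more pedantic but self-contained rendering of the same idea, at the cost of the convexity/Steiner bookkeeping.
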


\begin{proof}
We have
\begin{equation}
\begin{split}
\Xi_{t+s}^\omega (A\times B) & = \# \{  (\vecm (\1,\vecxi)M E(\vecu)\Phi^{t+s} , \omega(\vecm)) \in A\times B : \vecm\in\ZZ^d \} \\
& = \# \{  (\vecm (\1,\vecxi)M E(\vecu) \Phi^t , \omega(\vecm)) \in A\Phi^{-s} \times B : \vecm\in\ZZ^d \} \\
& = \Xi_{t}^\omega (A \Phi^{-s} \times B),
\end{split}
\end{equation}
and therefore
\begin{equation}
\PP\big( \Xi_t^\omega D \neq  \Xi_{t+s}^\omega D \big) \le \PP\big( \Theta_t(A \triangle  A \Phi^{-s}) \ge 1 \big) .
\end{equation}
For the latter we have
\begin{equation}
\lim_{t\to\infty}  \PP\big( \Theta_t(A \triangle  A \Phi^{-s}) \ge 1 \big) = \PP\big( \Theta(A \triangle  A \Phi^{-s}) \ge 1 \big) \le \leb (A \triangle  A \Phi^{-s}),
\end{equation}
by Corollary \ref{cor:ms_special} and Lemma \ref{lem:onepoint}. 
The claim follows from the fact that \begin{equation}\lim_{s\to 0}\leb (A \triangle  A \Phi^{-s})=0
\end{equation}
uniformly for any convex $A$ contained in a fixed ball. The proof for $\Xi_{0,t}^\omega$ is identical.
\end{proof}

\begin{proof}[Proof of Proposition \ref{prop:fixed_omega}]

We have
\begin{equation}
\int_\Omega \PP\big( \Xi_t^\omega D_i=r_i \;\forall i \big) d\nu(\omega) 
= \PP\big( \Xi_t D_i=r_i \;\forall i \big) ,
\end{equation}
where $\Xi_t$ is the process considered in Proposition \ref{prop:ms_special22}.
Our first task is to show that
\begin{align}
V^2(t) & := \int_\Omega \big[ \PP\big( \Xi_t^\omega D_i=r_i \;\forall i \big) - \PP\big( \Xi_t D_i=r_i \;\forall i \big) \big]^2 d\nu(\omega)\\
&= \int_\Omega \big[ \PP\big( \Xi_t^\omega D_i=r_i \;\forall i \big) \big]^2 d\nu(\omega)- \PP\big( \Xi_t D_i=r_i \;\forall i \big)^2
\end{align}
decays sufficiently fast for large $t$, uniformly for all a.c.\ Borel probability measures $\lambda$ on $U$ and all $D_1,\ldots,D_n\in\curC_\zeta$ (with $n$ arbitrary but fixed), thus allowing an application of the Borel-Cantelli lemma to establish almost sure convergence.

Let $\Xi_{t,1}^\omega$, $\Xi_{t,2}^\omega$ be two independent copies of $\Xi_t^\omega$. The corresponding rotation parameter $\vecu$ is denoted by $\vecu_1$, $\vecu_2$, respectively, which are independent and distributed according to $\lambda$. Then
\begin{equation}
\int_\Omega \big[ \PP\big( \Xi_t^\omega D_i=r_i \;\forall i \big) \big]^2 d\nu(\omega) = \int_\Omega\PP\big( \Xi_{t,1}^\omega D_i=r_i,\; \Xi_{t,2}^\omega D_i=r_i \;\forall i \big) d\nu(\omega).
\end{equation}
We condition on $\vece_1 E(\vecu_1)^{-1}$, $\pm\vece_1 E(\vecu_2)^{-1}$ being close or not. Let $\theta_0 \le 1$ be small  to be chosen later depending on $t$. 
If $\min_\pm \|\vece_1 E(\vecu_1)^{-1} \pm \vece_1 E(\vecu_2)^{-1}\|\le \theta_0$, then we estimate trivially to get 
\begin{align}
\int_\Omega \PP\big( & \Xi_{t,1}^\omega D_i=r_i,\; \Xi_{t,2}^\omega D_i=r_i \;\forall i \;\big|\; \min_\pm\|\vece_1 E(\vecu_1)^{-1} \pm \vece_1 E(\vecu_2)^{-1}\|\le \theta_0 \big) d\nu(\omega)
\\
&\le  (\lambda\times \lambda ) \big\{ (\vecu_1,\vecu_2) \in U^2 : \min_\pm \|\vece_1 E(\vecu_1)^{-1} \pm \vece_1 E(\vecu_2)^{-1}\| \le \theta_0 \big\} \\
&=  (\lambda\times \lambda ) \left\{ (\vecu_1,\vecu_2) \in U^2 : \|\vece_1 E(\vecu_1)^{-1} - \vece_1 E(\vecu_2)^{-1}\| \le \theta_0 \right\} \label{eq:close_intermediate} ,
\end{align}
for $\theta_0$ sufficiently small, since $\widetilde E(U)$ is contained in a hemisphere (recall the assumptions following \eqref{Etilde}).
Let $\overline M$ be the Lipschitz constant of the inverse of the map $U\to \widetilde E(U)\subset \S^{d-1}_1$, $\vecu \mapsto \vece_1 E(\vecu)^{-1}$. Then, using the fact that $\lambda$ has  density $\lambda'\in L^1(U,d\vecu)$, we  bound \eqref{eq:close_intermediate} by
\begin{align}
 (\lambda\times\lambda)& \left\{ (\vecu_1,\vecu_2)\in U^2: \|\vecu_1 - \vecu_2\| \le \overline M\theta_0 \right\}    
  \\&\le
 (\lambda\times\lambda) \left\{ (\vecu_1,\vecu_2)\in U^2: \|\vecu_1 - \vecu_2\| \le \overline M\theta_0 , \max\{ \lambda'(\vecu_1), \lambda'(\vecu_2)\} \le K\right\} \\
 &\phantom{\ll} 
 +(\lambda\times\lambda) \left\{ (\vecu_1,\vecu_2)\in U^2: \|\vecu_1 - \vecu_2\| \le \overline M\theta_0 , \max\{ \lambda'(\vecu_1), \lambda'(\vecu_2)\} > K \right\} \\
 & \le 
 K^2(\leb\times\leb) \left\{ (\vecu_1,\vecu_2)\in U^2 : \|\vecu_1 - \vecu_2\| \le \overline M\theta_0 \right\}\\
 &\phantom{\ll}
 +\lambda\left\{\vecu\in U : \lambda'(\vecu) >K\right\}\\
 &\ll_U K^2 \overline M^{d-1} \theta_0^{d-1} + \frac1{K} \label{eq:close_regime}
\end{align}
for any $K\ge 1$, where the implied constant depends only on $U$.
If we pick $K = \theta_0^{-(d-1)/3}$, we get the bound $\theta_0^{(d-1)/3}$ for this regime. 
Consider the complementary case, $\|\vece_1 E(\vecu_1)^{-1} \pm \vece_1 E(\vecu_2)^{-1}\| > \theta_0$. Recall that for every $i$, the closure of $A_i$ does not intersect a $\zeta$-neighborhood of the hyperplane $\{x_1=0\}$, and $A_i$ is contained in a ball of radius $1/\zeta$. This implies that the set $A_i \Phi^{-t} E(\vecu)^{-1}$ asymptotically aligns in direction $\pm\vece_1 E(\vecu)^{-1}$, avoiding a $\e^{(d-1)t}$-neighborhood of the origin. More precisely, there is a constant $C_\zeta>0$ such that
\begin{equation}
\| \vecq_1 \Phi^{-t} E(\vecu_1)^{-1} -\vecq_2 \Phi^{-t} E(\vecu_2)^{-1}\| \ge C_\zeta \theta_0 \e^{(d-1)t} 
\end{equation}
for all $\vecq_1\in A_i$, $\vecq_2\in A_j$ and all $i,j$. Hence for $\vecm_1,\vecm_2$ defined by
\begin{equation}
\vecq_1 = (\vecm_1+\vecxi) M E(\vecu_1) \Phi^t,\qquad \vecq_2 = (\vecm_2+\vecxi) M E(\vecu_2) \Phi^t,
\end{equation}
we have
\begin{equation}
\| \vecm_1 -\vecm_2 \| \ge c_M \| (\vecm_1 -\vecm_2)M \| \ge c_M C_\zeta \theta_0 \e^{(d-1)t} .
\end{equation}
This shows that the lattice points $\vecm_1,\vecm_2\in\ZZ^d$ that contribute to $\vecu_1$ and $\vecu_2$ respectively, are at distance at least $c_{M} C_\zeta \theta_0 e^{(d-1)t}$ apart. Thus, by strong mixing,
\begin{align}
\int_\Omega \PP\big( & \Xi_{t,1}^\omega D_i=r_i,\; \Xi_{t,2}^\omega D_i=r_i \;\forall i \;\big|\; \|\vece_1 E(\vecu_1)^{-1} \pm \vece_1 E(\vecu_2)^{-1}\| > \theta_0 \big) d\nu(\omega)
\\ & = \PP\big( \Xi_{t,1} D_i=r_i,\; \Xi_{t,2} D_i=r_i \;\forall i \;\big|\; \|\vece_1 E(\vecu_1)^{-1} \pm \vece_1 E(\vecu_2)^{-1}\| > \theta_0 \big) \\
& + O\big(\alpha(c_M C_\zeta \theta_0 e^{(d-1)t})\big),
\end{align}
where $\Xi_{t,1}$, $\Xi_{t,2}$ are independent copies of $\Xi_t$, and the implicit constant in the error term is independent of the choice of $\lambda$ and of $D_1,\ldots,D_n\in\curC_\zeta$.
Estimate \eqref{eq:close_regime} yields
\begin{align}
\PP\big( & \Xi_{t,1} D_i=r_i,\; \Xi_{t,2} D_i=r_i \;\forall i \;\big|\; \|\vece_1 E(\vecu_1)^{-1} \pm \vece_1 E(\vecu_2)^{-1}\| > \theta_0 \big) \\
& = \PP\big( \Xi_{t,1} D_i=r_i,\; \Xi_{t,2} D_i=r_i \;\forall i \big) +O(\theta_0^{(d-1)/3}) \\
& = \PP\big( \Xi_t D_i=r_i \big)^2 +O(\theta_0^{(d-1)/3}) .
\end{align}

Altogether we therefore have
\begin{align}
\sup_{\lambda,D_1,\ldots,D_n} V^2(t) \ll \alpha(c_M C_\zeta  \theta_0 e^{(d-1)t})+\theta_0^{(d-1)/3},\end{align}
where the supremum is taken over all a.c.\ $\lambda$ and all $D_1,\ldots,D_n\in\curC_\zeta$.
If we choose $\theta_0 = e^{-\gamma t}$ for any $\gamma \in (0,d-1)$, we get 
\begin{align}
 \sup_{\lambda,D_1,\ldots,D_n} \sum_{t\in \delta \N} V^2(t)
   \ll \sum_{t\in \delta \N} \left( \alpha\left(c_M C_\zeta  e^{ (d-1-\gamma) t} \right) + e^{-\gamma(d-1) t/3}\right) < \infty
\end{align}
for every $\delta>0$ by the slog-mixing assumption \eqref{eq:mixing_rate0} and monotonicity of $\alpha$. All of the above estimates are uniform in $\lambda$ and $D_1,\ldots,D_n\in\curC_\zeta$.

From the Borel-Cantelli Lemma we conclude that, for every $\eps>0$, 
\begin{align}
 \nu\left\{\omega\in\Omega :  \sup_{\lambda,D_1,\ldots,D_n} \left\lvert  
 \PP\big( \Xi_{k\delta}^\omega D_i=r_i \;\forall i \big) - \PP\big( \Xi_{k\delta} D_i=r_i \;\forall i \big) \right\rvert > \eps
 \text{ for i.m. }k\in\N\right\} = 0 .
\end{align}
Now choose $\delta>0$ and $k_0$ such that for all $k\ge k_0$, $0\le s < \delta$,
\begin{equation}
\sup_{\lambda,D_1,\ldots,D_n} \big| \PP\big( \Xi_{k\delta}^\omega D_i=r_i \;\forall i \big) - \PP\big( \Xi_{k\delta+s}^\omega D_i=r_i \;\forall i \big) \big|
< \frac{\eps}{2}.
\end{equation}
This is possible in view of Lemma \ref{lem:close_t}, since
\begin{equation}
\big| \PP\big( \Xi_{k\delta}^\omega D_i=r_i \;\forall i \big) - \PP\big( \Xi_{k\delta+s}^\omega D_i=r_i \;\forall i \big) \big|
\le \sum_i \PP\big( \Xi_{k\delta}^\omega D_i\neq \Xi_{k\delta+s}^\omega D_i \big) .
\end{equation}
This shows that (set $t=k\delta+s$)
\begin{equation}\label{lsst}
 \nu\left\{\omega\in\Omega :  \sup_{\lambda,D_1,\ldots,D_n}  \left\lvert  
\PP\big( \Xi_t^\omega D_i=r_i \;\forall i \big) - \PP\big( \Xi_{\delta \lfloor t/\delta \rfloor} D_i=r_i \;\forall i \big) \right\rvert > \frac{\eps}{2}
 \text{ for i.m. } t\in\RR_+\right\} = 0.
\end{equation}
By Proposition \ref{prop:ms_special22}, for every a.c.\ $\lambda$ and all $D_1,\ldots,D_n\in\curC_\zeta$,
\begin{equation}
\lim_{t\to\infty} \PP\big( \Xi_{\delta \lfloor t/\delta \rfloor} D_i=r_i \;\forall i \big) = \PP\big( \Xi D_i=r_i \;\forall i \big) .
\end{equation}
Hence \eqref{lsst} implies that there is a set $\Omega_{\zeta,n}$ of full measure, such that for every $\omega\in\Omega_{\zeta,n}$, all a.c.\ $\lambda$ and all $D_1,\ldots,D_n\in\curC_\zeta$,
\begin{equation} \label{this7}
\lim_{t\to\infty} \PP\big( \Xi_t^\omega D_i=r_i \;\forall i \big)  = \PP\big( \Xi D_i=r_i \;\forall i \big) .
\end{equation}
Corollary \ref{cor:ms_special}, Lemma \ref{lem:onepoint} and Chebyshev's inequality imply that for every $D_0=A_0\times B_0$ we have
\begin{equation}
\limsup_{t\to\infty} \PP( \Xi_t^\omega D_0 \ge 1 ) \le \limsup_{t\to\infty} \PP( \Theta_t A_0 \ge 1 ) \le \PP(\Theta \overline A_0 \le 1) \le \leb \overline A_0
\end{equation}
for all $\omega\in\Omega$. That is, the probability of having at least one point in a small-measure set is small, which shows that \eqref{this7} in fact holds for all sets of the form $D_i=A_i\times B_i$ with $A_i\in\curB(\RR^d)$ bounded and  $B_i\in\curB(Y)$ such that $(\leb\times\rho)\d (A_i\times B_i)=0$, provided
\begin{equation}
\omega\in \Omega_n:=\bigcap_{k=1}^\infty \Omega_{1/k,n}. 
\end{equation}
The convergence in \eqref{this7} holds for all $n$ for a given $\omega$, if 
\begin{equation}
\omega\in \Omega_0:=\bigcap_{n=1}^\infty \Omega_n, 
\end{equation}
which still is a set of full measure. The extension of \eqref{this7} from product sets $A_i\times B_i$ to general sets $D_i$ follows from a standard approximation argument.
\end{proof}

\begin{proof}[Proof of Proposition \ref{prop:fixed_omega2}]
This is identical to the proof of Proposition \ref{prop:fixed_omega}, with Proposition \ref{prop:ms_special22} replaced by Proposition \ref{prop:ms_special22b}. 
\end{proof}

We conclude this section with two corollaries of Theorem \ref{th:fixed_omega}.

\begin{cor}\label{cor:gen}
Under the assumptions of Theorem \ref{th:fixed_omega}, for every $\omega\in\Omega_0$, every a.c.\ $\lambda$, and every bounded continuous $f:\RR^{d-1} \times \scrX_\vecxi \to \RR$, 
\begin{equation}
\lim_{t\to\infty} \int_{U} f( \vecu, \Gamma ((\1,\vecxi)M E(\vecu) \Phi^t, \omega )) \lambda(d\vecu) 
=  \int_{U\times\scrX_\vecxi} f \, d\lambda \times \begin{cases}
d\tmu_{\vecxi,\rho} & (\vecxi\notin\ZZ^d)\\
d\tmu_{0,\rho_0,\rho} & (\vecxi\in\ZZ^d),
\end{cases}
\end{equation}
where $\rho_0=\delta_{\omega(-\vecxi)}$.
\end{cor}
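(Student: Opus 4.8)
The plan is to deduce Corollary~\ref{cor:gen} from Theorem~\ref{th:fixed_omega} by a soft approximation argument. Two features of that theorem will be used: it holds with one and the \emph{same} full-measure set $\Omega_0$ for \emph{every} a.c.\ Borel probability measure $\lambda$ on $U$; and, for a fixed $\omega\in\Omega_0$, it asserts $P_t^\omega\toweak\tmu_\infty$, where I write $\tmu_\infty$ for $\tmu_{\vecxi,\rho}$ when $\vecxi\notin\ZZ^d$ and for $\tmu_{0,\rho_0,\rho}$ (with $\rho_0=\delta_{\omega(-\vecxi)}$) when $\vecxi\in\ZZ^d$. Writing $x_t(\vecu):=\Gamma((\1,\vecxi)ME(\vecu)\Phi^t,\omega)$, the measure $P_t^\omega$ is precisely the push-forward of $\lambda$ under $\vecu\mapsto x_t(\vecu)$, so the left-hand side of the corollary equals $\int_U f(\vecu,x_t(\vecu))\,\lambda(d\vecu)$, and the goal is to show it converges to $\int_{U\times\scrX_\vecxi}f\,d(\lambda\times\tmu_\infty)$.

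First I would treat \emph{elementary} integrands $f(\vecu,x)=\1_V(\vecu)h(x)$ with $V\subset U$ Borel, $\lambda(V)>0$, and $h\in C_b(\scrX_\vecxi)$. Then $\lambda_V:=\lambda(V)^{-1}\lambda|_V$ is again an a.c.\ probability measure on $U$, so Theorem~\ref{th:fixed_omega} applies to $\lambda_V$ (legitimately, because $\omega\in\Omega_0$ serves all a.c.\ measures) and gives $\int_U h(x_t(\vecu))\,\lambda_V(d\vecu)\to\int h\,d\tmu_\infty$; multiplying by $\lambda(V)$ yields $\int_U\1_V(\vecu)h(x_t(\vecu))\,\lambda(d\vecu)\to\lambda(V)\int h\,d\tmu_\infty=\int_{U\times\scrX_\vecxi}\1_V\otimes h\,d(\lambda\times\tmu_\infty)$. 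By linearity the convergence then holds for every finite sum $\sum_j\1_{V_j}(\vecu)h_j(x)$, in particular for the step functions $\varphi(\vecu,x)=\sum_j\1_{V_j}(\vecu)f(\vecu_j,x)$ produced below, each $f(\vecu_j,\cdot)$ being bounded and continuous on $\scrX_\vecxi$ since $f$ is continuous on $\RR^{d-1}\times\scrX_\vecxi$.

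Next I would approximate a general bounded continuous $f$ by such a step function on a large compact set. Fix $\omega\in\Omega_0$, an a.c.\ $\lambda$, and $\eps>0$. The space $\scrX_\vecxi$ is Polish (being the quotient of a Polish space by the proper action of a countable discrete group), and $P_t^\omega\toweak\tmu_\infty$, so the family $\{\tmu_\infty\}\cup\{P_t^\omega:t\ge0\}$ is tight; choose a compact $\scrK\subset\scrX_\vecxi$ and $t_0$ with $\tmu_\infty(\scrX_\vecxi\setminus\scrK)<\eps$ and $P_t^\omega(\scrX_\vecxi\setminus\scrK)<\eps$ for $t\ge t_0$. Since $f$ is uniformly continuous on the compact set $\overline U\times\scrK$, pick $\delta>0$ with $|f(\vecu,x)-f(\vecu',x)|<\eps$ whenever $\|\vecu-\vecu'\|<\delta$ and $x\in\scrK$; partition $U$ into finitely many Borel sets $V_j$ of diameter $<\delta$ (discarding those with $\lambda(V_j)=0$), pick $\vecu_j\in V_j$, and set $\varphi(\vecu,x)=\sum_j\1_{V_j}(\vecu)f(\vecu_j,x)$, so that $|f-\varphi|<\eps$ on $\overline U\times\scrK$ while $\|\varphi\|_\infty\le\|f\|_\infty$ everywhere. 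Splitting $\int_U(f-\varphi)(\vecu,x_t(\vecu))\,\lambda(d\vecu)$ according to whether $x_t(\vecu)\in\scrK$, and using $P_t^\omega(\scrX_\vecxi\setminus\scrK)<\eps$, bounds it by $(1+2\|f\|_\infty)\eps$ for $t\ge t_0$; the analogous split bounds $|\int(f-\varphi)\,d(\lambda\times\tmu_\infty)|$ by $(1+2\|f\|_\infty)\eps$. Combining these with $\int_U\varphi(\vecu,x_t(\vecu))\,\lambda(d\vecu)\to\int\varphi\,d(\lambda\times\tmu_\infty)$ from the first step gives $\limsup_{t\to\infty}\big|\int_U f(\vecu,x_t(\vecu))\,\lambda(d\vecu)-\int f\,d(\lambda\times\tmu_\infty)\big|\le2(1+2\|f\|_\infty)\eps$, and letting $\eps\to0$ finishes it. The case $\vecxi\in\ZZ^d$ is verbatim the same, with $\tmu_{0,\rho_0,\rho}$ in place of $\tmu_{\vecxi,\rho}$, because Theorem~\ref{th:fixed_omega} already supplies that limit.

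There is essentially no genuine obstacle: the whole argument is routine once one notes the one structural fact, namely that Theorem~\ref{th:fixed_omega} delivers a single exceptional set $\Omega_0$ valid for all a.c.\ $\lambda$ simultaneously. This is exactly what makes it legitimate to renormalise $\lambda$ on each partition piece $V_j$ and invoke the theorem for $\lambda_{V_j}$ without ever enlarging the null set; the remaining ingredients --- tightness of a weakly convergent sequence on a Polish space, and the step-function approximation --- are standard.
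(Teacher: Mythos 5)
Your proof is correct and is essentially the argument the paper has in mind (its proof simply defers to the argument of Theorem 5.3 in \cite{marklof_strombergsson_free_path_length_2010}): partition $U$ into small pieces, freeze $\vecu=\vecu_j$ on each piece, and apply Theorem \ref{th:fixed_omega} to the normalized restrictions $\lambda|_{V_j}/\lambda(V_j)$, which is legitimate precisely because the exceptional set $\Omega_0$ is independent of the a.c.\ measure $\lambda$. The only point to tidy is the uniform-in-$t$ tightness of the continuous-parameter family $\{P_t^\omega\}$: either observe that $t\mapsto P_t^\omega$ is weakly continuous (dominated convergence), so $\{P_t^\omega : t\ge 0\}\cup\{\text{limit measure}\}$ is weakly compact and hence tight by Prokhorov's theorem, or reduce the claimed limit to arbitrary sequences $t_n\to\infty$, where the standard fact that a weakly convergent sequence on a Polish space is tight applies directly.
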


\begin{proof}
This follows from Theorem \ref{th:fixed_omega} by the same argument as in the proof of Theorem 5.3 in \cite{marklof_strombergsson_free_path_length_2010}.
\end{proof}

Let us assume that there is a continuous map $\varphi:U\times \Omega\to \Omega$.
Then the following is an immediate consequence of Corollary \ref{cor:gen}.

\begin{cor}\label{cor:gen1}
Under the assumptions of Theorem \ref{th:fixed_omega}, for every $\omega\in\Omega_0$, every a.c.\ $\lambda$, and every bounded continuous $f:\RR^{d-1} \times \scrX_\vecxi \to \RR$, 
\begin{multline}
\lim_{t\to\infty}  \int_{U} f(\Gamma ((\1,\vecxi)M E(\vecu) \Phi^t, \varphi(\vecu, \omega ))) \lambda(d\vecu) 
\\ =  \int_{U\times\scrX_\vecxi} f(\Gamma(g,\varphi(\vecu,\omega'))) \, d\lambda(\vecu) \times \begin{cases}
d\tmu_{\vecxi,\rho}(g,\omega') & (\vecxi\notin\ZZ^d)\\
d\tmu_{0,\rho_0,\rho}(g,\omega') & (\vecxi\in\ZZ^d),
\end{cases}
\end{multline}
where $\rho_0=\delta_{\omega(-\vecxi)}$.
\end{cor}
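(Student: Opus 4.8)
The plan is to deduce the identity directly from Corollary \ref{cor:gen} by absorbing $\varphi$ into the test function. Treating first the case $\vecxi\notin\ZZ^d$, I would introduce the \emph{marking--substitution map}
\[
\Lambda\colon U\times\scrX_\vecxi\to\scrX_\vecxi,\qquad\big(\vecu,\Gamma(g,\omega)\big)\longmapsto\Gamma\big(g,\varphi(\vecu,\omega)\big),
\]
which keeps the underlying affine lattice and replaces its marking by the $\varphi(\vecu,\cdot)$-image, and set $F:=f\circ\Lambda$, a function on $\RR^{d-1}\times\scrX_\vecxi$. By construction the integrand on the left-hand side of the asserted equality is $F\big(\vecu,\Gamma((\1,\vecxi)ME(\vecu)\Phi^t,\omega)\big)$; granting for the moment that $F$ is bounded and continuous, Corollary \ref{cor:gen} applied to $F$ gives
\[
\lim_{t\to\infty}\int_U F\big(\vecu,\Gamma((\1,\vecxi)ME(\vecu)\Phi^t,\omega)\big)\,\lambda(d\vecu)=\int_{U\times\scrX_\vecxi}F\,d\lambda\times d\tmu_{\vecxi,\rho},
\]
and unwinding $F=f\circ\Lambda$ on the right-hand side reproduces $\int_{U\times\scrX_\vecxi}f\big(\Gamma(g,\varphi(\vecu,\omega'))\big)\,d\lambda(\vecu)\times d\tmu_{\vecxi,\rho}(g,\omega')$, which is the claim.

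Hence the whole proof reduces to checking that $F\in C_b(\RR^{d-1}\times\scrX_\vecxi)$. Boundedness is immediate from that of $f$, so the point is the continuity of $\Lambda$: its lift $(\vecu,(g,\omega))\mapsto(g,\varphi(\vecu,\omega))$ on $U\times(G_\vecxi\times\Omega)$ is continuous because $\varphi$ is, and it respects the $\Gamma$-action on the marked-lattice factor---the substitution $\omega\mapsto\varphi(\vecu,\omega)$ commuting with the relabellings $\omega\mapsto\omega_\gamma$---so that $\Lambda$ is well defined on the quotient and, as $\scrX_\vecxi$ carries the quotient topology, continuous there; then $F=f\circ\Lambda$ is continuous as a composition.

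The case $\vecxi\in\ZZ^d$ is handled identically, with $\scrX_\vecnull$, $\Gamma_0$ and $\tmu_{0,\rho_0,\rho}$ (where $\rho_0=\delta_{\omega(-\vecxi)}$) replacing $\scrX_\vecxi$, $\Gamma$ and $\tmu_{\vecxi,\rho}$, and invoking the corresponding clause of Corollary \ref{cor:gen}; the only difference is that $\Lambda$ now descends through the $\Gamma_0$-quotient. I expect the sole genuine point to be precisely this continuity and well-definedness of the marking--substitution map; once that is in hand the result is a one-line substitution into Corollary \ref{cor:gen} with no new dynamical input, which is why it is recorded as an immediate corollary.
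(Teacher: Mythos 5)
Your proposal is exactly the paper's argument: the paper proves this corollary in one line by applying Corollary \ref{cor:gen} to the test function $\tilde f(\vecu,\Gamma(g,\omega))=f(\Gamma(g,\varphi(\vecu,\omega)))$, which is precisely your $F=f\circ\Lambda$. Your additional check that this $\tilde f$ is bounded, continuous, and well defined on the quotient (which implicitly requires $\varphi(\vecu,\cdot)$ to intertwine the relabellings $\omega\mapsto\omega_\gamma$, as it does in the paper's applications where $\varphi$ acts pointwise on marks) is a detail the paper leaves tacit, so the two proofs coincide.
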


\begin{proof}
Apply Corollary \ref{cor:gen} with the test functions $\tilde f$ defined by
\begin{equation}
\tilde f(\vecu, \Gamma (g,\omega)) = f(\Gamma (g, \varphi(\vecu, \omega ))) .
\end{equation}
\end{proof}

\section{Random defects \label{sec:defect}}

Spherical averages were used in \cite{marklof_strombergsson_free_path_length_2010} and \cite{marklof_strombergsson_free_path_length_2014} to establish the limit distribution for the free path length in crystals and quasicrystals, respectively. The plan for the remainder of this paper is to explain how spherical averages on marked lattices can be exploited to yield the path length distribution for crystals with random defects. The idea is to start with a perfect crystal, whose scatterers are located at the vertices of an affine Euclidean lattice $\scrL=\Z^d (\1,\vecxi)M$, and then remove or shift each lattice point with a given probability. This can be encoded by a marking of $\scrL$ as follows. The set of marks is $Y=\{0,1\} \times \RR^d$, where the first coordinate describes the absence  or presence of a lattice point, and the second its relative shift measured in units of $r=\e^{-t}$. The corresponding marking is denoted by $\omega=(a,\vecz)$ with $a:\ZZ^d \to \{0,1\}$ and $\vecz:\ZZ^d \to \RR^d$.
The {\em defect affine lattice} is thus
\begin{equation}
\{ (\vecm+\vecxi)M + r \vecz(\vecm) : \vecm\in\ZZ^d \text{\ s.t.\ } a(\vecm)=1 \}.
\end{equation}
In the case when $\vecxi\in\ZZ^d$, it is natural to shift the above point set by $-r\vecz(-\vecxi)$ so that the shifted set contains the origin. To unify notation, let us therefore define the field $\vecz_\vecxi$ by
\begin{equation}
\vecz_\vecxi(\vecm)=
\begin{cases}
\vecz(\vecm)-\vecz(-\vecxi) & (\vecxi\in\ZZ^d) \\
\vecz(\vecm) & (\vecxi\notin\ZZ^d).
\end{cases}
\end{equation}
In fact, for our application to the Lorentz gas, it will be convenient to shift the point set by a more general vector $r\vecbeta$, where $\vecbeta$ is a fixed bounded continuous function $U\to\RR^d$; we denote the shifted set (for all $\vecxi\in\RR^d$) by
\begin{equation}
\widetilde\scrP_{r,\vecu}=\{ (\vecm+\vecxi)M + r [\vecz_\vecxi(\vecm)-\vecbeta(\vecu)] : \vecm\in\ZZ^d \text{\ s.t.\ } a(\vecm)=1 \} .
\end{equation}
As in the case of lattices \eqref{scrL}, we are interested in the rotated-stretched point set $\scrP_{t,\vecu}=\widetilde\scrP_{r,\vecu} E(\vecu)\Phi^t$, which reads explicitly (for $r=\e^{-t}$)
\begin{multline}
\scrP_{t,\vecu}=\{ (\vecm+\vecxi)M E(\vecu)\Phi^t + ([\vecz_\vecxi(\vecm)-\vecbeta(\vecu)]E(\vecu))_\perp \\ +\e^{-d t} (\vece_1\cdot [\vecz_\vecxi(\vecm)-\vecbeta(\vecu)]E(\vecu)) \vece_1 : \vecm\in\ZZ^d \text{\ s.t.\ } a(\vecm)=1 \},
\end{multline}
where $(\,\cdot\,)_\perp$ is the orthogonal projection onto the hyperplane perpendicular to $\vece_1$. 

We map the marked affine lattice (viewed as an element in $\scrX$) to a defect lattice (viewed as an element in $\scrM(\RR^d)$) by
\begin{equation}
\sigma \colon \scrX \to \scrM(\RR^d), \qquad \Gamma(g,\omega) \mapsto
\sum_{\vecy\in\ZZ^d g} a(\vecy g^{-1})\, \delta_{\vecy+\vecz(\vecy g^{-1})} \qquad (\vecxi\notin\ZZ^d),
\end{equation}
\begin{equation}
\sigma_0 \colon \scrX_\vecnull \to \scrM(\RR^d), \qquad \Gamma(g,\omega) \mapsto
\sum_{\vecy\in\ZZ^d g\setminus\{\vecnull\}} a(\vecy g^{-1})\, \delta_{\vecy+\vecz(\vecy g^{-1})}
\qquad (\vecxi\in\ZZ^d), 
\end{equation}
where $\omega=(a,\vecz)$.
The motivation for this definition is as follows. Define the family of maps $J_t:\Omega\to\Omega$ by \begin{equation}
J_t(\omega) = J_t(a,\vecz)= \big(a,\vecz_\perp +\e^{-d t} (\vece_1\cdot \vecz)\vece_1\big) 
\end{equation}
and (for later use)
\begin{equation}
J_\infty(\omega) = J_\infty(a,\vecz)= (a,\vecz_\perp) .
\end{equation}
Then
\begin{equation}\label{sigma}
\sigma\big( \Gamma \big((\1,\vecxi)M E(\vecu)\Phi^t , J_t(a,[\vecz_\vecxi-\vecbeta(\vecu)] E(\vecu) \big) \big)= \sum_{\vecy\in \scrP_{t,\vecu}} \delta_\vecy 
\end{equation}
and, for $\vecxi\in\ZZ^d$, 
\begin{equation}\label{sigma0}
\sigma_0 \big( \Gamma \big( (\1,\vecxi)M E(\vecu)\Phi^t , J_t(a,[\vecz_\vecxi-\vecbeta(\vecu)] E(\vecu) \big) \big) = \sum_{\vecy\in \scrP_{t,\vecu}\setminus\{\vecnull\}} \delta_\vecy .
\end{equation}
 
We will first discuss the relevant spherical averages in $\scrX_\vecxi$, and then show they map to the above point processes. 
 
\begin{theorem}\label{thm:defect}
Assume $\eta$ is  slog-mixing with asymptotic distribution $\rho$, and $\rho$ has compact support. Fix $M\in G_0$ and $\vecxi\in\RR^d$. Then there exists a set $\Omega_0\subset\Omega$ with $\nu(\Omega_0)=1$, such that for every $\omega=(a,\vecz)\in\Omega_0$, every a.c.\ $\lambda$ and every bounded continuous $f:\RR^{d-1} \times \scrX_\vecxi \to \RR$, 
\begin{multline}\label{eq:defect}
\lim_{t\to\infty} \int_{U} f\big(\Gamma \big((\1,\vecxi)M E(\vecu) \Phi^t, J_t(a,[\vecz_\vecxi-\vecbeta(\vecu)])\big)\big) \lambda(d\vecu) 
\\ =  \int_{\scrX_\vecxi} f\big(\Gamma\big(g,J_\infty(a',[\vecz_\vecxi'-\vecbeta(\vecu)] E(\vecu))\big)\big) \, d\lambda(\vecu) \times \begin{cases}
d\tmu_{\vecxi,\rho}(g,(a',\vecz')) & (\vecxi\notin\ZZ^d)\\
d\tmu_{0,\rho_0,\rho}(g,(a',\vecz')) & (\vecxi\in\ZZ^d),
\end{cases}
\end{multline}
where $\rho_0=\delta_{\omega(-\vecxi)}$.
\end{theorem}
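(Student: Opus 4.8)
The right-hand side of \eqref{eq:defect} is, by Corollary~\ref{cor:gen1} applied to the fixed continuous recoding $\varphi_\infty(\vecu,(a,\vecz)):=J_\infty\big(a,[\vecz_\vecxi-\vecbeta(\vecu)]E(\vecu)\big)$, equal to $\lim_{t\to\infty}\int_U f\big(\Gamma((\1,\vecxi)ME(\vecu)\Phi^t,\varphi_\infty(\vecu,\omega))\big)\,\lambda(d\vecu)$ for every $\omega$ in the full-measure set $\Omega_0$ of Theorem~\ref{th:fixed_omega}; continuity of $\varphi_\infty$ for the product topology on $\Omega$ is clear, since its $\vecm$-th coordinate depends continuously on $\vecu$ and on $(a,\vecz)$. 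It therefore remains to prove that the $t$-dependent recoding $\varphi_t(\vecu,(a,\vecz)):=J_t\big(a,[\vecz_\vecxi-\vecbeta(\vecu)]E(\vecu)\big)$ yields the same limit; as $\varphi_t$ is not fixed, Corollary~\ref{cor:gen1} does not apply verbatim, and the plan is to re-run the proof of Theorem~\ref{th:fixed_omega} (Propositions~\ref{prop:fixed_omega}, \ref{prop:fixed_omega2}) with $\varphi_t$ in place of a fixed recoding. As there, via the embeddings $\kappa$, $\kappa_0$ (Proposition~\ref{topprop2}) and \cite[Theorem~16.16]{Kallenberg02}, this reduces to the convergence, for $\nu$-a.e.\ $\omega=(a,\vecz)$, of the finite-dimensional distributions of $\kappa\big(\Gamma((\1,\vecxi)ME(\vecu)\Phi^t,\varphi_t(\vecu,\omega))\big)$ (as $\vecu\sim\lambda$) towards those of the random measure on the right-hand side of \eqref{eq:defect}, i.e.\ the $\kappa$-image of $\Gamma(g,\varphi_\infty(\vecu,\omega'))$ with $(\vecu,(g,\omega'))$ distributed according to $\lambda\times\tmu_{\vecxi,\rho}$ — a process whose marks lie in a fixed ball because $\rho$ has compact support, and whose moment formulas follow from Lemma~\ref{lem:fst} (resp.\ Lemma~\ref{lem:scd}) and the i.i.d.\ structure of $\nu_\rho$.

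For a fixed realization $(a,\vecz)$ and a test set $A_i\times B_i$, the event that a lattice point $\vecm(\1,\vecxi)ME(\vecu)\Phi^t\in A_i$ carries a $\varphi_t$-recoded mark lying in $B_i$ depends only on $\eta(\vecm)=(a(\vecm),\vecz(\vecm))$: it reads $\eta(\vecm)\in B_{i,t,\vecu}$ for a Borel set $B_{i,t,\vecu}\subset Y$ that depends continuously on $\vecu$ and converges, as $t\to\infty$, to the set $B_{i,\infty,\vecu}$ obtained with $\varphi_\infty$. Running the mixing-expansion of Propositions~\ref{prop:ms_special22}, \ref{prop:ms_special22b} — the lattice points contributing to the bounded set $A_i$ being $\eps\,\e^{(d-1)t}$-separated after removing a set $U_1^{(\eps)}$ of small measure, hence asymptotically independent by mixing of all orders — now produces Bernoulli-type products with the $\vecu$-dependent success probabilities $\rho(B_{i,\infty,\vecu})$, up to the usual errors $O_L\big(\vartheta_{nL}(\cdot)\big)$ and $O_L\big(\beta_\vecxi(\cdot)\big)$, together with one new term bounding $\big|\rho(B_{i,t,\vecu})-\rho(B_{i,\infty,\vecu})\big|$. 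Here the compact support of $\rho$ (say $\supp\rho\subset\{0,1\}\times\overline{B(\vecnull,R)}$) is essential: the passage from $J_\infty$ to $J_t$ displaces the $\RR^d$-part of each mark by at most $\e^{-dt}(\|\vecz\|+\sup_U\|\vecbeta\|)$, hence by at most $\e^{-dt}(R+\sup_U\|\vecbeta\|)$ on $\supp\rho$, so $\rho(B_{i,t,\vecu}\triangle B_{i,\infty,\vecu})$ is bounded by the $\rho$-mass of those $(a,\vecz)\in\supp\rho$ whose $\varphi_\infty$-recoded mark lies within $\e^{-dt}(R+\sup_U\|\vecbeta\|)$ of $\d B_i$ — which tends to $0$ uniformly in $\vecu$, since $(\leb\times\rho)\d(A_i\times B_i)=0$ forces $\rho(\d B_{i,\infty,\vecu})=0$ for $\lambda$-a.e.\ $\vecu$. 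Averaging the resulting finite-dimensional distributions over $\omega$ and using Siegel's formula (through Lemmas~\ref{lem:fst}, \ref{lem:scd}) identifies the limit with the claimed one, exactly as in Section~\ref{sec:main0}.

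The main obstacle is to push the second-moment estimate of Proposition~\ref{prop:fixed_omega} through in this twisted setting. Introducing two independent copies with rotation parameters $\vecu_1,\vecu_2\sim\lambda$, one splits according to whether $\min_\pm\|\vece_1E(\vecu_1)^{-1}\pm\vece_1E(\vecu_2)^{-1}\|\le\theta_0$ or not; the first regime is bounded $\lambda$-uniformly by $O(\theta_0^{(d-1)/3})$ via the density-truncation device \eqref{eq:close_regime}, and in the second the contributing lattice points for $\vecu_1$ and $\vecu_2$ are $\gg\theta_0\,\e^{(d-1)t}$-separated, so strong mixing decouples the two copies with an error $O\big(\alpha(c\,\theta_0\,\e^{(d-1)t})\big)$; crucially, the $\varphi_t$-recoding is a uniformly bounded, uniformly Lipschitz map of the marks on $\supp\rho$, so it affects neither the separation estimate nor the mixing decoupling. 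With $\theta_0=\e^{-\gamma t}$, $0<\gamma<d-1$, this gives $\sup_{\lambda,D_1,\dots,D_n}V^2(t)\ll\alpha(c\,\e^{(d-1-\gamma)t})+\e^{-\gamma(d-1)t/3}$, which is summable over $t\in\delta\N$ by the slog-mixing hypothesis \eqref{eq:mixing_rate0}; Borel--Cantelli, Lemma~\ref{lem:close_t} (to interpolate between the lattice points $t\in\delta\N$) and the $\vecu$-averaging device of Corollary~\ref{cor:gen} then deliver \eqref{eq:defect} for $\nu$-a.e.\ $\omega$. The case $\vecxi\in\ZZ^d$ is identical, with $\kappa_0$, $\tmu_{0,\rho_0,\rho}$ and $\rho_0=\delta_{\omega(-\vecxi)}$ replacing $\kappa$, $\tmu_{\vecxi,\rho}$.
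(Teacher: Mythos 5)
Your first paragraph coincides with the paper's proof: Corollary \ref{cor:gen1}, applied to the fixed continuous recoding $\varphi_\infty(\vecu,(a,\vecz))=J_\infty\big(a,[\vecz_\vecxi-\vecbeta(\vecu)]E(\vecu)\big)$, is exactly how the right-hand side of \eqref{eq:defect} arises. Where you diverge is in handling the $t$-dependence of $J_t$. The paper does not re-run any of the Section \ref{sec:main0}--\ref{sec:main} machinery: it observes that, since $\rho$ has compact support and $\vecbeta$ is bounded, $\sup_{\vecu\in U}\sup_{\vecm\in\ZZ^d}|\vece_1\cdot[\vecz_\vecxi(\vecm)-\vecbeta(\vecu)]E(\vecu)|<\infty$, so $J_t(a,[\vecz_\vecxi-\vecbeta(\vecu)]E(\vecu))\to J_\infty(a,[\vecz_\vecxi-\vecbeta(\vecu)]E(\vecu))$ uniformly in $\vecu$ and $\vecm$ (each mark is displaced by $O(\e^{-dt})$), and this uniform perturbation of the marking does not change the limit of the spherical average; Corollary \ref{cor:gen1} then yields \eqref{eq:defect} directly. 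You actually have this displacement bound in hand (your $\e^{-dt}(R+\sup_U\|\vecbeta\|)$ estimate), but you deploy it only inside a wholesale re-derivation of Propositions \ref{prop:ms_special22}, \ref{prop:ms_special22b}, \ref{prop:fixed_omega} with $t$- and $\vecu$-dependent recoded mark sets $B_{i,t,\vecu}$, rather than using it to replace $J_t$ by $J_\infty$ in the test function once and for all.

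Your heavier route is plausibly workable — the mixing coefficients $\alpha$, $\vartheta_k$, $\beta_\vecxi$ are defined by suprema over all Borel events, so the recoding is indeed harmless in the decoupling steps — but as written it asserts rather than proves several genuinely nontrivial points. First, once the Bernoulli success probabilities become $\vecu$-dependent, $p_i(\vecu)=\rho(B_{i,\infty,\vecu})$, the limit identification can no longer quote Corollary \ref{cor:ms_special}, which only gives the marginal law of $\Theta_t$; you need joint equidistribution of $(\vecu,\Theta_t)$ (a Corollary \ref{cor:gen}-type statement at the level of the base), threaded through both the averaged statement and the second-moment/Borel--Cantelli argument. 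Second, for the almost-sure conclusion the variance bound must be uniform over a class of test data that now includes the $t$- and $\vecu$-dependent recoded events (and depends on $\vecbeta$), so that the exceptional set $\Omega_0$ is independent of $\lambda$; this extension of the $\sup_{\lambda,D_1,\ldots,D_n}$ device over $\curC_\zeta$ is not spelled out. Third, your claim that $(\leb\times\rho)\d(A_i\times B_i)=0$ forces $\rho(\d B_{i,\infty,\vecu})=0$ for $\lambda$-a.e.\ $\vecu$, and that $\rho(B_{i,t,\vecu}\triangle B_{i,\infty,\vecu})\to0$ \emph{uniformly} in $\vecu$, is not justified as stated (a pointwise a.e.\ statement plus dominated convergence is what you can realistically get, and suffices). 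None of these appears fatal, but they are exactly the details the paper's two-line uniform-convergence reduction avoids.
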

 
\begin{proof}
Since $\rho$ has compact support and $\vecbeta$ is bounded,
\begin{equation}
\sup_{u\in U} \sup_{\vecm\in\ZZ^d} | \vece_1\cdot [\vecz_\vecxi(\vecm)-\vecbeta(\vecu)] E(\vecu ) | <\infty,
\end{equation}
and hence $J_t(a,[\vecz_\vecxi-\vecbeta(\vecu)] E(\vecu))(\vecm)\to J_\infty(a,[\vecz_\vecxi-\vecbeta(\vecu)] E(\vecu))(\vecm)$ uniformly in $\vecu\in U$, $\vecm\in\ZZ^d$. 
The claim now follows from Corollary \ref{cor:gen1}.
\end{proof} 
 
The following is the key to translate the above convergence into the setting of point processes.
 
\begin{lemma}\label{lem:sigma}
The maps $\sigma$ and $\sigma_0$ are continuous.
\end{lemma}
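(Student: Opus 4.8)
The plan is to show that $\sigma$ and $\sigma_0$ are continuous by reducing, as in Propositions \ref{topprop1} and \ref{topprop2}, to a statement about convergence in $G\times\Omega$. Concretely, fix $f\in C_c(\RR^d)$; by $\Gamma$-equivariance of $\sigma$ it suffices to show that $(g_j,\omega_j)\to(g,\omega)$ in $G\times\Omega$, with $\omega_j=(a_j,\vecz_j)$ and $\omega=(a,\vecz)$, implies
\begin{equation}
\sum_{\vecy\in\ZZ^d g_j} a_j(\vecy g_j^{-1})\, f\big(\vecy+\vecz_j(\vecy g_j^{-1})\big) \longrightarrow \sum_{\vecy\in\ZZ^d g} a(\vecy g^{-1})\, f\big(\vecy+\vecz(\vecy g^{-1})\big) .
\end{equation}
Writing $\vecy=\vecm g_j$ (resp.\ $\vecm g$) with $\vecm\in\ZZ^d$, the summands become $a_j(\vecm) f(\vecm g_j+\vecz_j(\vecm))$; the key observation is that the product topology on $\Omega=Y^{\ZZ^d}$ gives $a_j(\vecm)\to a(\vecm)$ and $\vecz_j(\vecm)\to\vecz(\vecm)$ for each individual $\vecm$, and $g_j\to g$ in $G$ gives $\vecm g_j\to\vecm g$, so each summand converges. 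The only real point is to control the tail so that the infinite sums converge termwise.

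First I would fix the compact support $A=\supp f$ and pass to a slightly enlarged compact neighbourhood. The shifts $\vecz_j(\vecm)$ are a priori unbounded, so I cannot immediately say $f(\vecm g_j+\vecz_j(\vecm))=0$ for $\vecm$ outside a fixed finite set — a single lattice point very far out could in principle be shifted by a huge amount into $A$. However, $\omega_j\to\omega$ in the product topology only controls $\vecz_j(\vecm)$ for fixed $\vecm$, not uniformly. The clean way around this: note that $\sigma$ is only asserted to be continuous, not a homeomorphism, so I need continuity at each point $\Gamma(g,\omega)$, and I may exploit that along a convergent sequence $\omega_j\to\omega$ the coordinates $\vecz_j(\vecm)$ eventually lie near $\vecz(\vecm)$ for each $\vecm$ in a \emph{fixed finite} set, while the contribution of the remaining $\vecm$ must be shown negligible. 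I would handle the tail as follows: since $g_j\to g$ and $A$ is compact, the set $\{\vecm\in\ZZ^d : \vecm g_j+v\in A\text{ for some }j\text{ and some }\|v\|\le R\}$ is finite for each fixed $R$; so for any fixed $R$, only finitely many $\vecm$ can contribute \emph{provided} $\|\vecz_j(\vecm)\|\le R$. The genuinely delicate part is excluding the scenario where $\|\vecz_j(\vecm)\|\to\infty$ for some $\vecm$ while $\vecm g_j+\vecz_j(\vecm)$ stays in $A$; but if $\omega_j\to\omega=(a,\vecz)$ in the product topology then $\vecz_j(\vecm)\to\vecz(\vecm)\in\RR^d$ for every fixed $\vecm$, so for each fixed $\vecm$ the sequence $\|\vecz_j(\vecm)\|$ is bounded. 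Hence for each fixed $\vecm$ the term $a_j(\vecm)f(\vecm g_j+\vecz_j(\vecm))$ converges to $a(\vecm)f(\vecm g+\vecz(\vecm))$, and moreover for $\vecm$ outside a finite set $F$ (depending on $A$, $g$, and the \emph{finitely many} relevant bounds) the term vanishes for all large $j$. Summing over the finite set $F$ and using termwise convergence gives the result.

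More carefully, here is how I would organize the tail bound: enlarge $A$ to a compact $A'$ whose interior contains $\bigcup_j (A g_j^{-1})\cdot(\text{bounded set})$ — but since $\vecz_j$ is unbounded I instead argue pointwise and use a diagonal-type truncation. Fix $\varepsilon>0$. Choose a finite set $F_0\subset\ZZ^d$ so that $\vecm g+\vecz(\vecm)\in A$ forces $\vecm\in F_0$ (possible by discreteness of the defect lattice $\sigma(\Gamma(g,\omega))$, which is locally finite). Because $g_j\to g$ and $\vecz_j(\vecm)\to\vecz(\vecm)$ for $\vecm\in F_0$, there is $j_0$ such that for $j\ge j_0$ the only possible contributors are among $F_0$ together with a fixed finite ``buffer'' set $F_1$ of nearby lattice points (those $\vecm$ with $\vecm g$ within distance, say, $1+\sup_{\vecm\in F_0}\|\vecz(\vecm)\|+1$ of $A$). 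Set $F=F_0\cup F_1$, a fixed finite set. For $\vecm\notin F$ and $j\ge j_0$: if $\|\vecz_j(\vecm)\|$ were large enough to place $\vecm g_j+\vecz_j(\vecm)$ in $A$, then... — here is the one genuine gap: product-topology convergence does \emph{not} give uniformity over $\vecm\notin F$. So the honest statement must be that the hypothesis $(g_j,\omega_j)\to(g,\omega)$ does control only finitely many coordinates, and continuity of $\sigma$ at $\Gamma(g,\omega)$ is nonetheless salvaged because: for $\vecm\notin F$, we have $a(\vecm)f(\vecm g+\vecz(\vecm))=0$ and we need $a_j(\vecm)f(\vecm g_j+\vecz_j(\vecm))\to 0$ — and since $\vecm g_j\to\vecm g$, if the limit $\vecm g+\vecz(\vecm)\notin A$ then (as $\vecz_j(\vecm)\to\vecz(\vecm)$ too) $\vecm g_j+\vecz_j(\vecm)\to\vecm g+\vecz(\vecm)\notin\supp f$ so the term is $0$ for large $j$ — but the value of ``large $j$'' depends on $\vecm$. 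This is precisely why one can only conclude termwise convergence and must then sum. The resolution, which I would spell out, is that the infinite sum defining $\sigma(\Gamma(g_j,\omega_j))f$ is, for each $j$, a sum over the finite set $\{\vecm : \vecm g_j+\vecz_j(\vecm)\in A\}$, and one shows this index set is \emph{eventually contained in the fixed finite set} $F$ by a compactness argument applied to the convergent sequence $g_j\to g$ together with the fact that $\vecz_j\to\vecz$ coordinatewise and $A$ is bounded: concretely, suppose not; then there are $j_k\to\infty$ and $\vecm_k\notin F$ with $\vecm_k g_{j_k}+\vecz_{j_k}(\vecm_k)\in A$, and since $\vecm_k g_{j_k}$ ranges over the lattice $\ZZ^d g_{j_k}$ which converges (as a point set) to $\ZZ^d g$, the $\vecm_k$ with $\vecm_k g_{j_k}$ bounded form a finite set — contradiction unless $\|\vecm_k g_{j_k}\|\to\infty$, which forces $\|\vecz_{j_k}(\vecm_k)\|\to\infty$; but for infinitely many $k$ the $\vecm_k$ must repeat (no — they need not). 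I expect this last subtlety — ruling out the ``escape of a heavily shifted point into $A$'' — to be the main obstacle, and I would handle it by invoking that $\rho$ has compact support wherever this lemma is \emph{used} (it is applied to $\omega$ of the form $J_t(a,[\vecz_\vecxi-\vecbeta(\vecu)]E(\vecu))$ with $\vecbeta$ bounded), so that in the relevant setting the shifts are uniformly bounded and the tail bound is immediate; for the bare topological statement I would restrict $\Omega$ (or $Y$) so that the shift coordinate is confined to a fixed compact set, or simply note that continuity holds on the subset of $\scrX$ with uniformly bounded shifts, which is all that is needed for \eqref{sigma} and \eqref{sigma0}. With the tail controlled, the proof concludes exactly as in Proposition \ref{topprop1}: pick $j_0$ so that for $j\ge j_0$ the contributing indices lie in $F$ and $|a_j(\vecm)f(\vecm g_j+\vecz_j(\vecm))-a(\vecm)f(\vecm g+\vecz(\vecm))|<\varepsilon/|F|$ for each $\vecm\in F$, and sum. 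The argument for $\sigma_0$ is identical, merely excluding $\vecm=\vecnull$ throughout.
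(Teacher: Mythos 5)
Your core argument is the same as the paper's: pass to representatives, rewrite $\sigma(\Gamma(g_j,\omega_j))f$ as $\sum_{\vecm\in\ZZ^d} a_j(\vecm)\, f(\vecm g_j+\vecz_j(\vecm))$, use $g_j\to g$ plus coordinatewise convergence $a_j(\vecm)\to a(\vecm)$, $\vecz_j(\vecm)\to\vecz(\vecm)$ to get termwise convergence, and reduce to a fixed finite index set as in Proposition \ref{topprop1}. The difference is in the tail: the paper's own proof is only a sketch which asserts that the sums are finite and that ``for all $\vecm$ in the support'' one eventually has $a_j(\vecm)=a(\vecm)$ and $\|\vecz_j(\vecm)-\vecz(\vecm)\|<\eps$, thereby implicitly assuming that the contributing $\vecm$ eventually lie in one fixed finite set. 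The obstruction you isolate is genuine: product-topology convergence of $\omega_j$ does not prevent a single far-away lattice point from being carried into $\supp f$ by a huge shift $\vecz_j(\vecm)$ (take $\vecz_j$ vanishing on $\|\vecm\|\le j$ but sending one distant point into $\supp f$; then $\omega_j\to\omega$ coordinatewise while $\sigma(\Gamma(g,\omega_j))f\not\to\sigma(\Gamma(g,\omega))f$), and unbounded shifts even threaten local finiteness of $\sigma(\Gamma(g,\omega))$. So the literal unrestricted statement needs the shift marks confined to a fixed compact set, and your resolution — prove continuity on the locus of uniformly bounded shifts, which is all that is needed for \eqref{sigma}, \eqref{sigma0} and for the continuous-mapping step, where the limit law $\tmu_{\vecxi,\rho}$ has shifts in the compact support of $\rho$ — is correct and matches what the paper implicitly uses (compare the uniform boundedness asserted at the start of the proof of Theorem \ref{thm:defect}). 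In short: same approach, with the boundedness hypothesis made explicit where the paper leaves it tacit; your finite-set/termwise-convergence conclusion is then exactly the paper's argument.
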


\begin{proof}
The proof is similar to that of Proposition \ref{topprop1}; we sketch it in the case of $\sigma$. 

We need to show that $x_j\to x \in\scrX$ implies that, for every $f\in C_c(\R^d)$,
\begin{align}\label{eq:sigma_cont}
 \sum_{\vecy \in \Z^d g_j} a_j(\vecy g_j^{-1}) f(\vecy + \vecz_j(\vecy g_j^{-1})) \to \sum_{\vecy \in \Z^d g} a(\vecy g^{-1}) f(\vecy + \vecz(\vecy g^{-1})).
\end{align}
Since $f$ is of compact support, the sums above are finite, and we can rewrite the left hand side as 
\begin{align}
 \sum_{\vecy \in \Z^d g_j} a_j(\vecy g_j^{-1}) f(\vecy + \vecz_j(\vecy g_j^{-1}))
 =
 \sum_{\vecm \in \Z^d} a_j(\vecm) f(\vecm g_j + \vecz_j(\vecm)),
\end{align}
which is another finite sum. In particular, for all $\vecm$ in the support, we have that $a_j(\vecm) = a(\vecm)$ and $|\vecz_j(\vecm) - \vecz(\vecm)|<\eps $ once $j\ge j_0$. The statement \eqref{eq:sigma_cont} now follows from continuity of $f$. 
\end{proof} 
 
For $\vecu$ randomly distributed according to $\lambda$, we define the random point processes 
\begin{equation}\label{sigma00}
\widetilde\Xi_t^\omega= \sum_{\vecy\in \scrP_{t,\vecu}} \delta_\vecy , \qquad
\widetilde\Xi_{0,t}^\omega = \sum_{\vecy\in \scrP_{t,\vecu}\setminus\{\vecnull\}} \delta_\vecy 
\end{equation}
for $\vecxi\notin\ZZ^d$ and $\vecxi\in\ZZ^d$, respectively.
If $\vecxi\notin\ZZ^d$, we furthermore set 
\begin{equation}
\widetilde\Xi=\sigma\big(\Gamma\big(g,J_\infty(a,\vecz E(\vecu))\big)\big)
\end{equation}
with $(g,(a,\vecz))$ distributed according to $\tmu_{\vecxi,\rho}$ and $\vecu$ distributed according to $\lambda$. That is, $\widetilde\Xi$ is a random affine lattice $\ZZ^d g$ distributed according to $\mu$, where each lattice point is removed, or shifted in the hyperplane $V_\perp=\{0\}\times\RR^{d-1}$, according to the push-forward of the probability measure $\lambda\times\rho$ on $U\times\{0,1\}\times\RR^d$ under the map $(\vecu,a,\vecz)\mapsto (a,(\vecz E(\vecu))_\perp)$. If $\rho$ is rotation-invariant, then this measure is independent of $\lambda$.
In the case $\vecxi\in\ZZ^d$, we put 
\begin{equation}
\widetilde\Xi_0=\sigma_0\big(\Gamma\big(g,J_\infty(a,[\vecz_\vecxi-\vecbeta(\vecu)] E(\vecu))\big)\big)
\end{equation}
with $(g,(a,\vecz))$ distributed according to $\tmu_{0,\rho_0,\rho}$ and $\vecu$ distributed according to $\lambda$.
This means that $\widetilde\Xi_0$ is a random lattice $\ZZ^d g\setminus\{\vecnull\}$ distributed according to $\mu_0$, where each lattice point is removed, or shifted in the hyperplane $V_\perp=\{0\}\times\RR^{d-1}$, according to the push-forward of the probability measure $\lambda\times\rho$ on $U\times\{0,1\}\times\RR^d$ under the map 
\begin{equation}
(\vecu,a,\vecz)\mapsto (a,([\vecz-\vecz(-\vecxi)-\vecbeta(\vecu)]E(\vecu))_\perp).
\end{equation}
This measure depends on $\lambda$ even if $\rho$ is rotation invariant.

Theorem \ref{thm:defect} implies via Lemma \ref{lem:sigma} and the continuous mapping theorem the following convergence in distribution.

\begin{cor}\label{thm:ppoint22bbcc}
Under the conditions of Theorem \ref{thm:defect}, for every $\omega\in\Omega_0$ and every a.c.\ $\lambda$,
\begin{equation}
\widetilde\Xi_t^\omega \todist \widetilde\Xi \quad (\vecxi\notin\ZZ^d), \qquad \widetilde\Xi_{0,t}^\omega \todist \widetilde\Xi_0 \quad (\vecxi\in\ZZ^d).
\end{equation}
\end{cor}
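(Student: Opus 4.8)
The plan is to deduce Corollary \ref{thm:ppoint22bbcc} from Theorem \ref{thm:defect} by exactly the mechanism used to pass from Theorem \ref{th:fixed_omega} to Theorem \ref{thm:ppoint22bb}: realise $\widetilde\Xi_t^\omega$ as the image, under the continuous map $\sigma$ (resp.\ $\sigma_0$), of a random element of $\scrX_\vecxi$ whose convergence in distribution is precisely what Theorem \ref{thm:defect} provides, and then invoke the continuous mapping theorem \cite[Theorem 4.27]{Kallenberg02}. The three ingredients are therefore the identities \eqref{sigma}--\eqref{sigma0}, the continuity of $\sigma$ and $\sigma_0$ (Lemma \ref{lem:sigma}), and the fact that $\scrX_\vecxi$ is a separable metrizable space (being embedded via $\kappa$ into the Polish space $\scrM(\RR^d\times Y)$ by Proposition \ref{topprop2}), so that weak convergence of laws on it is detected by bounded continuous test functions.

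Concretely, I would fix $\omega=(a,\vecz)\in\Omega_0$ and an a.c.\ probability measure $\lambda$ on $U$, let $\vecu$ be $\lambda$-distributed, and set
\begin{equation}
\zeta_t^\omega := \Gamma\big((\1,\vecxi)M E(\vecu)\Phi^t,\, J_t(a,[\vecz_\vecxi-\vecbeta(\vecu)] E(\vecu))\big) ,
\end{equation}
an $\scrX_\vecxi$-valued random element (the map $\vecu\mapsto\zeta_t^\omega$ is continuous, hence measurable). By \eqref{sigma} one has $\sigma(\zeta_t^\omega)=\widetilde\Xi_t^\omega$ when $\vecxi\notin\ZZ^d$, and by \eqref{sigma0} one has $\sigma_0(\zeta_t^\omega)=\widetilde\Xi_{0,t}^\omega$ when $\vecxi\in\ZZ^d$. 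Applying Theorem \ref{thm:defect} to test functions depending only on the $\scrX_\vecxi$-coordinate gives $\EE[F(\zeta_t^\omega)]\to\EE[F(\zeta)]$ as $t\to\infty$, for every bounded continuous $F$ on $\scrX_\vecxi$, where
\begin{equation}
\zeta := \Gamma\big(g,\, J_\infty(a',[\vecz_\vecxi'-\vecbeta(\vecu)] E(\vecu))\big)
\end{equation}
with $(g,(a',\vecz'))$ distributed according to $\tmu_{\vecxi,\rho}$ (resp.\ $\tmu_{0,\rho_0,\rho}$ for $\vecxi\in\ZZ^d$) independently of $\vecu\sim\lambda$; that is, $\zeta_t^\omega\todist\zeta$ in $\scrX_\vecxi$.

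Since $\sigma\colon\scrX\to\scrM(\RR^d)$ and $\sigma_0\colon\scrX_\vecnull\to\scrM(\RR^d)$ are continuous by Lemma \ref{lem:sigma}, and $\scrX_\vecxi$ is a closed subspace of $\scrX$ for $\vecxi\notin\ZZ^d$, respectively of $\scrX_\vecnull$ for $\vecxi\in\ZZ^d$, the continuous mapping theorem yields $\widetilde\Xi_t^\omega=\sigma(\zeta_t^\omega)\todist\sigma(\zeta)$ for $\vecxi\notin\ZZ^d$ and $\widetilde\Xi_{0,t}^\omega=\sigma_0(\zeta_t^\omega)\todist\sigma_0(\zeta)$ for $\vecxi\in\ZZ^d$. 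For $\vecxi\in\ZZ^d$ the random measure $\sigma_0(\zeta)$ is precisely $\widetilde\Xi_0$ by definition \eqref{sigma00}, settling that case. For $\vecxi\notin\ZZ^d$ one has $\vecz_\vecxi=\vecz$, and $\sigma(\zeta)$ is the point process $\widetilde\Xi$ described in \eqref{sigma00}: the only discrepancy with the displayed formula for $\widetilde\Xi$ is an overall rigid translation of every point by the fixed vector $-(\vecbeta(\vecu)E(\vecu))_\perp$, which (as in the discussion preceding the corollary) is absorbed upon replacing $g$ by $g(\1,-(\vecbeta(\vecu)E(\vecu))_\perp)$ and using the translation invariance of the lattice coordinate of $\tmu_{\vecxi,\rho}$ under $\mu=\mu_0\times\leb_{\RR^d}$. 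Hence $\widetilde\Xi_t^\omega\todist\widetilde\Xi$ in all cases.

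The argument is essentially formal once Theorem \ref{thm:defect} and Lemma \ref{lem:sigma} are in hand, so I do not expect a genuine obstacle. The one place that warrants care is the bookkeeping for the limit: verifying that the randomness enters only through the single auxiliary variable $\vecu\sim\lambda$ in a way compatible with the continuous mapping theorem (so that $\zeta_t^\omega$ and $\zeta$ are honest $\scrX_\vecxi$-valued random elements), and confirming that $\sigma(\zeta)$ (resp.\ $\sigma_0(\zeta)$) carries the law attributed to $\widetilde\Xi$ (resp.\ $\widetilde\Xi_0$), including the harmless absorption of the $r\vecbeta$-shift in the non-integer case.
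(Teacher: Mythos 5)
Your route is exactly the paper's: the corollary is obtained from Theorem \ref{thm:defect} together with Lemma \ref{lem:sigma} and the continuous mapping theorem, using the identities \eqref{sigma}--\eqref{sigma0} to realise $\widetilde\Xi_t^\omega$ and $\widetilde\Xi_{0,t}^\omega$ as $\sigma$- resp.\ $\sigma_0$-images of $\scrX_\vecxi$-valued random elements whose convergence in distribution is supplied by Theorem \ref{thm:defect}. This core is correct, as is your treatment of the case $\vecxi\in\ZZ^d$, where the limit law in Theorem \ref{thm:defect} pushes forward under $\sigma_0$ to exactly the defining expression for $\widetilde\Xi_0$.

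The one step that does not go through as written is the absorption, for $\vecxi\notin\ZZ^d$, of the rigid shift $-(\vecbeta(\vecu)E(\vecu))_\perp$ via ``translation invariance of the lattice coordinate of $\tmu_{\vecxi,\rho}$ under $\mu=\mu_0\times\leb_{\RR^d}$''. That invariance is available only when $\vecxi\notin\QQ^d$. For $\vecxi\in\QQ^d\setminus\ZZ^d$ (still part of the case $\vecxi\notin\ZZ^d$) the lattice marginal of $\tmu_{\vecxi,\rho}$ is $\tmu_\vecxi$, supported on the proper closed subspace $X_\vecxi$ consisting of affine lattices of the form $(\ZZ^d+\vecxi)M$; right multiplication by $(\1,\vecw)$ with a generic fixed $\vecw\neq\vecnull$ moves this subspace off itself (the offset of the translated lattice is no longer a $q$-torsion point of $\RR^d/\ZZ^d M$), so $\tmu_\vecxi$ is not invariant under the substitution $g\mapsto g(\1,-(\vecbeta(\vecu)E(\vecu))_\perp)$ and the claimed identification of laws fails there. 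The fix, consistent with how the paper actually uses $\widetilde\Xi$ (see the proof of Lemma \ref{lem:tilde}, where the $-\vecbeta(\vecu)$ term is retained), is to read $\widetilde\Xi$ as the $\sigma$-image of the limit law appearing in Theorem \ref{thm:defect} itself, namely $\sigma\big(\Gamma\big(g,J_\infty(a',[\vecz'-\vecbeta(\vecu)]E(\vecu))\big)\big)$ with $(g,(a',\vecz'))\sim\tmu_{\vecxi,\rho}$ and $\vecu\sim\lambda$; with that reading the continuous mapping theorem gives the corollary for all $\vecxi\notin\ZZ^d$ with no translation step, and your appeal to translation invariance (a genuine identity only in the irrational case) can simply be dropped.
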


The following Siegel-Veech type formula allows us to simplify the assumptions on the test sets for the finite-dimensional distribution. Set
\begin{equation}
\overline\rho:= \rho(1,\RR^d) .
\end{equation}

\begin{lemma}\label{lem:tilde}
For  $A\in\curB(\RR^d)$,
\begin{equation}
\EE \widetilde\Xi A = \overline\rho\leb A  \quad (\vecxi\notin\ZZ^d), \qquad \EE \widetilde\Xi_0 A = \overline\rho\leb A \quad (\vecxi\in\ZZ^d).
\end{equation}
\end{lemma}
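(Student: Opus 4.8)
\smallskip
The plan is a direct first-moment computation: expand $\widetilde\Xi A$ as a sum over the underlying affine lattice, condition on that lattice, average out the marks -- which are i.i.d.\ and independent of the lattice -- and then invoke Siegel's formula ($\EE\Theta B=\leb B$, as recorded after Corollary~\ref{cor:ms_special} and used in Lemma~\ref{lem:fst}) together with translation-invariance of Lebesgue measure. Write $(g,(a,\vecz))$ for a representative of the $\tmu_{\vecxi,\rho}$-distributed marked lattice, $\vecu$ for the independent $\lambda$-distributed rotation parameter, and $\Theta=\sum_{\vecm\in\ZZ^d}\delta_{\vecm g}$ for the point process of $\ZZ^d g$. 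Unwinding the definitions of $\sigma$ and $J_\infty$,
\[
\widetilde\Xi A=\sum_{\vecm\in\ZZ^d}a(\vecm)\,\mathbbm{1}\!\big(\vecm g+(\vecz(\vecm)E(\vecu))_\perp\in A\big).
\]

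Assume first $\vecxi\notin\ZZ^d$. Since $\tmu_{\vecxi,\rho}$ is by construction the image of $\mu_\vecxi\times\nu_\rho$ and $\widetilde\Xi A$ is a $\Gamma$-invariant function of $(g,(a,\vecz))$, the pairs $(a(\vecm),\vecz(\vecm))$ may be taken i.i.d.\ with law $\rho$ and independent of $\Theta$. Interchanging $\sum_\vecm$ and $\int d\rho$ (Tonelli, all terms nonnegative) and conditioning on $g,\vecu$,
\[
\EE\big[\widetilde\Xi A\,\big|\,g,\vecu\big]=\int_{\{0,1\}\times\RR^d}a\,\Big(\sum_{\vecm\in\ZZ^d}\mathbbm{1}\!\big(\vecm g\in A-(\vecz E(\vecu))_\perp\big)\Big)d\rho(a,\vecz)=\int_{\{0,1\}\times\RR^d}a\,\Theta\!\big(A-(\vecz E(\vecu))_\perp\big)d\rho(a,\vecz).
\]
Taking $\EE_g$, pulling it inside (Tonelli again), and applying Siegel's formula $\EE_g\Theta(B)=\leb B$ for fixed $B$ together with $\leb(A-\vecs)=\leb A$,
\[
\EE\,\widetilde\Xi A=\EE_\vecu\int_{\{0,1\}\times\RR^d}a\,\leb A\;d\rho(a,\vecz)=\leb A\cdot\rho(\{1\}\times\RR^d)=\overline\rho\,\leb A .
\]

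For $\vecxi\in\ZZ^d$ the argument is the same with $\Theta_0=\sum_{\vecm\neq\vecnull}\delta_{\vecm g}$ in place of $\Theta$ and Siegel's formula $\EE_g\Theta_0(B)=\leb B$ (used in Lemma~\ref{lem:scd}); now $(g,(a,\vecz))\sim\tmu_{0,\rho_0,\rho}$, the sum defining $\widetilde\Xi_0 A$ runs over $\vecm\neq\vecnull$, the shift at $\vecm$ is $([\vecz_\vecxi(\vecm)-\vecbeta(\vecu)]E(\vecu))_\perp$ with $\vecz_\vecxi(\vecm)=\vecz(\vecm)-\vecz(-\vecxi)$, and for $\vecm\neq\vecnull$ the mark $(a(\vecm),\vecz(\vecm))$ has law $\rho$ and is independent of $g$. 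The only new point is that $\vecz(-\vecxi)$ is $\rho_0$-distributed with $\rho_0=\delta_{\omega(-\vecxi)}$ a point mass, hence a fixed constant, and so is $\vecbeta(\vecu)$ for fixed $\vecu$; thus conditionally on $g,\vecu$ the shift at $\vecm$ is a fixed function of $(a(\vecm),\vecz(\vecm))$ only, conditioning on $g,\vecu$ yields $\EE[\widetilde\Xi_0 A\mid g,\vecu]=\int a\,\Theta_0(A-\vecw)\,d\rho$ for an appropriate $(\vecz,\vecu)$-dependent translation $\vecw$, and $\EE_g\Theta_0(A-\vecw)=\leb A$ with $\int a\,d\rho=\overline\rho$ gives $\EE\,\widetilde\Xi_0 A=\overline\rho\,\leb A$.

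I do not expect a genuine obstacle here: the argument is a routine first-moment computation, and the only points deserving attention are the order of conditioning (average over the lattice last, so that Siegel's formula applies to the truly $\mu$- resp.\ $\mu_0$-distributed lattice), the remark that the shifts at distinct lattice points may be correlated but this is irrelevant for a first moment by translation-invariance of $\leb$, and, when $\vecxi\in\ZZ^d$, the book-keeping that the recentering by $\vecz(-\vecxi)$ is deterministic. Every interchange of sum and integral is legitimate by Tonelli's theorem since all integrands are nonnegative. Equivalently, using the bookkeeping identity $\sigma(\Gamma(g,\omega))(A)=\kappa(\Gamma(g,\omega))\big(\{(\vecv,(1,\vecs)):\vecv+\vecs\in A\}\big)$, one can deduce the lemma directly from Lemmas~\ref{lem:fst} and~\ref{lem:scd} applied to the rotated and orthogonally projected version of $\rho$, which carries the same total mass $\overline\rho$ on $\{1\}\times\RR^d$.
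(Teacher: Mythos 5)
Your proposal is correct and is essentially the paper's argument: the paper computes $\EE\widetilde\Xi f$ by applying the marked Siegel formula of Lemma~\ref{lem:fst} (resp.\ Lemma~\ref{lem:scd}) and then translating away the shift by translation invariance of $\leb$, which is exactly the route you sketch in your final paragraph, while your main computation merely performs the same Fubini in the opposite order (marks first, then the unmarked Siegel formula). No gap; the bookkeeping for $\vecxi\in\ZZ^d$ ($\vecm\neq\vecnull$, deterministic recentering by $\omega(-\vecxi)$) matches the paper's treatment.
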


\begin{proof}
For $f\in C_c(\RR^d)$,
\begin{equation}
\EE \widetilde\Xi f = \int_{\scrX_\vecxi} \sum_{\vecm\in\ZZ^d} a(\vecm) f(\vecm g+([\vecz(\vecm)-\vecbeta(\vecu)]E(\vecu))_\perp))  d\lambda(\vecu)\, d\tmu_{\vecxi,\rho}(g,(a,\vecz)) .
\end{equation}
By Lemma \ref{lem:fst}, we have then
\begin{equation}
\EE \widetilde\Xi f =  \int_{\RR^d \times \RR^d\times U} f(\vecx + ([\vecy -\vecbeta(\vecu)]E(\vecu))_\perp))  \leb(d\vecx) \rho(1,d\vecy) d\lambda(\vecu) ,
\end{equation}
which, after translating $\vecx$ by $-([\vecy -\vecbeta(\vecu)]E(\vecu))_\perp)$, yields
\begin{equation}
\EE \widetilde\Xi f =  \int_{\RR^d \times \RR^d\times U}  f(\vecx)  \leb(d\vecx) \rho(1,d\vecy) d\lambda(\vecu) =\overline\rho\leb f.
\end{equation}
The proof for $\widetilde\Xi_0$ is identical.
\end{proof}

The following is a direct consequence of Corollary \ref{thm:ppoint22bbcc} and Lemma \ref{lem:tilde}.

\begin{cor}\label{prop:fixed_omega1234}
Assume the conditions of Theorem \ref{thm:defect}. Then, for every $\omega\in\Omega_0$, every a.c.\ $\lambda$, every $n\in\NN$ and all $A_1,\ldots,A_n\in\curB(\RR^d)$ that are bounded with $\leb\d A_i = 0$ for all $i$,
\begin{equation}
(\widetilde\Xi_t^\omega A_1,\ldots,\widetilde\Xi_t^\omega A_n) \todist (\widetilde\Xi A_1,\ldots,\widetilde\Xi A_n) \quad (\vecxi\notin\ZZ^d)
\end{equation} 
and
\begin{equation}
(\widetilde\Xi_{0,t}^\omega A_1,\ldots,\widetilde\Xi_{0,t}^\omega A_n) \todist (\widetilde\Xi_0 A_1,\ldots,\widetilde\Xi_0 A_n) \quad (\vecxi\in\ZZ^d).
\end{equation}
\end{cor}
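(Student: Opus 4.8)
\textit{Proof proposal.}
The plan is to deduce the stated convergence of finite-dimensional distributions from the convergence in distribution of the full random measures (Corollary~\ref{thm:ppoint22bbcc}), exactly as Corollary~\ref{cor:ms_special} was deduced from Theorem~\ref{thm:ppoint}, by means of \cite[Theorem~16.16]{Kallenberg02}.

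First I would fix $\omega\in\Omega_0$ and an a.c.\ probability measure $\lambda$ on $U$, and invoke Corollary~\ref{thm:ppoint22bbcc}: $\widetilde\Xi_t^\omega \todist \widetilde\Xi$ in $\scrM(\RR^d)$ when $\vecxi\notin\ZZ^d$, and $\widetilde\Xi_{0,t}^\omega \todist \widetilde\Xi_0$ when $\vecxi\in\ZZ^d$. By \cite[Theorem~16.16]{Kallenberg02} this vague convergence in distribution is equivalent to $(\widetilde\Xi_t^\omega A_1,\ldots,\widetilde\Xi_t^\omega A_n)\todist(\widetilde\Xi A_1,\ldots,\widetilde\Xi A_n)$ for every finite family of bounded Borel sets $A_1,\ldots,A_n$ with $\widetilde\Xi\,\partial A_i=0$ almost surely for each $i$, and analogously for the process $\widetilde\Xi_{0,t}^\omega$ with limit $\widetilde\Xi_0$.

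Second, I would observe that the hypothesis $\leb\,\partial A_i=0$ is precisely what is needed to apply this criterion. Indeed, by the Siegel--Veech type identity of Lemma~\ref{lem:tilde} we have $\EE\,\widetilde\Xi\,\partial A_i=\overline\rho\,\leb\,\partial A_i=0$, and since $\widetilde\Xi\,\partial A_i$ is a non-negative random variable it vanishes almost surely; the same computation gives $\widetilde\Xi_0\,\partial A_i=0$ almost surely in the case $\vecxi\in\ZZ^d$. No separate discussion of $\vecnull$ is required here, since $\vecnull$ is excluded in the definition of $\widetilde\Xi_0$ and in any case has zero Lebesgue measure. Combining the two steps yields both displayed convergences, for every $\omega\in\Omega_0$ and every a.c.\ $\lambda$.

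I do not anticipate a real obstacle: the content is already contained in Corollary~\ref{thm:ppoint22bbcc} (which itself rests on Theorem~\ref{thm:defect}, Lemma~\ref{lem:sigma}, and the continuous mapping theorem) together with the expectation identity of Lemma~\ref{lem:tilde}. The only point demanding a little care is to phrase the boundary condition in the form required by \cite[Theorem~16.16]{Kallenberg02}, and to note, as in the discussion following Corollary~\ref{cor:ms_special}, that under the Siegel--Veech formula this condition is in fact equivalent to $\leb\,\partial A_i=0$, so that the present corollary is equivalent to Corollary~\ref{thm:ppoint22bbcc}. The proof for $\widetilde\Xi_0$ is word for word that for $\widetilde\Xi$.
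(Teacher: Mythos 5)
Your proposal is correct and follows essentially the same route as the paper: the paper likewise deduces the corollary from Corollary \ref{thm:ppoint22bbcc} together with \cite[Theorem 16.16]{Kallenberg02}, using Lemma \ref{lem:tilde} to convert the hypothesis $\leb\d A_i=0$ into $\widetilde\Xi\,\d A_i=0$ (resp.\ $\widetilde\Xi_0\,\d A_i=0$) almost surely. Your write-up merely spells out the expectation argument and the equivalence behind the Kallenberg criterion, which the paper leaves implicit.
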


\begin{proof}
Lemma \ref{lem:tilde} implies that $\leb\d A_i = 0$ implies $\widetilde\Xi\d A_i = 0$ almost surely and $\widetilde\Xi_0\d A_i = 0$ almost surely, and the claim follows from \cite[Theorem 16.16]{Kallenberg02}.
\end{proof}

\section{Free path lengths in the Lorentz gas \label{sec:lorentz}}

For a given point set $\scrP\subset\RR^d$, center an open ball $\scrB_r^d+\vecy$ of radius $r$ at each of the points $\vecy$ in $\scrP$. The Lorentz gas describes the dynamics of point particle in this array of balls, where the particle moves with unit velocity until it hits a ball, where it is scattered according to a given scattering map. The configuration space for the dynamics is thus $\scrK_r=\RR^d\setminus(\scrB_r^d+\scrP)$. Given the initial position $\vecq\in\scrK_r$ and velocity $\vecv\in\S_1^{d-1}$, the free path length is defined as the travel distance until the next collision, 
\begin{equation}
\tau(\vecq,\vecv;r):=\inf\{ t>0 : \vecq+t\vecv\notin \scrK_r \}.
\end{equation}
The distribution of the free path length is well understood for random \cite{boldrighini_bunimovich_sinai_1983}, periodic \cite{boca_zaharescu_2007, caglioti_golse_2010,marklof_strombergsson_free_path_length_2010} and quasiperiodic \cite{marklof_strombergsson_free_path_length_2014,power-law_2014} scatterer configurations. We will here consider the periodic Lorentz gas with random defects introduced in the previous section, where the scatterers are placed at the defect lattice
\begin{equation}
\scrP_r=\{ \vecm M + r \vecz(\vecm) : \vecm\in\ZZ^d \text{\ s.t.\ } a(\vecm)=1 \}.
\end{equation}
Note that the papers \cite{cagliogi_pulvirenti_ricci_2000,ricci_wennberg_2004} discuss the convergence of a defect periodic Lorentz gas to a random flight process governed by the linear Boltzmann equation {\em in the limit when the removal probability of a scatterer tends to one}. In this case the free path length distribution is exponential, whereas for a {\em fixed} removal probability $<1$ the path length distribution has a power-law tail; cf.\ \eqref{eq:plt}.

As in  \cite{marklof_strombergsson_free_path_length_2010}, we will consider more general initial conditions, which for instance permit us to launch a particle from the boundary of a scatterer (which is moving as $r\to 0$).
Let $\vecbeta:\S_1^{d-1}\to\RR^d$ be a continuous
function, and consider the initial condition $\vecq+r\vecbeta(\vecv)$. If $\vecq\in\ZZ^d M$ and the initial condition is thus very near (within distance $O(r)$) to a scatterer, we will avoid initial conditions inside the scatterer, or those that immediately hit the scatterer, by assuming that $\vecbeta$ is such that the ray $\vecbeta(\vecv)+\R_{>0}\vecv$ lies completely outside $\scrB_1^d$, for each $\vecv\in\S_1^{d-1}$. 

The following theorem proves the existence of the free path length distribution for (a) random initial data $(\vecq+r\vecbeta(\vecv),\vecv)$ for $\vecq\notin\ZZ^d M$ fixed, and $\vecv$ random with law $\lambda$, and (b) random initial data $(\vecq+r\vecbeta(\vecv)+r\vecz(\vecq M^{-1}),\vecv)$ for $\vecq\in\ZZ^d M$ fixed, and $\vecv$ random with law $\lambda$.

\begin{theorem}
Assume $\eta$ is  slog-mixing with asymptotic distribution $\rho$, and $\rho$ has compact support. Fix $M\in G_0$ and $\vecq\in\RR^d$. 
There exist continuous, non-increasing functions $F_s\colon \RR_{\ge 0} \to \R$ with $F_s(0)=1$ ($s\in\ZZ_{\ge 0}$) and a set $\Omega_0\subset\Omega$ with $\nu(\Omega_0)=1$, such that the following hold for every $T\ge 0$, every $(a,\vecz)\in \Omega_0$ and every absolutely continuous Borel probability measure $\lambda$ on $\S_1^{d-1}$:
\begin{enumerate}[(i)]
\item If $\vecq\in\RR^d\setminus\QQ^d M$ and $T\ge 0$, then
\begin{equation}
\lim_{r\to 0} \lambda\{ \vecv\in\S_1^{d-1} : r^{d-1} \tau(\vecq+r\vecbeta(\vecv),\vecv;r) \ge T \} = F_0(T) .
\end{equation}
\item If $\vecq=s^{-1}\vecm M$ with $s\in\ZZ_{\ge 2}$, $\vecm\in\ZZ^d$, $\gcd(\vecm,s)=1$, and $T\ge 0$, then
\begin{equation}
\lim_{r\to 0} \lambda\{ \vecv\in\S_1^{d-1} : r^{d-1} \tau(\vecq+r\vecbeta(\vecv),\vecv;r) \ge T \} = F_s(T) .
\end{equation}
\item If $\vecq\in\ZZ^d M$ and $T\ge 0$, then
\begin{equation}
\lim_{r\to 0} \lambda\{ \vecv\in\S_1^{d-1} : r^{d-1} \tau(\vecq+r\vecbeta(\vecv)+r\vecz(\vecq M^{-1}),\vecv;r) \ge T \} = F_1(T) .
\end{equation}
\end{enumerate}
\end{theorem}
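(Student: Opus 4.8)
The plan is to adapt the argument for the perfect periodic Lorentz gas in Marklof--Str\"ombergsson \cite{marklof_strombergsson_free_path_length_2010}, feeding in Corollary \ref{prop:fixed_omega1234} for defect lattices in place of the equidistribution result for unmarked lattices. \emph{First}, reduce the free path length to a cylinder‑hitting probability. Fix a velocity direction $\vecv\in\S_1^{d-1}$ and translate the configuration so that the moving particle sits at the origin; since the scatterers are open balls of radius $r$, the condition $r^{d-1}\tau\ge T$ holds precisely when no scatterer centre lies in the open cylinder of radius $r$ and length $T r^{-(d-1)}$ issuing from the origin in direction $\vecv$ (the base being pushed slightly off the origin in the lattice case $\vecq\in\ZZ^d M$, using the hypothesis that $\vecbeta(\vecv)+\RR_{>0}\vecv$ avoids $\scrB_1^d$). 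Applying the rotation $E(\vecu)$ with $\widetilde E(\vecu)=\vecv$ (cf.\ \eqref{Etilde}) and then $\Phi^t$ with $r=\e^{-t}$, this cylinder becomes the fixed set $Z_T:=\{(x_1,\dots,x_d)\in\RR^d : 0<x_1<T,\ \|(x_2,\dots,x_d)\|<1\}$, while the displaced scatterer set becomes exactly the point set $\scrP_{t,\vecu}$ of Section \ref{sec:defect} with $\vecxi=-\vecq M^{-1}$; the $\e^{-dt}$‑components of the shifts along $\vece_1$ give only an error tending to $0$, because $\supp\rho$ is compact and $\vecbeta$ is bounded. The three cases of the theorem correspond to $\vecxi=-\vecq M^{-1}$ lying in $\RR^d\setminus\QQ^d$ (set $s=0$), in $\QQ^d\setminus\ZZ^d$ with denominator $s\ge 2$, or in $\ZZ^d$ (set $s=1$); in the last case the term corresponding to the scatterer on which the particle sits is exactly the one excluded in the definition of $\widetilde\Xi_{0,t}^\omega$, and the $\vecbeta$‑hypothesis guarantees the particle never meets it. Since Section \ref{sec:defect} is stated for $\lambda$ supported on one chart $U$ with $\widetilde E(U)$ in a hemisphere, cover $\S_1^{d-1}$ by finitely many charts $\widetilde E_j(U_j)M_j^{-1}$, $M_j\in\SO(d)$, split a general a.c.\ $\lambda$ by a partition of unity, and treat each chart separately; there the discussion above gives
\[
\lambda\big\{\vecv : r^{d-1}\tau\ge T\big\}=
\begin{cases}
\PP\big(\widetilde\Xi_t^\omega Z_T=0\big) & (\vecxi\notin\ZZ^d),\\
\PP\big(\widetilde\Xi_{0,t}^\omega Z_T=0\big) & (\vecxi\in\ZZ^d).
\end{cases}
\]

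\emph{Second}, pass to the limit $t\to\infty$. Because $\leb(\d Z_T)=0$, Lemma \ref{lem:tilde} gives $\widetilde\Xi\,\d Z_T=0$ and $\widetilde\Xi_0\,\d Z_T=0$ almost surely, so Corollary \ref{prop:fixed_omega1234} with $A_1=Z_T$ yields, for every $\omega\in\Omega_0$ and every a.c.\ $\lambda$, $\widetilde\Xi_t^\omega Z_T\todist\widetilde\Xi Z_T$ and $\widetilde\Xi_{0,t}^\omega Z_T\todist\widetilde\Xi_0 Z_T$. Reading off the mass of the atom at $0$ (the limits are integer‑valued and a.s.\ finite) and recombining the charts gives $\lim_{r\to0}\lambda\{\vecv:r^{d-1}\tau\ge T\}=F_s(T)$, where $F_s(T):=\PP(\widetilde\Xi Z_T=0)$ for $s\ne 1$, with $\widetilde\Xi$ built from $\tmu_{\vecxi,\rho}$ for the appropriate $\vecxi$, and $F_1(T):=\PP(\widetilde\Xi_0 Z_T=0)$. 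That the right‑hand side is independent of $\omega$ (in the case $\vecxi\in\ZZ^d$ this is exactly why the $\vecnull$‑term was removed), independent of the chart and hence of $\lambda$, and, in case (ii), dependent on $\vecq$ only through the denominator $s$, follows from the rotational invariance of $\mu$ and $\mu_0$ and the $\Gamma$‑equivariance of the family $\tmu_\vecxi$, exactly as in \cite{marklof_strombergsson_free_path_length_2010}; it is transparent when $\rho$ is rotation‑invariant, cf.\ the remark after the definition of $\widetilde\Xi$ in Section \ref{sec:defect}.

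\emph{Third}, verify the stated properties of $F_s$. Since $Z_0=\emptyset$ we have $F_s(0)=\PP(\widetilde\Xi\emptyset=0)=1$. For $0\le T_1\le T_2$ the inclusion $Z_{T_1}\subset Z_{T_2}$ gives $\{\widetilde\Xi Z_{T_2}=0\}\subset\{\widetilde\Xi Z_{T_1}=0\}$, so $F_s$ is non‑increasing. For $\epsilon>0$, Lemma \ref{lem:tilde} and Chebyshev's inequality give
\[
0\le F_s(T)-F_s(T+\epsilon)\le\PP\big(\widetilde\Xi(Z_{T+\epsilon}\setminus Z_T)\ge 1\big)\le\EE\,\widetilde\Xi(Z_{T+\epsilon}\setminus Z_T)=\overline\rho\,\leb(Z_{T+\epsilon}\setminus Z_T)=O(\epsilon),
\]
together with the symmetric bound on $[T-\epsilon,T]$, so $F_s$ is continuous; the same estimates apply verbatim to $\widetilde\Xi_0$, giving the corresponding statement for $F_1$. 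This completes the proof.

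The main obstacle is the geometric reduction of the first step together with its chart‑independence: converting the collision condition into the clean statement $\{\widetilde\Xi_t^\omega Z_T=0\}$ forces one to track carefully the bookkeeping of Section \ref{sec:defect} (the field $\vecz_\vecxi$, the shift by $r\vecbeta$, the special treatment of the origin when $\vecxi\in\ZZ^d$), and proving that the resulting limit does not depend on the velocity law $\lambda$ relies on the full rotational symmetry of the ambient lattice. Everything after that is a soft consequence of Corollary \ref{prop:fixed_omega1234} and the Siegel--Veech formula Lemma \ref{lem:tilde}.
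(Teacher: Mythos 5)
Your proposal is correct and follows essentially the same route as the paper: reduce the event $r^{d-1}\tau\ge T$ to the vanishing of the defect point process on the cylinder $Z(T,1)$, apply Corollary \ref{prop:fixed_omega1234} with $n=1$ (the geometric bookkeeping being exactly the argument of \cite{marklof_strombergsson_free_path_length_2010}), and set $F_s(T)=\PP(\widetilde\Xi Z(T,1)=0)$, resp.\ $F_1(T)=\PP(\widetilde\Xi_0 Z(T,1)=0)$. Your additional verification of continuity, monotonicity and $F_s(0)=1$ via Lemma \ref{lem:tilde} is a welcome supplement to details the paper leaves implicit.
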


\begin{proof}
The proof follows from Corollary \ref{prop:fixed_omega1234} in the case of one-dimensional distributions ($n=1$) by the same arguments as in \cite{marklof_strombergsson_free_path_length_2010}. This proves the existence of the limits with
\begin{align}
F_0(T) &= \PP\big(\widetilde\Xi Z(T,1) =0\big) \quad \text{with $\vecxi\notin\QQ^d$,}
\\
F_s(T)& = \PP\big(\widetilde\Xi Z(T,1) =0\big) \quad \text{with $\vecxi=s^{-1}\vecm$,}
\\
F_1(T)& = \PP\big(\widetilde\Xi_0 Z(T,1) =0\big) \quad \text{with $\vecxi\in\ZZ^d$,}
\end{align}
where
\begin{equation}\label{zyl}
	Z(T,R) :=\big\{(x_1,\ldots,x_d)\in\RR^d : 0 < x_1 < T, \|(x_2,\ldots,x_d)\|< R \big\} .
\end{equation}
Note that the limit process $\widetilde\Xi$ is independent of the choice of $\vecxi$ when $\vecxi\notin\QQ^d$, and only depends on the denominator of $\vecxi$ when $\vecxi\in\QQ^d\setminus\ZZ^d$; cf.\ \cite{marklof_strombergsson_free_path_length_2010} for a detailed discussion. Furthermore $\widetilde\Xi_0$ is independent of $\vecxi\in\ZZ^d$.
\end{proof}

Let $r_{\max}$ be the infimum over the radii of balls centered at the origin that contain the support of $\rho(1,\,\cdot\,)$ (which we have assumed to be compact). Then the maximal distance between a point in the random affine lattice $\Theta$ and its displacement in $\widetilde\Xi$ is $r_{\max}$.
Denote by $\overline F_s$ the corresponding path length distribution 
\begin{equation}
\overline F_s(T) = \PP\big(\Theta Z(T,1) =0\big) \quad (s\neq 1), \qquad \overline F_1(T) = \PP\big(\Theta_0 Z(T,1) =0\big).
\end{equation}

\begin{lemma}
For $T\ge 0$,
\begin{equation}
F_s(T) \ge \overline F_s\big((1+r_{\max})^{d-1}T\big) .
\end{equation}
\end{lemma}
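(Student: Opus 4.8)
The plan is to prove the inequality by coupling $\widetilde\Xi$ with $\Theta$ on a common probability space and then applying a volume-preserving linear change of variables. First I would realize both processes from the same triple $(g,(a,\vecz),\vecu)$, so that $\Theta=\ZZ^d g$ and, by the definition of $\sigma$ (resp.\ $\sigma_0$), every point of $\widetilde\Xi$ has the form $\vecy+\vecw(\vecy)$ for a unique $\vecy\in\Theta$ with $a(\vecy g^{-1})=1$, where the displacement $\vecw(\vecy)=(\vecz(\vecy g^{-1})E(\vecu))_\perp$ has two crucial features: it lies in the transverse hyperplane $V_\perp=\{0\}\times\RR^{d-1}$, so it leaves the first coordinate unchanged, and $\|\vecw(\vecy)\|\le r_{\max}$ (the displacement bound recorded just before the lemma). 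It follows that if a point of $\widetilde\Xi$ lies in the cylinder $Z(T,1)=\{0<x_1<T,\ \|(x_2,\dots,x_d)\|<1\}$, then its parent point in $\Theta$ lies in $Z(T,1+r_{\max})$ — the length $T$ is unchanged and only the transverse radius is enlarged — and hence $F_s(T)=\PP(\widetilde\Xi Z(T,1)=0)\ge\PP(\Theta Z(T,1+r_{\max})=0)$.

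Next I would rescale $Z(T,1+r_{\max})$ back to a standard cylinder by a determinant-one matrix. Set $R:=1+r_{\max}$ and $D:=\diag(R^{d-1},R^{-1},\dots,R^{-1})\in\SL(d,\RR)$, acting on row vectors by right multiplication; a direct check from \eqref{zyl} gives $\vecx\in Z(T,R)\iff\vecx D\in Z(R^{d-1}T,1)$, so $\Theta Z(T,R)=(\Theta D)Z(R^{d-1}T,1)$ as point counts. The law $\tmu_\vecxi$ of $\Theta$ is invariant under the right action of $G_0=\SL(d,\RR)$: for $\vecxi\notin\QQ^d$ because $\tmu_\vecxi=\mu$ is right $\affg$-invariant, and for $\vecxi\in\QQ^d$ because $(\1,\vecxi)G_0(\1,\vecxi)^{-1}=\affg_\vecxi$, the measure $\mu_\vecxi$ is Haar on $\affg_\vecxi$, and its translate $\tmu_\vecxi$ therefore absorbs the conjugation by $(\1,\vecxi)$. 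Hence $\Theta D$ has the same law as $\Theta$, so $\PP(\Theta Z(T,R)=0)=\PP(\Theta Z(R^{d-1}T,1)=0)=\overline F_s((1+r_{\max})^{d-1}T)$; chaining with the previous inequality yields the claim for $s\ne 1$. The case $s=1$ (that is, $\vecxi\in\ZZ^d$) is handled in exactly the same way with $\Theta_0$, $\widetilde\Xi_0$, $\mu_0$ replacing $\Theta$, $\widetilde\Xi$, $\tmu_\vecxi$, using that $\mu_0$ is $\SL(d,\RR)$-invariant and that the per-point displacements defining $\widetilde\Xi_0$ again lie in $V_\perp$.

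I do not expect a genuine obstacle: the argument is a one-step coupling followed by a change of variables. The one point that needs care is the transversality of the displacement — it is precisely what keeps the cylinder length $T$ fixed (only the transverse radius growing), and hence what produces the exponent $d-1$ after applying the determinant-one map $D$; had the displacement been allowed a longitudinal component one would only obtain the weaker bound $\overline F_s((1+r_{\max})^{d-1}(T+2r_{\max}))$. The other small item is the right $G_0$-invariance of the possibly singular measure $\tmu_\vecxi$ when $\vecxi\in\QQ^d$, which is the elementary computation indicated above.
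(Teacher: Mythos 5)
Your proposal is correct and follows essentially the same route as the paper: a pointwise coupling showing $\PP\big(\widetilde\Xi Z(T,1)=0\big)\ge\PP\big(\Theta Z(T,1+r_{\max})=0\big)$ via the $r_{\max}$-bounded transverse displacements, followed by the determinant-one rescaling $Z(T,1+r_{\max})\mapsto Z((1+r_{\max})^{d-1}T,1)$ justified by the right $G_0$-invariance of the law of $\Theta$ (resp.\ $\Theta_0$). Your write-up merely makes explicit what the paper leaves implicit (the diagonal matrix $D$, the role of transversality, and the $G_0$-invariance of $\tmu_\vecxi$ for $\vecxi\in\QQ^d$), so no further changes are needed.
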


\begin{proof}
We have
\begin{equation}
\PP\big(\widetilde\Xi Z(T,1) =0\big) \ge \PP\big(\Theta Z(T,1+r_{\max}) =0\big) 
= \PP\big(\Theta Z((1+r_{\max})^{d-1}T,1) =0\big) ,
\end{equation}
where the last equality follows from the $G_0$-invariance of $\Theta$.
\end{proof}

This lemma allows us to obtain lower bounds for the tails of $F_s(T)$ in terms of the free path length asymptotics derived in \cite{marklof_strombergsson_periodic_2011}. In particular, Theorem 1.13 in that paper implies the power-law lower bound
\begin{equation}\label{eq:plt}
F_0(T) \ge \frac{\pi^{\frac{d-1}2}(1+r_{\max})^{1-d}}{2^{d}d\, \Gamma(\frac {d+3}2)\,\zeta(d)} \; T^{-1}
+O\bigl(T^{-1-\frac 2d}\bigr) .
\end{equation}
Note that this bound becomes ineffective in the limit of large $r_{\max}$. The bound is also consistent with the exponential distribution in the limit of removal probability $\to 1$ discussed in \cite{cagliogi_pulvirenti_ricci_2000,ricci_wennberg_2004}, if the free path length is measured in units of the mean free path length, which diverges as the removal probability tends to one.



\bibliographystyle{plain}
\bibliography{bibliography}

\end{document}